\tikzset{every picture/.style={line width=1pt}} %set default line width to 0.75pt
\newcommand\A{\mathcal{A}}
\newcommand\B{\mathcal{B}}
\newcommand\FB{\mathfrak{B}}
\newcommand\C{\mathbb{C}}
\newcommand\CC{\mathcal{C}}
\newcommand\BC{\mathbf{C}}
\newcommand\G{\mathcal{G}}
\renewcommand\H{\mathcal{H}}
\newcommand\FL{\mathfrak{L}}
\newcommand\M{\mathcal{M}}
\newcommand\CN{\mathcal{N}}
\newcommand\CR{\mathcal{R}}
\newcommand\R{\mathbb{R}}
\renewcommand\S{\mathcal{S}}
\newcommand\Z{\mathbb{Z}}
\newcommand\w{\omega}
\newcommand\vphi{\varphi}
\renewcommand\phi{\vphi}
\newcommand\id{\textnormal{id}}
\newcommand\sing{\textnormal{sing}}
\newcommand\reg{\textnormal{reg}}
\newcommand\spt{\textnormal{spt}}
\newcommand\dist{\textnormal{dist}}
\newcommand\graph{\textnormal{graph}}
\newcommand\ext{\mathrm{d}}
\newcommand\del{\partial}
\newcommand\vartan{\textnormal{VarTan}}
\newcommand{\res}{\mathbin{\hspace{0.1em}\vrule height 1.3ex depth 0pt width 0.13ex\vrule height 0.13ex depth 0pt width 1.0ex}} %Current/varifold restriction symbol
\newcommand{\weakly}{\rightharpoonup}
\renewcommand{\div}{\textnormal{div}}
\def\@tocline#1#2#3#4#5#6#7{\relax
  \ifnum #1>\c@tocdepth % then omit
  \else
    \par \addpenalty\@secpenalty\addvspace{#2}%
    \begingroup \hyphenpenalty\@M
    \@ifempty{#4}{%
      \@tempdima\csname r@tocindent\number#1\endcsname\relax
    }{%
      \@tempdima#4\relax
    }%
    \parindent\z@ \leftskip#3\relax \advance\leftskip\@tempdima\relax
    \rightskip\@pnumwidth plus4em \parfillskip-\@pnumwidth
    #5\leavevmode\hskip-\@tempdima
      \ifcase #1
       \or\or \hskip 1em \or \hskip 2em \else \hskip 3em \fi%
      #6\nobreak\relax
    \dotfill\hbox to\@pnumwidth{\@tocpagenum{#7}}\par
    \nobreak
    \endgroup
  \fi}
\newtheoremstyle{newtheoremstyle}% Name
{3pt}% Space above
{3pt}% Space below
{\itshape}% Body font
{\parindent}% Indent amount
{\bfseries}% Theorem head font
{.}% Punctuation after theorem head
{0.5em}% Space after theorem head, last part (below) is theorem head specification
{} 
\newtheoremstyle{newtheoremstyledefn}% Name
{3pt}% Space above
{3pt}% Space below
{}% Body font
{\parindent}% Indent amount
{\bfseries}% Theorem head font
{.}% Punctuation after theorem head
{0.5em}% Space after theorem head, last part (below) is theorem head specification
{}
\theoremstyle{newtheoremstyle}
\newtheorem{theorem}{Theorem}
\newtheorem*{theorem*}{Theorem}
\newtheorem{lemma}[theorem]{Lemma}
\newtheorem{thmx}{Theorem}
\theoremstyle{newtheoremstyledefn}
\newtheorem{defn}[theorem]{Definition}
\numberwithin{equation}{section} %Number within sections
\numberwithin{theorem}{section}
\begin{document}

\title{The Structure of Stable Codimension One Integral Varifolds near Classical Cones of Density $Q+1/2$}

\author{
	Paul Minter
}

\address{\textnormal{Princeton University (Fine Hall) and the Institute for Advanced Study (Fuld Hall), Princeton, New Jersey, USA, 08544}}
\email{pm6978@princeton.edu, pminter@ias.edu}

\begin{abstract}
For each positive integer $Q\in\Z_{\geq 2}$, we prove a multi-valued $C^{1,\alpha}$ regularity theorem for varifolds in the class $\S_Q$, i.e., stable codimension one stationary integral $n$-varifolds which have no classical singularities of vertex density $<Q$, which are sufficiently close to a stationary integral cone comprised of $2Q+1$ half-hyperplanes (counted with multiplicity) meeting along a common axis. Such a result furthers the understanding of the local structure about singularities in the (possibly branched) varifolds in $\S_Q$ achieved by the author and N.~Wickramasekera (\cite{minterwick}) and generalises the authors' previous work in the case $Q=2$ (\cite{minter-5-2}) to arbitrary $Q\in \Z_{\geq 2}$. One notable difference with previous works is that our methods do not need any a priori size restriction on the (density $Q$) branch set to rule out density gaps.
\end{abstract} 

\maketitle

\tableofcontents

\section{Introduction and Main Result}\label{sec:intro}

The understanding of branch points within stationary integral varifolds -- that is, singular points where at least one tangent cone is a plane of multiplicity $>1$ and on no neighbourhood of the singular point is the varifold a sum of smooth minimal submanifolds -- is one of the most elusive open problems in the regularity theory of the area functional. Even in the setting of area-minimising integral currents, it is still not known whether the tangent cone at a density $2$ branch point is unique. One is therefore naturally led to the problem of developing techniques which can be used to understand branch points arising within various classes of stationary integral varifolds. The first significant step in this direction was due to F.~Almgren (\cite{almgren}; see also \cite{DLSp1, DLSp2, DLSp3}), who introduced the frequency function (as well as the centre manifold) to prove that the branch set in an area-minimiser has codimension at least 2 within the support of the varifold; this bound is sharp (as illustrated by simple examples of complex analytic varieties such as $\{(z,w)\in \C\times\C: z^2=w^3\}$), however, no information regarding uniqueness of the tangent plane, let alone the local structure of the area-minimiser about a branch point, is provided. It should be noted that in codimension one, area-minimisers do not exhibit branch points due to their superposition as Caccioppoli sets (see \cite{simon-gmt}).

Recently, another large class of stationary integral varifolds has enjoyed significant developments in its regularity theory: this is the class of codimension one stationary integral $n$-varifolds $V$ (where $n\geq 2$ and, say, in $B^{n+1}_2(0)\subset\R^{n+1}$) which have stable regular part (in the sense of $(\S2)$ below). Originally, the work of R.~Schoen and L.~Simon (\cite{SS}) showed that for such varifolds if one assumes that the size of the singular set is sufficiently small, then branch points do not occur (and indeed a certain sheeting theorem holds whenever such a varifold is close in Hausdorff distance to a hyperplane) and the singular set has dimension at most $n-7$. This result was later extended by N.~Wickramasekera (\cite{wickstable}) under the significantly weaker assumption that the $\H^{n-1}$-measure of the singular set vanishes or, equivalently (as shown in \cite{wickstable}), that the varifold does not contain any so-called \textit{classical singularities}, i.e., singular points $X\in \spt\|V\|$ such that there is some $\rho>0$ such that $V\res B_\rho(X) = \sum_{i}|M_i|$, where this is a finite sum of (at least 3 distinct) $C^{1,\alpha}$ submanifolds-with-boundary $M_i$, for some $\alpha\in (0,1)$ such that $B_\rho(X)\cap\del M_i = B_\rho(X)\cap \del M_j$ for each $i,j$ and each pair $M_i,M_j$ is either everywhere disjoint (away for their common boundary) or $M_i = M_j$; we remark that in fact by the work of B.~Krummel (\cite{krummel-reg}), these $M_i$ can be taken to be $C^\infty$ submanifolds-with-boundary (or in fact $C^\omega$ as we are in $\R^{n+1}$ with the usual metric). In particular, Wickramasekera's regularity theory shows that any branch point in a stable codimension one stationary integral varifold must be a limit of classical singularities. As codimension one area-minimisers have stable regular part and do not exhibit classical singularities (which is readily seen by simple 1-dimensional comparison arguments), Wickramasekera's work demonstrates that the regularity theory of codimension one area-minimisers is actually a direct consequence of the regularity theory for codimension one stable integral varifolds.

Aside from Almgren's dimension bound for area-minimisers, all the results mentioned above in fact a posteriori \textit{rule out} branch points. Examples of branched stable codimension one stationary integral varifolds have been constructed (see \cite{simonwick-examples}, \cite{krummel-examples}, and \cite{rosales}), and thus given the results of \cite{wickstable}, this is a natural class to study in order to develop techniques for the understanding of branch points. One must therefore try to prove regularity results in the presence of classical singularities and branch points. Notable examples of rectifiability results for branch sets in this direction include the work of Krummel--Wickramasekera for two-valued stationary graphs (\cite{KW3}) and multi-valued functions locally minimising the Dirichlet energy (\cite{KW2}; see also \cite{DMSV}).

Recently, in \cite{minterwick} (which strengthens and generalises earlier work seen in \cite{wick08} and \cite{wickstable}), the author and Wickramasekera studied the class $\S_Q$ (here, $Q\in\Z_{\geq 2}$) of stable codimension one stationary integral varifolds which do not contain any classical singularities of (vertex) density $<Q$ (i.e. no classical singularities formed of $<2Q$ smooth submanifolds-with-boundary, counted with multiplicity) and showed that a multi-valued sheeting theorem holds locally about branch points of density $Q$; in particular, the tangent cone is unique at such points. More precisely, it was shown (see \cite[Theorem A]{minterwick} or Theorem \ref{thm:MW1} below) that if $V\in \S_Q$ is close, as varifolds, to a hyperplane of multiplicity $Q$, then $V$ is locally expressible as a Lipschitz graph $u$ which is in fact \textit{generalised}-$C^{1,\alpha}$ (for some $\alpha = \alpha(n,Q)$); here, the ``generalised'' refers to the fact that classical singularities (of density $Q$) need not immersed for $Q\geq 3$, yet nonetheless the individual sheets are separately $C^{1,\alpha}$ submanifolds-with-boundary (see Definition \ref{defn:GC1a} for a precise definition). 

Among the techniques used in the work \cite{minterwick} is a novel method for establishing monotonicity of Almgren's frequency function associated with (multi-valued) blow-ups of varifolds in $\S_{Q}$; the frequency function is then employed, as its primary use, to establish a fundamental regularity estimate for the blow-ups rather than for proving, as in the area minimising setting of \cite{almgren}, a dimension bound on the branch set (although such a bound can still be achieved). In contrast to the area-minimising setting, it is not clear a priori that blow-ups of varifolds in $\S_{Q}$ satisfy any variational property. In the area minimising case, the blow-ups are a priori shown to be locally Dirichlet energy minimising, which, via first variation arguments, directly leads to frequency monotonicity. The monotonicity of frequency in \cite{minterwick} must be achieved by other means, and indeed the argument proceeds by combining variational identities for the varifolds in $\S_{Q}$ about classical singularities with a new, elementary energy non-concentration estimate at flat singular points of the blow-ups.

Moreover, analogously to the area-minimising setting, it can be readily shown that (the varifold associated to) a codimension one area-minimising current mod $p$ belongs to the class $\S_{p/2}$, and thus such a result is applicable to understanding multiplicity $p/2$ branch points in mod $p$ minimisers (see \cite[Theorem E]{minterwick}). It should be noted that when $p$ is odd, from the work of B.~White (\cite{white-mod-p}) it was already known that branch points do not occur, and when $p$ is even the only branch points which can occur necessarily have density $p/2$ (and in fact one must have $p\geq 6$ from \cite{white-mod-4}). However, this work heavily relies on the area-minimising mod $p$ assumption.

The next natural question is whether it is possible to understand density $>Q$ branch points for varifolds in the class $\S_Q$. An intermediate step towards this question is to first understand singular points to $V\in \S_Q$ which have a tangent cone consisting of half-hyperplanes meeting along a common axis; we remark that by \cite{wickstable}, such a tangent cone necessarily has $\geq 2Q$ half-hyperplanes (always counted with multiplicity), and when the number of half-hyperplanes is exactly $2Q$ it follows that the singular point is in fact a classical singularity of density $Q$ (see \cite[Theorem C]{minterwick} or Theorem \ref{thm:MW3}). Thus, the next case of interest is when the tangent cone consists of $2Q+1$ half-hyperplanes meeting along a common boundary; such a situation is the topic of study for the current work. We remark that the situation of $\geq 2Q+2$ half-hyperplanes (and thus density $\geq Q+1$ branch points) is significantly different to the setting of $2Q+1$ half-hyperplanes due to the possibility of \textit{density gaps}, i.e., $V\in \S_Q$ can be close to a union of $2Q+2$ half-hyperplanes with common boundary but not itself have a $\H^{n-1}$-positive measure set of points of density $Q+1$ (we show, in Lemma \ref{lemma:gaps}, that for the class $\S_Q$ such density gaps do not occur in the case of $2Q+1$ half-hyperplanes with a common boundary). Dealing with density gaps when they occur is a significant obstacle for our current techniques (a notable example where this issue is present yet dealt with is \cite{beckerkahn}).

For the class $\S_2$, i.e. when $Q=2$, the corresponding regularity question at singular points having a tangent cone comprised of $5$ half-hyperplanes meeting along a common axis has been answered in \cite{minter-5-2}; however, compared with the current work, \cite{minter-5-2} uses significantly more structural information for $V\in \S_2$, namely that: (i) the dimension of the multiplicity 2 branch set is at most $n-2$; (ii) that density $2$ classical singularities are immersed; and (iii) that blow-ups of varifolds relative to a multiplicity two hyperplane are $C^{1,1/2}$ two-valued harmonic functions (see \cite[Theorem D]{minterwick} and also \cite{simonwick-frequency}).

In the present work, we will overcome these difficulties and see that in fact one does not need this extra information to establish a regularity result for $V\in \S_Q$ near singular points which have a tangent cone comprised of $2Q+1$ half-hyperplanes with a common boundary. We stress that we do \textit{not} need to assume a dimension bound on the (density $Q$) branch set of $V\in \S_Q$ for our argument. However, the iterative blow-up procedures seen in \cite{minter-5-2} are still necessary for our argument, although they are essentially the same as in \cite{minter-5-2} (once certain other properties have been established) and so we will not discuss them here. The main purpose of the present work is therefore to focus on proving these other properties in the general setting. One of these is to rule out density gaps, and the other is to prove a boundary regularity theorem for certain blow-up classes; for this latter point, we give an argument which allows one in certain circumstances to extend monotonicity of a frequency function at \textit{interior} points to monotonicity of a frequency function at \textit{boundary} points, which is then used to establish the boundary regularity of functions in the blow-up class.

Let us now recall the definition of the class $\S_Q$, as in \cite{minterwick}, and state our main result.

{\bf Definition:} Let $Q \in \{2,3,\dotsc\}.$ Let $\S_{Q}$ denote that class of integral $n$-varifolds $V$ on the open ball $B^{n+1}_2(0) \subset \R^{n+1}$ with $0 \in \spt\|V\|$, $\|V\|(B_{2}^{n+1}(0)) < \infty$ and which satisfy the following conditions: 
\begin{enumerate}
	\item [$(\S1)$\;\;\,] $V$ is stationary in $B^{n+1}_2(0)$ with respect to the area functional, in the following (usual) sense: for any given vector field $\psi\in C^1_c(B^{n+1}_2(0);\R^{n+1})$, $\epsilon>0$, and $C^2$ map $\phi:(-\epsilon,\epsilon)\times B^{n+1}_2(0)\to B^{n+1}_2(0)$ such that:
	\begin{enumerate}
		\item [(i)] $\phi(t,\cdot):B^{n+1}_2(0)\to B^{n+1}_2(0)$ is a $C^2$ diffeomorphism for each $t\in (-\epsilon,\epsilon)$ with $\phi(0,\cdot)$ equal to the identity map on $B^{n+1}_2(0)$,
		\item [(ii)] $\phi(t,x) = x$ for each $(t,x)\in (-\epsilon,\epsilon)\times\left(B^{n+1}_2(0)\setminus\spt(\psi)\right)$, and
		\item [(iii)] $\left.\del\phi(t,\cdot)/\del t\right|_{t=0} = \psi$,
	\end{enumerate}
	we have that
	$$\left.\frac{\ext }{\ext t}\right|_{t=0} \|\phi(t,\cdot)_\# V\|(B^{n+1}_2(0)) = 0;$$
	equivalently (see \cite[Section 39]{simon-gmt}),
	$$\int_{B^{n+1}_2(0)\times G_n}\div_S\psi(X)\ \ext V(X,S) = 0$$
	for every vector field $\psi\in C^1_c(B^{n+1}_2(0);\R^{n+1})$.
	\item [$(\S2)$\;\;\,] $\reg(V)$ is stable in $B^{n+1}_2(0)$ in the following (usual) sense: for each open ball $\Omega\subset B^{n+1}_2(0)$ with $\sing(V)\cap \Omega = \emptyset$ in the case $2\leq n\leq 6$ or $\H^{n-7+\gamma}(\sing(V)\cap \Omega) = 0$ for every $\gamma>0$ in the case $n\geq 7$, given any vector field $\psi\in C^1_c(\Omega\setminus \sing(V);\R^{n+1})$ with $\psi(X)\perp T_X\reg(V)$ for each $X\in \reg(V)\cap \Omega$,
	$$\left.\frac{\ext^2}{\ext t^2}\right|_{t=0}\|\phi(t,\cdot)_\#V\|(B^{n+1}_2(0))\geq 0$$
	where $\phi(t,\cdot)$, $t\in (-\epsilon,\epsilon)$, are the $C^2$ diffeomorphisms of $B^{n+1}_2(0)$ associated with $\psi$, described in $(\S1)$ above; equivalently (see \cite[Section 9]{simon-gmt})\footnote{ This equivalence requires two-sidedness of ${\rm reg} \, V$, which holds in a ball $\Omega$ as above in view of the smallness assumption on the singular set in $\Omega.$}  for every such $\Omega$ we have
	$$\int_{\reg\,V\cap \Omega}|A|^2\zeta^2\ \ext\H^n \leq \int_{\reg\,V\cap \Omega}|\nabla\zeta|^2\ \ext\H^n\ \ \ \ \text{for all }\zeta\in C^1_c(\reg(V)\cap\Omega)$$
	where $A$ denotes the second fundamental form of $\reg(V)$, $|A|$ the length of $A$, and $\nabla$ the gradient operator on $\reg(V)$.
	\item[$(\S3)_Q$] No singular point of $V$ with density $< Q$ is a classical singularity of $V$ (see Definition \ref{defn:classical-singularity}).
\end{enumerate}

Let $\BC = \sum^N_{i=1}q_i|H_i|$ be a stationary classical cone in $\R^{n+1}$ with density $\Theta_{\BC}(0) = Q+1/2$ and spine $S(\BC) = \{0\}^2\times\R^{n-1}$, i.e., $q_i\geq 1$ are integers and $H_i$ are distinct half-hyperplanes with $\del H_i = S(\BC)$ for each $i=1,2,\dotsc,N$, such that $\sum_{i}q_i = 2Q+1$ (in particular, as $N\geq 3$ we have $q_i\geq Q$ for at most two $i$) and, if we write $H_i = R_i\times\R^{n-1}$, where $R_i:= \{t\mathbf{w}_i:t>0\}$, for distinct unit vectors $\mathbf{w}_1,\dotsc,\mathbf{w}_N\in \R^2$, we have $\sum_i q_i\mathbf{w}_i = 0$; in particular, $\BC = \BC_0\times\R^{n-1}$, where $\BC_0 = \sum_{i=1}^Nq_i|R_i|$. Now let $\sigma_0:= \max_{i\neq k}\{\mathbf{w}_i\cdot\mathbf{w}_k\}$, and let $N(H_i)$ denote the conical neighbourhood of $H_i$ defined by:
$$N(H_i):= \left\{(x,y)\in \R^2\times\R^{n-1}:x\cdot\mathbf{w}_i>|x|\sqrt{\frac{1+\sigma_0}{2}}\right\}.$$
Denote by $\tilde{H}_i$ the hyperplane containing $H_i$, and by $\tilde{H}_i^\perp$ the orthogonal complement of $\tilde{H}_i$ in $\R^{n+1}$. Also, write $r(X) = |x|$, where $X = (x,y)\in \R^2\times\R^{n-1}$.

Our main result is then the following:

\begin{thmx}\label{thm:A}
	Let $\BC$ be as above. Then, there is a constant $\epsilon = \epsilon(\BC_0,n)$ such that the following is true: if $V\in \S_Q$ has $\Theta_V(0)\geq \Theta_{\BC}(0)$ and
	$$\int_{B_1^{n+1}(0)}\dist^2(X,\spt\|\BC\|)\ \ext\|V\|<\epsilon,\ \ \ \ \text{and}$$
	$$\|V\|(B^{n+1}_{1/2}(0)\setminus\{r(X)<1/8\}\cap N(H_i)) \geq \left(q_i - 1/4\right)\H^n(B^{n+1}_{1/2}(0)\setminus\{r(X)<1/8\}\cap H_i)$$
	for each $i\in \{1,\dotsc,N\}$, then, for each $i\in \{1,\dotsc,N\}$, there is a function
	$$\gamma_i\in C^{1,\alpha}\left(\overline{S(\BC)\cap B^{n+1}_{1/2}(0)};\tilde{H}_i\cap \{X:r(X)<1/16\}\right)$$
	and functions $u_{i}:\Omega_i\to \A_{q_i}(\tilde{H}^\perp_i$), where $\Omega_i$ is the connected component of $\tilde{H}_i\cap B^{n+1}_{1/2}(0)\setminus\{x+\gamma_i(x):x\in S(\BC)\cap B_{1/2}^{n+1}(0)\}$ with $\left(H_i\setminus\{r(X)<1/16\}\right)\cap B^{n+1}_{1/2}(0)\subset\Omega_i$, such that:
	 \begin{enumerate}
	 	\item [(i)] $u_i\in GC^{1,\alpha}(\overline{\Omega}_i;\A_{q_i}(\tilde{H}_i))$ (see Definition \ref{defn:GC1a}) with $\mathbf{v}(u_i)$ a stationary integral varifold, where $\mathbf{v}(u)$ is the varifold $(\graph(u_i),\theta)$, where if we write $u_i(X) = \sum^{q_i}_{j=1}\llbracket u_i^j(X)\rrbracket$ for $X\in \Omega_i$, then $\graph(u_i):= \{(u_i^j(X),X): X\in\Omega_i,\; j=1,\dotsc,q_i\}$, and the multiplicity function $\theta$ at a point $(u^j_i(X),X)\in \graph(u_i)$ is given by $\theta(u_i^j(X),X):= \#\{k:u^k_i(X) = u^j_i(X)\}$ for each $j=1,\dotsc,q_i$;
		\item [(ii)] for each $i=1,\dotsc,N$, $\left. u_i\right|_{\del \Omega_i\cap B^{n+1}_{1/2}(0)} = q_i\llbracket b_i\rrbracket$, where $b_i:\del \Omega_i\cap B^{n+1}_{1/2}(0)\to \tilde{H}^\perp_i$ is a single-valued $C^{1,\alpha}$ function, and moreover if $\tilde{b}_i(x):= x+b_i(x)$ for $x\in \del\Omega_i\cap B^{n+1}_{1/2}(0)$, then $\textnormal{image}(\tilde{b}_i) = \textnormal{image}(\tilde{b}_j)$ for all $i,j$;
		\item [(iii)] $V\res B^{n+1}_{1/2}(0) = \sum^{N}_{i=1}\mathbf{v}(u_i)\res B^{n+1}_{1/2}(0);$
		\item [(iv)] for each $i=1,\dotsc,N$,
		$$\{Z:\Theta_V(Z)\geq Q+1/2\}\cap B^{n+1}_{1/2}(0) = \{Z:\Theta_V(Z) = Q+1/2\} = \tilde{b}_i(\del \Omega_i\cap B^{n+1}_{1/2}(0));$$
		 \end{enumerate}
		here, $\alpha = \alpha(n,Q)\in (0,1)$. Moreover, for each $i=1,\dotsc,N$, we have
		$$|u_i|_{1,\alpha;\overline{\Omega}_i}\leq C\left(\int_{B_1^{n+1}(0)}\dist^2(X,\spt\|\BC\|)\ \ext\|V\|(X)\right)^{1/2}$$
		where $C = C(n)\in (0,\infty)$. In particular, $V$ has a unique tangent cone at every point in $B^{n+1}_{1/2}(0)$, and $\{X:\Theta_V(X) = Q+1/2\}\cap B^{n+1}_{1/2}(0)$ is a connected $C^{1,\alpha}$ submanifold of dimension $n-1$.
\end{thmx}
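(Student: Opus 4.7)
The plan is to carry out an iterative blow-up and excess-decay scheme in the spirit of \cite{minter-5-2}, relying on two principal inputs developed in the present paper: Lemma \ref{lemma:gaps}, which rules out density gaps in $\S_Q$ near stationary classical cones of density $Q+1/2$, and a boundary regularity theorem for the associated blow-up class, obtained by extending Almgren-type frequency monotonicity from interior flat singular points to boundary points of blow-ups.

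First, I would introduce the blow-up class $\FB(\BC)$ consisting of suitable weighted $L^2$-limits of the normalized graphs of $V_k$ over the half-hyperplanes $H_i$, taken along sequences $V_k\in\S_Q$ with $L^2$-excess relative to $\BC$ tending to zero. On the interior of each sheet, Theorem \ref{thm:MW1} (applied to the at most two sheets with $q_i\geq Q$) together with standard regularity from \cite{wickstable} (applied to the sheets with $q_i<Q$, which are smoothly immersed) gives that every $\phi\in\FB(\BC)$ is multi-valued harmonic and $C^{1,\alpha}$ in the interior of each half-hyperplane. The crucial step is boundary regularity at $S(\BC)$: here I would extend the frequency monotonicity argument of \cite{minterwick} from interior flat singular points to boundary points of $\phi$; Lemma \ref{lemma:gaps} is essential here, as without it the sheets could decouple at the boundary and the blow-up class would not have a well-defined common boundary. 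This should yield a unique ``boundary tangent map'' of $\phi$ at each boundary point, with a Hölder decay rate, and then a Campanato-type estimate gives $C^{1,\alpha}$ regularity of $\phi$ up to and including the boundary of $S(\BC)$, with a single-valued $C^{1,\alpha}$ boundary trace whose image is common across all sheets.

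With $C^{1,\alpha}$ boundary regularity of $\FB(\BC)$ in hand, I would run the iterative excess-decay scheme of \cite{minter-5-2}: at the origin (where $\Theta_V(0)\geq Q+1/2$ is assumed) one produces a blow-up $\phi_0\in\FB(\BC)$, and the $C^{1,\alpha}$ boundary regularity of $\phi_0$ then furnishes a classical cone $\BC'$ near $\BC$ relative to which the excess of $V$ at a smaller fixed scale is a definite fraction of the original excess. Iterating and running the same argument at each nearby point $Z\in B^{n+1}_{1/2}(0)$ with $\Theta_V(Z)\geq Q+1/2$ (a set which is forced to be $\H^{n-1}$-non-trivial about $S(\BC)$ by Lemma \ref{lemma:gaps}) yields geometric decay of the excess relative to appropriate classical cones at all dyadic scales and all such points. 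A standard Campanato-type argument then produces the $C^{1,\alpha}$ parametrization $\gamma_i$ of the set $\Gamma_V := \{Z : \Theta_V(Z)\geq Q+1/2\}\cap B^{n+1}_{1/2}(0)$ over $S(\BC)$, together with the $C^{1,\alpha}$ multi-valued graphical representations $u_i$ on $\Omega_i$ satisfying (i), (ii) and the stated estimate; (iii) follows since the decomposition $\sum_i \mathbf{v}(u_i)$ is forced by the mass bounds in the hypothesis and the graphicality away from $\Gamma_V$.

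Conclusion (iv) is then a consequence of (i)--(iii): off $\Gamma_V$, each $u_i$ is a classical $C^{1,\alpha}$ multi-valued graph whose density at interior points is strictly less than $Q+1/2$ (by Theorem \ref{thm:MW1} applied on the interior), while along $\Gamma_V=\tilde{b}_i(\del\Omega_i\cap B^{n+1}_{1/2}(0))$ the common boundary condition of the $N\geq 3$ sheets forces $\Theta_V \equiv Q+1/2$. The main obstacle, as stressed in the introduction, is the boundary regularity for $\FB(\BC)$, and within that the extension of the frequency monotonicity identity to boundary points of a blow-up $\phi$ — interior monotonicity at flat singular points is already known from \cite{minterwick}, but the transfer to the boundary requires new ideas, relying crucially on Lemma \ref{lemma:gaps} to rule out the degenerate possibility that $\phi$ concentrates energy at the boundary through a density gap. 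Once this is established, the remaining iterative excess-decay and Campanato-matching steps, though technically delicate, are essentially those of \cite{minter-5-2}.
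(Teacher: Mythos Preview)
Your outline is correct and matches the paper's approach: the two new ingredients beyond \cite{minter-5-2} are exactly Lemma \ref{lemma:gaps} (no density gaps) and the $GC^{1,\gamma}$ boundary regularity of the blow-up class (Theorem \ref{thm:boundary-reg}), after which the coarse/fine/ultra-fine blow-up and excess-decay iteration proceed verbatim as in \cite{minter-5-2}.

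One clarification on the mechanism for boundary regularity, since your phrasing (``rule out the degenerate possibility that $\phi$ concentrates energy at the boundary'') suggests you may be anticipating a boundary analogue of the energy non-concentration estimate from \cite{minterwick}. The paper observes that no such estimate is needed: once the no-density-gap lemma gives the Hardt--Simon inequality $(\FB4)$, the average-free part of each blow-up vanishes along $\partial H$, and this alone (together with interior $GC^1$ regularity up to $\partial H$ away from the spine, which comes from the classification argument itself) lets one push the interior squash and squeeze identities to test functions supported up to $\partial H$ by a simple cutoff argument (Section \ref{sec:squash-squeeze}). Frequency monotonicity at boundary points then follows formally from these identities, and the boundary regularity is deduced via the classification of homogeneous degree one elements (Lemma \ref{lemma:classification}) rather than from a direct decay estimate. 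So the role of Lemma \ref{lemma:gaps} in the boundary regularity is precisely to secure $(\FB4)$; the extension of frequency to the boundary is then essentially soft.
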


\textbf{Remark:} More can be said in the situation where $V$ has a singular point with a tangent cone taking this form. In this case, Theorem \ref{thm:A} holds on a neighbourhood of the point, and moreover as the varifold has a tangent cone, it tells us that all the functions over a given half-hyperplane must have a derivative at a point $\del\Omega_i\cap B^{n+1}_{1/2}$ which agrees with that of the corresponding half-hyperplane in the tangent cone. As such, over half-hyperplanes of density $q_i<Q$, the function $u_i$ is in fact a single $C^{1,\alpha}$ function of multiplicity $q_i$; this follows from the fact that the sheets of $u_i$ are ordered (i.e. we can write $u_i = \sum_{\alpha=1}^{q_i}\llbracket u^\alpha_i\mathbf{w}_i^\perp\rrbracket$, with $u_i^1\leq \cdots \leq u_i^{q_i}$ and $u_i^\alpha\in C^{1,\alpha}$ for each $i$; here $\mathbf{w}_i^\perp$ is the orthogonal vector to $\mathbf{w}_i$ in $\R^2$ extended to $\R^{n+1}$), which follows from \cite{wickstable} (see also Theorem \ref{thm:MW2}) and the Hopf boundary point lemma. Of course, in the general situation this need not be true, as there is nothing to guarantee agreement of the derivatives at a boundary point for the ordered sheets, and it is indeed entirely possible for a higher multiplicity half-hyperplane to split into several nearby lower multiplicity half-hyperplanes.\hfill $\blacktriangle$

We then have the following corollary of Theorem \ref{thm:A} regarding the local structure of varifolds $V\in \S_Q$ in the (open) region $\{\Theta_V<Q+1\}$:

\begin{thmx}\label{thm:B}
	Let $V\in \S_Q$. Then
	$$\spt\|V\|\cap B^{n+1}_{1}(0)\cap \{\Theta_V<Q+1\} = \Omega\cup \B\cup \CC_Q\cup \CC_{Q+1/2}\cup K$$
	where:
	\begin{enumerate}
		\item [(i)] $\Omega$ is the set of points $X\in \spt\|V\|\cap B^{n+1}_1(0)\cap \{\Theta_V<Q+1\}$ such that for some $\delta_X >0$, $\spt\|V\|\cap B_{\delta_X}(X)$ is a smoothly embedded hypersurface;
		\item [(ii)] \textnormal{(\cite[Theorem A]{minterwick})} $\B$ is the set of points $X\in \spt\|V\|\setminus\Omega$ where one tangent cone to $V$ at $X$ is of the form $Q|P|$ for some hyperplane $P$; moreover, this is the unique tangent cone to $V$ at $X$, and there is a $\delta_X>0$ such that $V\res B_{\delta_X}(X)$ is given by a $GC^{1,\alpha}$ $Q$-valued function over a domain in $P$, where $\alpha = \alpha(n,Q)$;
		\item [(iii)] \textnormal{(\cite[Theorem C]{minterwick})} $\CC_Q$ is the set of points $X\in \spt\|V\|$ such that one tangent cone to $V$ at $X$ is a classical cone of vertex density $Q$; in particular, $X$ is a classical singularity of $V$;
		\item [(iv)] $\CC_{Q+1/2}$ is the set of points $X\in \spt\|V\|$ such that one tangent cone to $V$ at $X$ is a classical cone of vertex density $Q+1/2$; moreover, this is the unique tangent cone to $V$ at $X$, and the conclusions of Theorem \ref{thm:A} hold in some neighbourhood of $X$;
		\item [(v)] $K = \S_{n-2}$ is the usual $(n-2)$-stratum of the singular set of $V$; in particular, it is countably $(n-2)$-rectifiable (\cite{naber-valtorta}).
	\end{enumerate}
\end{thmx}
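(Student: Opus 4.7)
The plan is to use the tangent-cone stratification of the singular set, combined with the regularity theorems already established for the class $\S_Q$ (Theorems \ref{thm:MW1}, \ref{thm:MW3}, and the present Theorem \ref{thm:A}), to assign each $X \in \spt\|V\| \cap B_1^{n+1}(0) \cap \{\Theta_V < Q+1\}$ to exactly one of the five sets. Given such $X$, upper semi-continuity of density gives $\Theta_{\BC_X}(0) \leq \Theta_V(X) < Q+1$ for any tangent cone $\BC_X$. Splitting by the dimension of the spine $S(\BC_X)$ and, in codimension one, by the shape of the $2$-dimensional cross-section, the only possibilities for $\BC_X$ are: a hyperplane $m|P|$ with $m \in \{1,\dots,Q\}$; a classical cone $\sum_i q_i|H_i|$ with $N := \sum_i q_i \in \{3,\dots,2Q+1\}$; or a cone with spine of dimension $\leq n-2$.

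For the hyperplane case with $m < Q$, upper semi-continuity makes $\{\Theta_V < Q\}$ a neighborhood of $X$, in which $(\S3)_Q$ prohibits any classical singularity; Wickramasekera's regularity theorem \cite{wickstable} then yields that $V$ is a smoothly embedded hypersurface (or embedded stack) near $X$, so $X \in \Omega$. For $m = Q$, Theorem \ref{thm:MW1} supplies the unique tangent cone and local $GC^{1,\alpha}$ $Q$-valued graph structure, placing $X \in \B$. For the classical-cone case, the subcase $N < 2Q$ is vacuous: the same argument as above gives $V$ smooth in a punctured neighborhood of $X$, and then the classical-cone tangent combined with boundary regularity for smooth minimal sheets meeting at a common axis (via \cite{krummel-reg} or Hardt--Simon-type boundary regularity) would force $X$ itself to be a classical singularity of density $<Q$, contradicting $(\S3)_Q$. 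The subcase $N = 2Q$ is handled by Theorem \ref{thm:MW3}, giving $X \in \CC_Q$. The subcase $N = 2Q+1$ is handled by Theorem \ref{thm:A}, applied after restricting to a sufficiently small ball around $X$; the $L^2$-closeness hypothesis and the conical-neighborhood mass bounds of Theorem \ref{thm:A} hold at small enough scale by varifold convergence of the rescalings to $\BC_X$ together with $\Theta_V(X) = Q+1/2$, and we conclude $X \in \CC_{Q+1/2}$ with the stated neighborhood structure. Finally, if no tangent cone at $X$ has spine dimension $\geq n-1$, then $X \in \S_{n-2}$ by definition, and $X$ is placed in $K$; the countable $(n-2)$-rectifiability is the content of \cite{naber-valtorta}.

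The five sets are pairwise disjoint (by distinct density values together with the tangent-cone classification) and their union exhausts the region, by the case analysis above. The main subtlety is ruling out classical-cone tangents of density $<Q$: one must upgrade "smooth in a punctured neighborhood with a classical-cone tangent at the puncture" to "classical singularity at the puncture" in order to derive the contradiction with $(\S3)_Q$; this is where boundary regularity of smooth minimal sheets converging to half-hyperplanes along a common axis enters, and is the one ingredient not already packaged in the theorems directly cited. Everything else is a clean case distinction on tangent-cone type.
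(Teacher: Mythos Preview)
Your proposal is essentially correct and matches the paper's own (implicit) argument: Theorem~\ref{thm:B} is stated in the paper as a direct corollary of Theorem~\ref{thm:A} together with the cited results from \cite{minterwick} and \cite{naber-valtorta}, with no separate proof given, and your case-by-case analysis on tangent-cone type is exactly the intended reasoning.

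One small correction concerns your handling of the subcase $N<2Q$. Your route via ``$V$ smooth in a punctured neighbourhood of $X$, then boundary regularity of the sheets forces $X$ to be a classical singularity of density $<Q$'' is not quite sound as stated: in dimensions $n\geq 7$ the conclusion from \cite{wickstable} in the region $\{\Theta_V<Q\}$ is only that $\sing(V)$ has Hausdorff dimension $\leq n-7$, not that $V$ is smooth in a punctured neighbourhood of $X$, so the premise of your boundary-regularity step is not available. In fact this subcase is already directly packaged in \cite{wickstable}: the Minimum Distance Theorem there (invoked explicitly in the paper's introduction, where it is remarked that ``by \cite{wickstable}, such a tangent cone necessarily has $\geq 2Q$ half-hyperplanes'') says that under the no-classical-singularities hypothesis no singular point can have a classical cone as a tangent cone. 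Since $(\S3)_Q$ forbids classical singularities in $\{\Theta_V<Q\}$, classical-cone tangents with $N<2Q$ half-hyperplanes simply do not occur, and no separate boundary-regularity argument is needed. With this adjustment your proof is complete.
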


\textbf{Remark:} It is possible that, in general, in the region $\{\Theta_V\leq Q+1/2\}$ we have $\S_{n-2}\setminus\S_{n-3} = \emptyset$, where $\S_{n-3}$ is the $(n-3)$-stratum of $\sing(V)$; if this were true, in the region $\{\Theta_V\leq Q+1/2\}$ we could take $K$ to be $\S_{n-3}$, which is countably $(n-3)$-rectifiable. This is because points $X\in \S_{n-2}\setminus\S_{n-3}$ have a tangent cone of the form $\BC =\BC_0\times\R^{n-2}$, where $\BC_0\subset\R^3$ is a stationary integral cone in $\R^3$. As such the link, $\Sigma:= \BC_0\res S^2$, is a stationary integral $1$-varifold in the unit sphere $S^2\subset\R^3$, and so is a sum of great circle arcs meeting at junction points. However, in the present setting where $V$ has no classical singularities of density $<Q$, and in the region where $\Theta_V<Q+1/2$, every junction must be formed of $2Q$(possibly non-distinct) great circle arcs meeting at a point. When $Q=1$, it follows immediately that $\BC_0$ must be a single great circle (or multiplicity one). When $Q=2$, then as all junctions are formed of $4$-junction points, then necessarily $\BC_0$ is a sum of great circles, and so this forces $\Theta_{\BC_0}(0)\geq 3$; so in particular that in the region $\{\Theta_V\leq 5/2\}$ we could take $K$ to be $\S_{n-3}$ when $Q=2$. It is entirely possible that the other configurations, and for general $Q$, require $\Theta_{\BC_0}(0)\geq Q+1/2$. In the region $\{Q+1/2<\Theta_V <Q+1\}$, it is possible to have points in $\S_{n-2}$ (for example, when $Q=1$, the tetrahedral cone has density between $3/2$ and $2$ -- see \cite{polyhedral}).

We note that our results have natural generalisations to the setting where the ambient space is an arbitrary Riemannian manifold $(M^{n+1},g)$, and the changes to the proofs are only technical (in a similar manner to that seen in \cite[Section 18]{wickstable}).

\subsection{Overview of Paper}

We shall only detail the aspects of the proof which are different to that seen in \cite{minter-5-2}; as such, we will not detail any blow-up, fine blow-up, or ultra fine blow-up procedures, as they will be identical to those seen in \cite{minter-5-2}. Our main focus will therefore be on proving that density gaps do not occur for our class (see Section \ref{sec:density-gaps}) and that one can still prove a $GC^{1,\alpha}$ boundary regularity statement for the various blow-up classes (see Section \ref{sec:blow-up}); this latter fact will require an extension of suitable variational identities which hold in the interior of a half-ball to boundary points, which is shown in Section \ref{sec:squash-squeeze}. In Section \ref{sec:prelim} we will provide all the preliminary material needed from \cite{minterwick} for our results, including the various varifold regularity results as well as properties of (coarse) blow-ups of sequences of varifolds in $\S_Q$ converging to a multiplicity $Q$ hyperplane. In Section \ref{sec:classes}, we define the various classes of varifolds we are interested in, and introduce the notion of a \textit{multiplicity $Q$ class} analogous to that of a multiplicity two class seen in \cite{minter-5-2} (which, just as the notion of multiplicity one class used in \cite{simoncylindrical}, is a natural notion in the setting when the varifold is close to a classical cone of half-integer vertex density). The rest of the paper will then detail the necessary modifications to the argument seen in \cite{minter-5-2} needed to establish Theorem \ref{thm:A}.

\textbf{Acknowledgements:} The author would like to thank Neshan Wickramasekera for numerous helpful discussions over the course of this work.

\section{Preliminaries}\label{sec:prelim}

In this section we recall our basic set-up and various notions and results from the works \cite{minter-5-2} and \cite{minterwick} which we shall use in the present paper.

\subsection{Basic Notation and Definitions}

We work in $\R^{n+1}$ throughout. Often we will work with coordinates $X = (x,y)\in \R^2\times\R^{n-1}\cong\R^{n+1}$, where the $x$-coordinates will be those for the cross-section of a cone and the $y$-coordinates for its spine; with these coordinates, we write $r(X):= |x|$ and $R(X):= |X|$. We also write, for $\rho>0$ and $X\in\R^{n+1}$, $B^{n+1}_\rho(X):= \{Y\in \R^{n+1}:|Y-X|<\rho\}$, however often we will suppress the superscript and just write $B^{n+1}_\rho(X)\equiv B_\rho(X)$ and only write the superscript when we wish to stress the dimension of the (open) ball. Much of our language will be the same as in \cite{simon-gmt}, although more specialised language can be found in \cite{minter-5-2} and \cite{minterwick}; we only recall here the notions which are less standard so to assist the reader.

For $V$ a stationary integral $n$-varifold in $B^{n+1}_{2}(0)$, we write $\reg(V)$ for its \textit{regular part}, which is the set of points $X\in \spt\|V\|$ such that there is a $\rho>0$ for which $\spt\|V\|\cap B_\rho(X)$ is a smoothly embedded $n$-submanifold in $B_\rho(X)$. The (\textit{interior}) \text{singular set} of $V$ is then $\sing(V):= (\spt\|V\|\setminus \reg(V))\cap B^{n+1}_2(0)$, and for $X\in \sing(V)$, we write $\vartan_X(V)$ for the set of tangent cones to $V$ at $X$. For $\BC\in \vartan_X(V)$, we have $\Theta_\BC(0) = \Theta_V(X)$, and we write
$$S(\BC):= \{Y\in \R^{n+1}:\Theta_{\BC}(Y) = \Theta_{\BC}(0)\}$$
for the \textit{spine} of $\BC$; it is standard the $S(\BC)$ is equal to the set of points along which $\BC$ is translation invariant, and thus is a subspace of $\R^{n+1}$. Hence, one can find a rotation $q:\R^{n+1}\to \R^{n+1}$ such that $q_\#\BC = \BC_0\times\R^{\dim(S(\BC))}$, where $\BC_0$ is a $(n-\dim\,S(\BC))$-dimensional stationary cone in $\R^{n+1-\dim(S(\BC))}$. In general, a \textit{classical cone} in $\R^{n+1}$ is an integral varifold $\BC$ of the type $\BC = \sum^N_{j=1}q_j|H_j|$ where $N$ is an integer $\geq 3$, $H_1,\dotsc,H_N$ are half-hyperplanes with a common boundary (also called the \textit{spine}) $S(\BC)=\del H_j$ for all $j=1,\dotsc,N$, and $q_j$ are positive integers; we write $\CC_Q$ for the set of classical cones of vertex density $Q$ in $\R^{n+1}$. Hence, $\BC\in \vartan_X(V)$ is a classical cone when $\dim(S(\BC)) = n-1$, i.e., up to rotation, $\BC = \BC_0\times \R^{n-1}$, where now $\BC_0\subset \R^2$ is a stationary integral cone, and hence we can write $\BC = \sum_{i=1}^Nq_i|H_i|$, where $N$ is an integer $\geq 3$ and $H_i$ are half-hyperplanes with $\del H_i =\del H_j$ ($=S(\BC)$) for each $i,j$. If $\BC$ is a classical cone, it readily follows that $\Theta_\BC(0) \in \{3/2,2,5/2,\dotsc\}$.

\textbf{Definition.} Let $V$ be a stationary integral $n$-varifold in $B^{n+1}_2(0)$. We say that $X\in \sing(V)$ is a \textit{branch point} if at least one tangent cone to $V$ at $X$ is supported on a hyperplane, yet there is no neighbourhood of $X$ on which $\spt\|V\|$ is a union of finitely many (smoothly) embedded submanifolds. Write $\B_V\equiv\B$ for the set of branch point singularities of $V$.

The usual stratification of the singular set, $\S_0\subset\S_1\subset\cdots\subset\S_{n-1}\subset\sing(V)$, is given by $\S_{j}:= \{X\in\sing(V):\dim(S(\BC))\leq j\text{ for all }\BC\in\vartan_X(V)\}$.

\begin{defn}\label{defn:classical-singularity}
Let $T\subset\R^{n+1}$ be a subset. A $C^1$ (resp. $C^{1,\alpha}$, for $\alpha\in (0,1)$) \textit{classical singularity} of $T$ is a point $y\in T$ such that for some $\rho>0$ and some integer $N\geq 3$, $T \cap B_\rho^{n+1}(y) = \cup_{j=1}^N M_j$, where $M_j\subset B^{n+1}_\rho(y)$ are embedded $C^1$ (resp. $C^{1,\alpha}$) submanifolds-with-boundary having the same $(n-1)$-dimensional $C^1$ boundary $L = \del M_j$ for each $j=1,2,\dotsc,N$, with $y\in L$, and $M_j\cap M_i = L$ for $i\neq j$, and with $M_i$ and $M_j$ intersecting transversely at every point of $L$ for at least one pair of indices $i,j$.
\end{defn}

We say that $y\in T$ is a \textit{classical singularity} if $y$ is a $C^{1,\alpha}$ classical singularity of $T$ for some $\alpha\in (0,1)$. For $V$ an $n$-varifold on $\R^{n+1}$, $y\in \spt\|V\|$ is a $C^1$ \textit{classical singularity} (resp. $C^{1,\alpha}$ \textit{classical singularity}, \textit{classical singularity}) of $V$ if it is a $C^1$ classical singularity (resp. $C^{1,\alpha}$ classical singularity, classical singularity) of $\spt\|V\|$.

For $A\subset\R^n$ and a $Q$-valued function $f:A\to \A_{Q}(\R)$, we denote by $\CC_f$ the set of points $x\in A$ such that $f^1(x) = f^2(x) = \cdots = f^Q(x)$ and the point $(f^1(x),x)$ is a $C^1$ classical singularity of $\graph(f)$.

We now recall the notion of \textit{generalised}-$C^1$ and \textit{generalised}-$C^{1,\alpha}$ regularity for an $\A_Q(\R)$-valued function $f$, as defined in \cite{minterwick}:

\begin{defn}\label{defn:GC1}
	Let $U\subset\R^n$ be an open set. We say that a function $f:U\to \A_Q(\R)$ belongs to $GC^1(U)$, or equivalently that $f$ is \textit{generalised}-$C^1$ in $U$, if:
	\begin{enumerate}
		\item [(i)] $f$ is differentiable (as a function on $U$) at every point $y\in U\setminus\CC_f$ in the classical sense, i.e. taking $f$ to be an $\R^Q$-valued function $x\mapsto (f^1(x),\dotsc,f^Q(x))$ (with $f^1\leq \cdots\leq f^Q$ always), and;
		\item [(ii)] the derivative $Df$ is continuous on $U\setminus \CC_f$.
	\end{enumerate}
\end{defn}

\begin{defn}\label{defn:branch-regular-sets}
	Let $U\subset\R^n$ be an open set and let $f\in GC^1(U)$. We write $\CR_f$ for the set of points $y\in U$ for which there is $\rho_y \in (0,\dist(y,\del U))$ such that $\left. f^1\right|_{B_{\rho_y}(y)},\dotsc,\left. f^Q\right|_{B_{\rho_y}(y)}\in C^1(B_{\rho_y}(y);\R)$; we call $\CR_f$ the \textit{regular set} of $f$.
	
	We set $\B_f:= U\setminus (\CR_f\cup \CC_f)$ and call $\B_f$ the \textit{branch set} of $f$.
\end{defn}

\begin{defn}\label{defn:GC1a}
	Let $U\subset \R^n$ be open and $\alpha\in (0,1)$. We say that a function $f:U\to \A_Q(\R)$ is \textit{generalised}-$C^{1,\alpha}$, or equivalently $f\in GC^{1,\alpha}(U)$, if $f\in GC^1(U)$ and, with the notation as in Definition \ref{defn:branch-regular-sets}: (a) for each compact $K\subset U$, each $y\in \R_f$ and for the largest $\rho_y$ corresponding to $y$, the functions $f^j$, $j=1,\dotsc,Q$, are in $C^{1,\alpha}(\overline{B}_{\rho_y}(y)\cap K)$; (b) each $y\in \CC_f$ is a $C^{1,\alpha}$ classical singularity of $\graph(f)$; (c) the map $Df$ is in $C^{0,\alpha}_{\text{loc}}(\CR_f\cup\B_f;\A_Q(\mathcal{M}_{1\times n}))$, i.e. for each compact set $K\subset \CR_f\cup \B_f$,
	$$\sup_{x_1,x_2\in K:\;x_1\neq x_2}\frac{\G(Df(x_1),Df(x_2))}{|x_1-x_2|^\alpha}<\infty.$$
\end{defn}

\subsection{Known Regularity Results for the Class $\S_Q$}

Throughout the current work we will need various regularity results from \cite{minterwick} for varifolds $V\in \S_Q$ (as defined in Section \ref{sec:intro}); we record the statements here to aid the reader.

\begin{theorem}[Theorem A, \cite{minterwick}]\label{thm:MW1}
	There is a number $\epsilon = \epsilon(n,Q)\in (0,1)$ such that if $V\in \S_Q$, $(\w_n2^n)^{-1}\|V\|(B^{n+1}_2(0))<Q+1/2$, $Q-1/2\leq \w_n^{-1}\|V\|(\R\times B^n_1(0))<Q+1/2$, and $\int_{\R\times B^n_1(0)}|x^1|^2\ \ext\|V\|<\epsilon$, then we have the following: there is a generalised-$C^{1,\alpha}$ ($Q$-valued) function $u:B^{n}_{1/2}(0)\to \A_Q(\R)$ such that:
	\begin{enumerate}
		\item [(i)] $\CR_u = \pi(\{X\in \spt\|V\|\cap (\R\times B^n_{1/2}(0)):\Theta_V(X)<Q\})$; $\B_u\cup\CC_u = \pi(\{X\in\R\times B^n_{1/2}(0): \Theta_V(X)\geq Q\})$, where $\pi:\R^{n+1}\to \{0\}\times\R^n$ is the orthogonal projection, and $\{X\in \R\times B^n_{1/2}(0):\Theta_V(X)>Q\} = \emptyset$;
		\item [(ii)] $$\sup_{B_{1/2}^n(0)}|u| + \sup_{B^n_{1/2}(0)\setminus\CC_u}|Du| \leq C\left(\int_{\R\times B^n_1(0)}|x^1|^2\ \ext\|V\|(X)\right)^{1/2};$$
		\item [(iii)] $V\res (\R\times B^n_{1/2}(0)) = \mathbf{v}(u)\res (\R\times B^n_{1/2}(0))$.
	\end{enumerate}
	In particular, every singular point $Y$ of $V\res (\R\times B^n_{1/2}(0))$ is either a density $Q$ classical singularity or a density $Q$ branch point, with a unique tangent cone $\CC_Y$ at $Y$ in either case with $\pi^{-1}(\pi(Y)) = \{Y\}$, and moreover
	$$\rho^{-n-2}\int_{\R\times B^n_\rho(\pi(Y))}\dist^2(X,\spt\|\BC_Y\|)\ \ext\|V\|(X) \leq C\rho^{2\alpha}\int_{\R\times B_1^n(0)}|x^1|^2\ \ext\|V\|(X)\ \ \ \ \text{for all }\rho\in (0,1/4]$$
	and, for any singular points $Y_1,Y_2$ of $V\res (\R\times B^n_{1/2}(0))$, we have
	$$\dist_\H(\spt\|\BC_{Y_1}\|\cap B_1^{n+1}(0),\spt\|\BC_{Y_2}\|\cap B^{n+1}_1(0)))\leq C|Y_1-Y_2|^\alpha\left(\int_{\R\times B^n_1(0)}|x^1|^2\ \ext\|V\|(X)\right)^{1/2};$$
	here, $\alpha = \alpha(n,Q)\in (0,1)$ and $C = C(n,Q)\in (0,\infty)$.
\end{theorem}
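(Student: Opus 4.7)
The plan is to prove Theorem \ref{thm:MW1} via a multi-scale blow-up and excess-decay scheme, reducing to a regularity theorem for an associated coarse blow-up class. The overall structure resembles Schoen--Simon \cite{SS} and Wickramasekera \cite{wickstable}, but with two crucial new difficulties: $V$ may carry density-$Q$ branch points and classical singularities (not merely smoothly embedded regular points), and the coarse blow-ups of $V$ relative to the multiplicity-$Q$ plane need not a priori satisfy any variational principle, so Almgren-type frequency monotonicity must be established by non-standard means.

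First I would extract the coarse blow-up class $\mathfrak{B}$: given a sequence $V_k \in \S_Q$ satisfying the hypotheses with $L^2$-excess $E_k \to 0$, an Almgren-type Lipschitz $Q$-valued approximation lets me write $V_k \res (\R \times B^n_{7/8})$ as the graph of a Lipschitz $Q$-valued function $u_k$ with small Lipschitz constant and $\|u_k\|_{L^2}^2 \lesssim E_k$. Setting $w_k := u_k / E_k^{1/2}$ and extracting a subsequential limit $w$ gives an element of $\mathfrak{B}$; standard compactness arguments show $w \in W^{1,2}_{\mathrm{loc}}(B^n_{7/8}; \A_Q(\R))$ with harmonic average.

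The heart of the proof is to show that every $w \in \mathfrak{B}$ has monotone Almgren frequency function $N_w(y_0, \cdot)$ at every $y_0 \in B^n_{7/8}$, arguing point-by-point. At points $y_0$ arising as limits of classical singularities of $V_k$, the first-variation identities for $V_k$, tested against dilation vector fields adapted to the classical spine and divided by $E_k$, yield limiting variational identities for $w$. At points arising as limits of flat singular points of $V_k$ (branch points or nearby planar points), an energy non-concentration estimate is needed: this would be proved by testing the stability inequality for $V_k$ against a normal vector field whose Dirichlet energy concentrates near $y_0$ and combining with first variation to control the frequency defect. Together these give frequency monotonicity, and standard Almgren analysis then produces a unique homogeneous tangent map of frequency $\geq 1 + \alpha(n,Q)$ at every singular point of $w$; combined with interior elliptic theory on $\CR_w$, this gives $w \in GC^{1,\alpha}(B^n_{7/8})$.

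Finally I would transfer this blow-up regularity back to $V$ via a Campanato-type excess-decay iteration. The standard dichotomy reads: either (a) on some scale $\theta r$ the $L^2$-excess of $V$ decays geometrically relative to a new $Q$-valued affine approximation, or (b) blow-up on this scale yields a $w \in \mathfrak{B}$ with non-trivial singular behaviour at $0$. The blow-up regularity from the previous step rules out (b); iterating (a) gives the quantitative $C^{1,\alpha}$-estimate of (ii), the pointwise excess decay stated at the end of Theorem \ref{thm:MW1}, and, via telescoping, the Hausdorff-distance bound for tangent cones at distinct singular points. The identification of $\CR_u$, $\B_u$, $\CC_u$ with the density strata in (i) and the representation $V = \mathbf{v}(u)$ in (iii) emerge as by-products of the construction. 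The principal obstacle is the frequency monotonicity: without any a priori variational characterisation of $\mathfrak{B}$, the energy non-concentration estimate at flat singular points is the hardest quantitative input and demands a delicate construction of stability test vector fields whose Dirichlet energy concentrates at a prescribed point while remaining controlled up to the boundary of the sheeting region.
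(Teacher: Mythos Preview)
This theorem is not proved in the present paper: it is Theorem~A of \cite{minterwick}, quoted verbatim in Section~2.2 as a known preliminary result and used as a black box. There is therefore no proof in this paper to compare your proposal against.

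That said, your outline matches the broad architecture of \cite{minterwick} as summarised in the introduction here: Lipschitz approximation, coarse blow-up class $\FB_Q$, frequency monotonicity for blow-ups, $GC^{1,\alpha}$ regularity of blow-ups, and excess decay back at the varifold level. One point of divergence worth flagging: you propose to obtain the energy non-concentration estimate at flat singular points by ``testing the stability inequality for $V_k$ against a normal vector field whose Dirichlet energy concentrates near $y_0$.'' The introduction of the present paper describes the non-concentration estimate in \cite{minterwick} as ``a new, elementary energy non-concentration estimate at flat singular points \emph{of the blow-ups},'' i.e.\ an estimate established at the level of the limiting function $w$, not via stability of the approximating varifolds. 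Moreover, the proof sketch of Theorem~\ref{thm:MW4} indicates a bootstrap structure you have not captured: the squash identity is \cite[Lemma~3.4]{minterwick}, while the squeeze identity (\cite[Lemma~3.8]{minterwick}) uses the $GC^{1,\alpha}$ regularity conclusion itself, so the argument is not the linear ``frequency first, then regularity'' scheme you sketch. Your description of the non-concentration mechanism via stability test vector fields is likely the wrong tool here; stability enters the theory of $\S_Q$ through the Schoen--Simon framework underlying the lower-multiplicity sheeting (Theorem~\ref{thm:MW2}), not through the frequency argument.
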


One should note that if we also assume in Theorem \ref{thm:MW1} that there are no classical singularities of density $Q$ in $V\in \S_Q$, then we necessarily have $\{X\in \sing(V)\cap (\R\times B^n_{1/2}(0)): \Theta_V(X)\geq Q\} = \emptyset$, and thus $V\res (\R\times B_{1/2}^n(0))$ is expressible as $Q$ single-valued functions, which are distinct everywhere or coincide; this result is also a consequence of the main sheeting theorem in \cite[Theorem 3.3]{wickstable}, which also holds whenever $V\in \S_Q$ is close, as varifolds, to a hyperplane of multiplicity $<Q$:

\begin{theorem}[Theorem 3.3, \cite{wickstable}]\label{thm:MW2}
	There exists a number $\epsilon_0 = \epsilon_0(n,Q)\in (0,1)$ such that if $V\in \S_Q$ satisfies $(\w_n 2^n)^{-1}\|V\|(B^{n+1}_{2}(0))<Q-1/4$ and $\int_{\R\times B_1}|x^1|^2\ \ext\|V\|<\epsilon_0$, then we have
	$$V\res (\R\times B_{1/2}) = \sum^q_{j=1}|\graph(u_j)|$$
	for some $q\in \{1,2,\dotsc,\lfloor Q-1/2\rfloor\}$, where $\lfloor x\rfloor$ denotes the integer part of $x$, $u_j\in C^{1,\beta}(B_{1/2})$ for each $j$, $u_1\leq u_2\leq \cdots \leq u_q$, and:
	$$|u_j|_{C^{1,\beta}(B_{1/2})}\leq C\left(\int_{\R\times B_1}|x^1|^2\ \ext\|V\|(X)\right)^{1/2}$$
	where $C = C(n,Q)\in (0,\infty)$ and $\beta = \beta(n,Q)\in (0,1)$; furthermore, $u_j$ solves the minimal surface equation weakly on $B_{1/2}$, and hence in fact $u_j\in C^\infty(B_{1/2})$ for each $j\in \{1,\dotsc,q\}$.
\end{theorem}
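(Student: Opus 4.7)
The plan is a contradiction-compactness argument combined with a coarse blow-up and Morrey--Campanato iteration, in the spirit of Allard's regularity theorem generalised to integer density $q_0\geq 1$. Suppose for contradiction that no such $\epsilon_0$ exists, and extract a sequence $V_k \in \S_Q$ satisfying the hypotheses with $E_k^2 := \int_{\R\times B_1}|x^1|^2\,\ext\|V_k\|\to 0$ for which the conclusion fails. By Allard's varifold compactness and the $L^2$ height bound, along a subsequence $V_k \to q_0|\{x^1=0\}|$ in the varifold sense for some positive integer $q_0$; the strict mass bound combined with integrality forces $q_0 \leq \lfloor Q-1/2\rfloor = Q-1$. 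Upper semicontinuity of density then yields $\Theta_{V_k}(X) \leq q_0 < Q$ for all $X\in\spt\|V_k\|\cap(\R\times B_{1/2})$ and all large $k$, so $(\S3)_Q$ excludes classical singularities in the relevant region.

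Next, I would form a coarse blow-up $u:B_{1/2}\to\A_{q_0}(\R)$ by rescaling the height of $\spt\|V_k\|$ above $\{x^1=0\}$ by $E_k^{-1}$ (using a graph decomposition on $\reg V_k$), and show that $u$ is a $q_0$-valued harmonic function. Stability $(\S2)$, applied with Simons-type cutoffs on $\reg V_k$, produces a uniform Caccioppoli gradient bound $\int |Du_k|^2 \leq C$ together with $L^2$ non-concentration along $\sing V_k$; the absence of classical singularities of density $<Q$ and the bound $\Theta_{V_k}<Q$ force $\H^{n-1}(\sing V_k)=0$, which is exactly what non-concentration requires. Passing to the limit in the first variation identity tested against horizontal vector fields of the form $\psi(X)\partial_1$, dividing by $E_k$ and linearising, one obtains weak harmonicity of each selection of $u$. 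This is the hardest step: the blow-up is only defined away from a singular set of potentially large $\H^n$-measure, and the linearisation demands careful control of Jacobian factors and non-concentration estimates; it is here that stability, the strict density bound $\Theta_{V_k}<Q$, and the structural information from $(\S3)_Q$ are jointly essential.

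Once $u$ is known to be harmonic, standard interior estimates yield sharp $L^2$-to-$C^2$ control, which, translated back to $V_k$, gives an excess decay of the form $\int_{\R\times B_\theta}\dist^2(X,\spt\|L_k\|)\,\ext\|V_k\| \leq \tfrac{1}{2}\theta^{n+2+2\beta}E_k^2$ for a suitable affine $q_0$-valued graph $L_k$ and constants $\theta\in(0,1/2)$, $\beta=\beta(n,Q)\in(0,1)$. Iterating this estimate dyadically at every scale, recentring on the approximating affine graph at each step, yields the $C^{1,\beta}$ graphical decomposition $V=\sum_{j=1}^{q_0}|\graph u_j|$ and Morrey--Campanato-type bounds on $Du_j$, contradicting the supposed failure. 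Stationarity of $V$ then forces each $u_j$ to weakly solve the minimal surface equation, and elliptic regularity upgrades each $u_j$ to $C^\infty$, completing the argument.
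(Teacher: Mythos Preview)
The paper does not prove this statement: Theorem~\ref{thm:MW2} is quoted verbatim from \cite[Theorem~3.3]{wickstable} as a preliminary result and is given no proof here. There is therefore nothing in the present paper to compare your proposal against.

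That said, your outline is broadly in the spirit of Wickramasekera's actual argument in \cite{wickstable}: a contradiction set-up, varifold compactness forcing convergence to a multiplicity-$q_0$ plane with $q_0\leq Q-1$, a coarse blow-up shown to be harmonic, and an excess-decay iteration. However, there is a genuine structural gap in your sketch. You write that ``the absence of classical singularities of density $<Q$ and the bound $\Theta_{V_k}<Q$ force $\H^{n-1}(\sing V_k)=0$'', and then use this to justify the energy non-concentration needed for the blow-up to be well-defined and harmonic. But this implication is not a free fact: it is precisely one of the main outputs of the theory in \cite{wickstable}, proved as part of a large induction on the density parameter. In Wickramasekera's scheme, Theorem~3.3 at level $Q$ is established only after the full regularity package (sheeting, minimum distance, and the dimension bound on the singular set) has already been secured at all densities $<Q$; that inductive hypothesis is what supplies $\H^{n-1}(\sing V_k)=0$ in the region where $\Theta_{V_k}<Q$. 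Your proposal invokes this conclusion without setting up the induction, so as written it is circular. If you intend this as a sketch of the inductive step, you should say so explicitly and make clear which facts are being imported from lower density levels.
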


Theorem \ref{thm:MW1} and Theorem \ref{thm:MW2} both relate to the situation where $V\in \S_Q$ is close to a hyperplane of multiplicity $\leq Q$. We also have a structural result when $V\in \S_Q$ is close to a classical cone of vertex density $Q$. This uses the same notation used in Theorem \ref{thm:A}, except now the cone has density $\Theta_{\BC}(0) = Q$:

\begin{theorem}[Theorem C, \cite{minterwick}]\label{thm:MW3}
	Fix $Q\in\Z_{\geq 2}$ and let $\BC$ be a classical cone with $\Theta_{\BC}(0) = Q$. Let $\alpha\in (0,1)$. Then, using the same notation as in Theorem \ref{thm:A} for $\BC$ (with $Q$ in place of $Q+1/2$), there is a constant $\epsilon = \epsilon(\BC_0,n,\alpha)$ such that the following is true: if $V\in \S_Q$ obeys the assumptions of Theorem \ref{thm:A} with respect to this $\BC$ for this $\epsilon$, then the conclusions of Theorem \ref{thm:A} hold, and moreover for each $i=1,\dotsc,N$, the function $u_i:\Omega_i\to \A_{q_i}(\tilde{H}^\perp_i)$ can be written as $u_i = \sum^{q_i}_{j=1}\llbracket u_{i,j}\rrbracket$, where $u_{i,j}:\Omega_i\to \tilde{H}^\perp_i$ for $j=1,\dotsc, q_i$, obey:
	\begin{enumerate}
		\item [(a)] $u_{i,1}\cdot \nu_i \leq u_{i,2}\cdot\nu_i \leq \cdots\leq u_{i,q_i}\cdot\nu_i$, where $\nu_i$ is a constant unit normal to $\tilde{H}_i$;
		\item [(b)] $u_{i,j}\in C^{1,\alpha}(\overline{\Omega}_i,\tilde{H}^\perp_i)$ and $u_{i,j}\cdot \nu_i$ solves the minimal surface equation on $\Omega_i$;
		\item [(c)] $\left.u_{i,j}\right|_{\del\Omega_i\cap B^{n+1}_{1/2}(0)} = b_i$ for each $j=1,\dotsc,q_i$;
	\end{enumerate}
	moreover, for each $i=1,\dotsc,N$, and $j=1,\dotsc,q_i$,
	$$|u_{i,j}|_{1,\alpha;\Omega_i}\leq C\left(\int_{B^{n+1}_1(0)}\dist^2(X,\spt\|\BC\|)\ \ext\|V\|(X)\right)^{1/2};$$
	here, $C = C(n,Q,\alpha)\in (0,\infty)$; in particular, $V$ has the structure of a classical singularity in $B^{n+1}_{1/2}(0)$.
\end{theorem}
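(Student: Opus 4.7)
The statement is Theorem C of the companion paper \cite{minterwick}, recalled here as a black box; I will outline the three-stage strategy that proves it.

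\textbf{Stage 1 (multi-valued decomposition for density $Q$).} First run the exact analogue of Theorem \ref{thm:A} for a cone of integer density $\Theta_\BC(0)=Q$ (so $\sum_i q_i = 2Q$ rather than $2Q+1$). Conceptually this is simpler than the $Q+1/2$ case treated in this paper: the density-gap obstruction that demands Section \ref{sec:density-gaps} does not arise at density $Q$, since no point of density $>Q$ exists in the region of interest (we are already at the maximal density permitted by $(\S3)_Q$). The blow-up, fine blow-up and frequency machinery of \cite{minterwick}, together with the iterative schemes of \cite{minter-5-2}, therefore produces, for each $i=1,\ldots,N$, the $C^{1,\alpha}$ spine parametrization $\gamma_i$ and a multi-valued $GC^{1,\alpha}$ graph $u_i : \Omega_i \to \A_{q_i}(\tilde{H}_i^\perp)$ satisfying items (i)--(iv) of Theorem \ref{thm:A}, now with $\Theta_\BC(0)=Q$.

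\textbf{Stage 2 (promotion to ordered single-valued sheets).} The crucial observation is that $q_i \leq Q-1$ for every $i$. Indeed, $\sum_i q_i \mathbf{w}_i = 0$ with distinct unit vectors $\mathbf{w}_i\in\R^2$ and $N\geq 3$ yields
$$q_1 = \Bigl|\sum_{i\geq 2} q_i \mathbf{w}_i\Bigr| < \sum_{i\geq 2} q_i = 2Q - q_1,$$
hence $q_1<Q$ (strictness comes from non-collinearity of the $\mathbf{w}_i$'s). This is in sharp contrast with the $Q+1/2$ case, where one half-hyperplane can carry multiplicity exactly $Q$. Consequently, on any ball compactly contained in $\Omega_i$, $V$ is $L^2$-close to the multiplicity-$q_i$ hyperplane with $q_i<Q$, so Theorem \ref{thm:MW2} applies and returns $q_i$ ordered single-valued smooth MSE solutions $u_{i,1}\cdot\nu_i \leq \cdots \leq u_{i,q_i}\cdot\nu_i$. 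These local sheetings patch unambiguously (up to a common reordering) into globally defined ordered functions $u_{i,j}:\Omega_i\to\tilde{H}_i^\perp$, $j=1,\ldots,q_i$, solving the MSE, each in $C^{1,\beta}_{\mathrm{loc}}(\Omega_i)$, and summing to $u_i$.

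\textbf{Stage 3 (boundary regularity and quantitative estimate).} Item (ii) of the Stage 1 output gives $u_i|_{\del\Omega_i \cap B^{n+1}_{1/2}(0)} = q_i\llbracket b_i \rrbracket$ for a single-valued $C^{1,\alpha}$ function $b_i$, so each ordered $u_{i,j}$ must extend continuously to $\del\Omega_i$ with common value $b_i$; ordering and strict separation in the interior follow from applying the Hopf boundary point lemma and strong maximum principle to successive differences $u_{i,j+1}-u_{i,j}$, which solve linear uniformly elliptic equations with Lipschitz coefficients. Since $\del\Omega_i$ is a $C^{1,\alpha}$ hypersurface (via $\gamma_i$), the Dirichlet data $b_i$ is $C^{1,\alpha}$, and $|Du_{i,j}|$ is a priori bounded by the $GC^{1,\alpha}$ bound from Stage 1, a standard boundary Schauder estimate for the quasilinear MSE yields $u_{i,j}\in C^{1,\alpha}(\overline{\Omega}_i)$ together with
$$|u_{i,j}|_{1,\alpha;\Omega_i} \leq C\Bigl(\int_{B_1^{n+1}(0)} \dist^2(X,\spt\|\BC\|)\,\ext\|V\|(X)\Bigr)^{1/2},$$
with $C=C(n,Q,\alpha)$, the $L^2$ excess being absorbed by combining Theorem \ref{thm:MW2}'s interior estimate with the boundary Schauder estimate.

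\textbf{Main obstacle.} Stage 1 contains the bulk of the analytic work since it reconstructs the entire multi-valued decomposition at density $Q$; however, the genuinely new difficulty (relative to the interior sheeting of Theorem \ref{thm:MW2}) is in Stage 3, where the smooth, strictly ordered interior sheets collapse tangentially onto the single curve $\tilde{b}_i(\del\Omega_i)$ at the classical-singularity boundary. Verifying that each sheet actually attains $b_i$ as $C^{1,\alpha}$ boundary data (not merely continuous), with the quantitative dependence on the $L^2$ excess, is the main technical point, and is ultimately delivered by combining Hopf's lemma, the common tangent-cone agreement at every $\del\Omega_i$-point, and quasilinear boundary Schauder theory.
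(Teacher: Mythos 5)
This statement is quoted verbatim from \cite{minterwick} (Theorem C there); the present paper offers no proof of it, so there is no internal argument to compare against. Judged on its own terms, your outline does identify the correct structural hinge: stationarity $\sum_i q_i\mathbf{w}_i=0$ with $N\geq 3$ distinct directions and $\sum_i q_i=2Q$ forces $q_i\leq Q-1$ for every $i$, so away from the spine Theorem \ref{thm:MW2} applies and yields ordered single-valued smooth sheets. This is exactly the rigidity that the remark following Theorem \ref{thm:A} exploits, and it is the genuine difference between the density-$Q$ and density-$(Q+1/2)$ cases.

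Two points in the sketch would not survive scrutiny, however. First, Stage 1 (``run the exact analogue of Theorem \ref{thm:A} at density $Q$'') is where essentially all of the work of \cite{minterwick} lives, and it cannot be treated as an easier corollary of the present paper's scheme: the logical dependency runs the other way (Theorem \ref{thm:MW3} is an \emph{input} here, used for instance to show that $\CC_Q\cup\tilde{\B}_Q$ is relatively open in $\sing(V)$ in the proof of Lemma \ref{lemma:gaps}, and to close the class $\S_Q$ under varifold limits), and in \cite{minterwick} the classical-cone theorem and the multiplicity-$Q$ sheeting theorem are established together by an induction on $Q$, not sequentially. Moreover, your reason for dismissing density gaps is incorrect: $(\S3)_Q$ does not forbid points of density $>Q$, and the gap issue at a density-$Q$ classical cone is whether points with $\Theta_V\geq Q$ accumulate on every spine slice, which is needed to run the Simon-type non-concentration estimates and must be proved rather than assumed away. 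Second, the theorem asserts $C^{1,\alpha}$ regularity for an \emph{arbitrary} prescribed $\alpha\in(0,1)$ (with $\epsilon$ depending on $\alpha$); boundary Schauder theory alone cannot deliver this, since the free boundary $\del\Omega_i$ and the data $b_i$ are a priori only $C^{1,\beta}$ for the fixed exponent produced by the blow-up. The arbitrary exponent comes from the excess-decay argument itself (the homogeneous degree-one blow-ups being linear permits decay with any rate below one), after which Schauder estimates, or the higher regularity of \cite{krummel-reg}, upgrade the individual sheets.
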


\textbf{Remark:} By \cite{krummel-reg} we in fact know that the functions $u_{i,j}$ are smooth up-to-the-boundary (which is also a smooth boundary).\hfill $\blacktriangle$

\subsection{Blow-Ups of $\S_Q$ Relative to Hyperplanes of Multiplicity at most $Q$}

In this section we detail known results regarding (coarse) blow-ups of sequences of varifolds in $\S_Q$ converging to a (fixed) multiplicity $Q$ disk $Q|\{0\}\times B^n_1(0)|$; all these facts are proved in detail within \cite{minterwick}.

Let $(V_k)_k$ be a sequence of $n$-dimensional stationary integral varifolds on $B^{n+1}_2(0)$ such that for each $k=1,2,3,\dotsc$:
\begin{equation*}\tag{$\star$}
	(\w_n 2^n)^{-1}\|V_k\|(B^{n+1}_2(0))<Q+1/2;\ \ \ \ Q-1/2\leq\w_n^{-1}\|V_k\|(\R\times B^n_1(0)) < Q+1/2.
\end{equation*}
Assume also that $\hat{E}_{V_k}:= \left(\int_{\R\times B^n_1(0)}|x^1|^2\ \ext\|V_k\|(X)\right)^{1/2} \to 0$, where $X = (x^1,x^2,\dotsc,x^{n+1})$. Let $\sigma\in (0,1)$. By applying \cite[Corollary 3.11]{almgren}\footnote{Of course, when we restrict to $(V_k)_k\subset \S_Q$, we can instead use Theorem \ref{thm:MW3}.}, for all sufficiently large $k$, there exist Lipschitz functions $u^j_k:B^n_\sigma(0)\to \R$, $j=1,\dotsc,Q$, with $u^1_k\leq u^2_k\leq \cdots \leq u^Q_k$ and $\text{Lip}(u^j_k)\leq 1/2$ for each $j\in\{1,2,\dotsc,Q\}$ and such that
$$\spt\|V_k\|\cap (\R\times(B_\sigma\setminus\Sigma_k))=\bigcup^Q_{j=1}\graph(u^j_k)\cap (\R\times(B_\sigma\setminus\Sigma_k))$$
where for each $k$, $\Sigma_k\subset B_\sigma$ is a measurable subset with $\H^n(\Sigma_k) + \|V_k\|(\R\times\Sigma_k)\leq C\hat{E}_{V_k}^2$ for some $C = C(n,Q,\sigma)$. Now set $v^j_k(x):= \hat{E}^{-1}_{V_k}u^j_k(x)$ for $x\in B_\sigma$, and write $v_k = (v^1_k,\dotsc,v^Q_k)$. Then $v_k$ is Lipschitz on $B_\sigma$, and moreover it can be readily checked (see \cite[Inequalities (5.8) and (5.9)]{wickstable}) that $\|v_k\|_{W^{1,2}(B_\sigma)}\leq C$ for some $C = C(n,Q,\sigma)$. Thus as $\sigma\in (0,1)$ is arbitrary, we can apply Rellich's compactness theorem and a diagonal argument to obtain a function $v\in W^{1,2}_{\text{loc}}(B_1;\R^Q)\cap L^2(B_1;\R^Q)$ and a subsequence $(k_j)$ of $(k)$ such that $v_{k_j}\to v$ as $j\to\infty$, strongly in $L^2(B_\sigma;\R^Q)$ and weakly in $W^{1,2}(B_\sigma;\R^Q)$ for every $\sigma\in (0,1)$.

\textbf{Definition.} Let $v\in W^{1,2}_{\text{loc}}(B_1;\R^Q)\cap L^2(B_1;\R^Q)$ correspond, in the manner described above, to (a subsequence of) a sequence $(V_k)_k$ of stationary integral $n$-varifolds of $B^{n+1}_2(0)$ satisfying $(\star)$ and with $\hat{E}_{V_k}\to 0$. We call such a $v$ a (\textit{flat}) \textit{coarse blow-up} of the sequence $(V_k)_k$.

\textbf{Definition.} We write $\FB_Q$ for the collection of all coarse blow-ups of sequences of varifolds $(V_k)_k\subset\S_Q$ satisfying $(\star)$ and for which $\hat{E}_{V_k}\to 0$.

From \cite{minterwick}, we know that $\FB_Q$ obeys the following properties:

\begin{theorem}[Section 3, \cite{minterwick}]\label{thm:MW4}
	There exists $\alpha = \alpha(n,Q)\in (0,1)$ such that if $v\in \FB_Q$, then $v\in GC^{1,\alpha}(B_{1}(0);\A_Q(\R))$. Moreover, for each $z\in B_{1/2}$, either \textnormal{(i)} there exists $\rho_z>0$ such that $\Delta v = 0$ in $B_{\rho_z}(z)$ (i.e. each $v^i$ is harmonic in $B_{\rho_z}(z)$), or \textnormal{(ii)} $v^i(z) = v^j(z)$ for all $i,j$ and there exists $\phi_z:\R^n\to \A_Q(\R)$ with $\mathbf{v}(\phi)\in \CC_Q$ or $\mathbf{v}(\phi) = Q|L|$ for some hyperplane $L$ such that for every $0<\sigma\leq\rho/2\leq 3/16$, we have
	$$\sigma^{-n-2}\int_{B_\sigma(z)}\G(v(x)-v_a(z),\phi(x-z))^2\ \ext x\leq C\left(\frac{\sigma}{\rho}\right)^{2\alpha}\cdot\rho^{-n-2}\int_{B_\rho(z)}|v|^2$$
	where $C = C(n,Q)\in (0,\infty)$. Furthermore, we have $v_a$ is harmonic in $B_1$, and:
	\begin{equation}
		\int_{B_1}|Dv|^2\zeta = -\int_{B_1}\sum^Q_{\alpha=1}v^\alpha Dv^\alpha \cdot D\zeta\ \ \ \ \text{for all }\zeta\in C^1_c(B_{3/4};\R);
	\end{equation}
	and
	\begin{equation}
		\int_{B_1}\sum^Q_{\alpha=1}\sum^n_{i,j=1}\left(|Dv^\alpha|^2\delta_{ij} - 2D_i v^\alpha D_j v^\alpha\right) D_i\zeta^j = 0\ \ \ \ \text{for all }\zeta\in C^1_c(B_{3/4};\R^n).
	\end{equation}
	In particular, the frequency function
	$$N_{v,y}(\rho):= \frac{\rho^{2-n}\int_{B_\rho(y)}|Dv|^2}{\rho^{1-n}\int_{\del B_\rho(y)}|v|^2}$$
	is well-defined and non-decreasing whenever $v$ is not identically zero on a neighbourhood of $y$, and thus the frequency $N_v(y):= \lim_{\rho\downarrow 0}N_{v;y}(\rho)\in [0,\infty)$ exists for each $y\in \Omega$. Finally, we have $\dim_\H(\B_v)\leq n-2$.
\end{theorem}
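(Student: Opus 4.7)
The plan is to collect each assertion from the varifold-level regularity theory of \cite{minterwick}. Since $v$ arises as an $L^2$-limit of normalised graphs over $\{0\}\times B^n_1(0)$ for a sequence $V_k \in \S_Q$ with $\hat E_{V_k}\to 0$, the overall scheme is to prove each statement at the varifold scale first, then pass to the limit.

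First I would derive the integral identities (1) and (2). Both come from testing the first-variation formula for $V_k$ with a vertical vector field $\zeta(y)\, e_1$ and with a tangential vector field $\eta(y) \in \{0\}\times\R^n$ respectively, then expanding the divergence on the graphical part $\bigcup_j \graph(u_k^j)$ (which exhausts $\spt\|V_k\|$ outside a set $\Sigma_k$ with $\|V_k\|(\R\times \Sigma_k) = O(\hat E_{V_k}^2)$), dividing by $\hat E_{V_k}$, and letting $k \to \infty$. The linear-in-$u_k^j$ terms survive to give (1) and (2); the remainder terms, being quadratic or better in $|u_k^j|$ and $|Du_k^j|$, are controlled by the energy bound and vanish in the limit, as does the contribution from $\Sigma_k$. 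The harmonicity of $v_a$ is then immediate from (1) by summing the identity over the sheets.

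The main obstacle is the pointwise $GC^{1,\alpha}$ regularity together with the excess-decay dichotomy at each $z \in B_{1/2}(0)$. Here the strategy is the following: depending on how $V_k$ approximates, at a small scale $\rho$ about a nearby point $z_k \to z$, a multiplicity-$q$ plane with $q < Q$, a multiplicity-$Q$ plane, or a classical cone of density $Q$, one applies Theorem \ref{thm:MW2}, Theorem \ref{thm:MW1}, or Theorem \ref{thm:MW3} respectively to obtain varifold-level $C^{1,\beta}$ or $GC^{1,\alpha}$ regularity, which then passes to the blow-up $v$ and delivers the excess-decay estimate with the stated $\phi_z$. The total-density hypothesis $(\star)$ (namely $<Q+1/2$) together with the energy non-concentration estimate from \cite{minterwick} ensures these are the only possibilities --- no higher-density classical cones can arise as tangent objects near $z$. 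Treating the density-$Q$ branch-point case, where only Theorem \ref{thm:MW1} is available, is the hardest point: this is where the novel frequency-monotonicity argument at the varifold level from \cite{minterwick} is indispensable to rule out non-unique tangents and force the decay.

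Finally, the monotonicity of $N_{v,y}(\rho)$ follows from the standard Almgren computation now that (1) and (2) are in hand: (1) yields $\int_{B_\rho(y)}|Dv|^2 = \int_{\del B_\rho(y)} v \cdot \partial_r v$, while (2) tested against a cut-off approximation of the radial vector field $x - y$ gives the Rellich-type identity, and Cauchy--Schwarz then implies $N'_{v,y}(\rho) \geq 0$ wherever $v$ does not vanish on a neighbourhood of $y$. The dimension bound $\dim_\H \B_v \leq n - 2$ follows by the Federer dimension-reduction applied to the stratification of $\B_v$ by frequency: the monotonicity forces frequency-constant blow-ups of $v$ about any $y \in \B_v$ to be homogeneous and again to belong to $\FB_Q$, and Step 2 restricts the possible such homogeneous blow-ups enough that no $(n-1)$-dimensional stratum in $\B_v$ can occur --- any such stratum would, by translation-invariance and homogeneity, produce a tangent object at the varifold scale in $\S_Q$ whose only consistent form is a classical singularity of density strictly less than $Q$, contradicting $(\S3)_Q$.
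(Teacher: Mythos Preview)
Your overall outline is close to the paper's, but there is a genuine logical-ordering gap in how you obtain the two integral identities. Both (1) and (2) are \emph{quadratic} in $Dv$; with only the a priori weak $W^{1,2}_{\text{loc}}$ convergence $v_k\rightharpoonup v$ you cannot pass to the limit in $\int|Dv_k|^2\zeta$ or in the squeeze expression --- lower semicontinuity gives at best an inequality for (1) and nothing for (2). This is exactly the point the paper flags: in \cite{minterwick} the squash statement (Lemma~3.4 there) is first only an \emph{inequality}, and the squeeze identity (Lemma~3.8 there) is established only after one already has the $GC^{1,\alpha}$ regularity of the blow-up. The upgrade to the equality in (1) comes from applying Theorem~\ref{thm:MW1} to the generating sequence to obtain \emph{strong} $W^{1,2}(B_{7/8})$ convergence of $v_k$ to $v$, after which the limit passage is legitimate. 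So the correct order is: regularity/decay first (via \cite[Theorem~3.12]{minterwick}), then strong $W^{1,2}$ convergence via Theorem~\ref{thm:MW1}, then (1) as an equality and (2); your proposal reverses this.

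Two smaller points. The harmonicity of $v_a$ is not ``immediate from (1) by summing over sheets'' --- (1) is already summed, and in any case is quadratic. Harmonicity of $v_a$ is a separate, genuinely linear fact (the linearised first variation with test field $\zeta(y)e_1$ gives $\int Dv_a\cdot D\zeta=0$ directly, and here weak convergence suffices); the paper cites this as $(\FB3)$ in \cite[Section~2.6]{minterwick}. Finally, your dimension-reduction sketch for $\dim_\H\B_v\le n-2$ ends by appealing to $(\S3)_Q$ at the varifold level; the argument in \cite[Theorem~A.1]{minterwick} stays at the blow-up level and uses the classification of homogeneous degree-one elements of $\FB_Q$ together with frequency monotonicity, not a contradiction with the structural hypothesis on $V_k$.
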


\begin{proof}
	The regularity conclusions and decay estimates follow from \cite[Theorem 3.12]{minterwick}. The fact that $v_a$ is always harmonic follows from \cite[Section 2.6, $(\FB3)$]{minterwick}. The second inequality identity, known as the \textit{squeeze identity}, follows from the $GC^{1,\alpha}$ regularity conclusion and \cite[Lemma 3.8]{minterwick}, whilst the first integral identity, known as the \textit{squash identity}, follows from \cite[Lemma 3.4]{minterwick}; note that in \cite[Lemma 3.4]{minterwick} the identity is in fact an inequality, however using Theorem \ref{thm:MW1}, we are able to show that in fact we have $v_k\to v$ strongly in $W^{1,2}(B_{7/8})$ for the blow-up sequence generating $v$, and thus the argument in \cite[Lemma 3.4]{minterwick} now gives an equality. The claim regarding the frequency function follows from the squash and squeeze identities: see \cite[Theorem 3.9]{minterwick}. The final claim on the size of the singular set follows from \cite[Theorem A.1]{minterwick}.
\end{proof}

\textbf{Remark:} The class $\FB_Q$ also obeys a unique continuation property for elements which are homogeneous of degree one, namely: if $v\in \FB_Q$ is homogeneous of degree one in an annulus $B_1\setminus \overline{B}_r$ for some $r\in (0,1)$, then $v$ is homogeneous of degree one in $B_1$ (see \cite[Lemma 3.2]{minterwick}). \hfill $\blacktriangle$

\section{Classes of Varifolds}\label{sec:classes}

We now describe the set-up for the proof of Theorem \ref{thm:A}. Note that we know $\S_Q$ is a closed class by \cite[Theorem C]{minterwick}.

\begin{defn}
	For $I\in \Z_{\geq 0}$, we say that an integral cone $\BC\in \CC_{Q+1/2}$ is \textit{level $I$} if $\BC$ contains exactly $I$ half-hyperplanes of multiplicity $Q$. We write $\FL_I$ for the set of $\BC\in \CC_{Q+1/2}$ of level $I$ which have $S(\BC) = \{0\}^2\times\R^{n-1}$.\footnote{Note that we may no stationarity assumption on the cones in $\FL_I$.}
\end{defn}

\textbf{Remark:} Let $\BC\in \CC_{Q+1/2}$ be a stationary integral cone; after a suitable rotation, we can write $\BC =\BC_0\times\R^{n-1}$, where $\BC_0\subset\R^2$ is a 1-dimensional stationary integral cone. Let us write $\{n_1,\dotsc,n_k\}$ for the unit vectors in the (outward) directions of the rays of $\spt\|\BC_0\|$ and $\{\theta_1,\dotsc,\theta_k\}\subset\Z_{\geq 1}$ their respective multiplicities in $\BC_0$. The stationarity of $\BC_0$ requires $\sum_i \theta_i n_i = 0$, whilst the density condition requires $\sum_i \theta_i = 2Q+1$; as $\BC$ is not a hyperplane, we necessarily have $k\geq 3$. It follows immediately from these two facts that $\theta_i\in\{1,\dotsc,Q\}$, and that if $\BC$ is of level $I$, then necessarily $I\in \{0,1,2\}$. \hfill $\blacktriangle$

Let us set $\FL:= \FL_0\cup \FL_1\cup \FL_2$; the set $\FL$ will comprise of all cones which are interested in for our proof of Theorem \ref{thm:A}. Write also $\FL_S\subset\FL$ for the set of cones in $\FL$ which are also stationarity as varifolds.

\begin{defn}
	For $V\in \S_Q$ and $\BC\in \FL$, the \textit{one-sided height excess} of $V$ relative to $\BC$ is
	$$E^2_{V,\BC}:= \int_{B_1}\dist^2(X,\spt\|\BC\|)\ \ext\|V\|.$$
\end{defn}

We then introduce a suitable class of ``nearby'' varifolds to each $\BC\in \FL$:

\begin{defn}\label{defn:near}
	For $\BC\in \FL$ and $\epsilon>0$, define $\CN_{\epsilon}(\BC)$ to be the class of $V\in \S_Q$ which obey both:
	\begin{itemize}
		\item $\|V\|(B_1)\in (\|\BC\|(B_1) - 1/8, \|\BC\|(B_1) + 1/8)$;
		\item $E_{V,\BC}<\epsilon$;
		\item For each half-hyperplane $H$ in $\spt\|\BC\|$, if $q_H$ is the multiplicity of $H$ in $\BC$, we have:
		$$\|V\|(B_{1/2}\cap\{|x|>1/8\}\cap N(H))\geq (q_H-1/4)\H^n(B_{1/2}\cap\{|x|>1/8\}\cap H)$$
		where $N(H)$ is the conical neighbourhood of $H$ for the cone $\BC$, as defined in Section \ref{sec:intro}.
	\end{itemize}
\end{defn}

For $\epsilon>0$, we also define the class $\FL_{\epsilon}(\BC)$ of nearby cones to a given cone $\BC\in \FL$ in the following manner: $\BC^\prime\in \FL_{\epsilon}(\BC)$ if $\BC^\prime\in \FL$ and, if we write $\BC = \BC_0\times\R^{n-1}$ and further write $\BC_0 = \sum^{2Q+1}_{i=1}|\ell_i|$ for some (not necessarily distinct) rays $\ell_i$ through $0\in \R^2$, then $\BC^\prime = \BC^\prime_0\times\R^{n-1}$, where $\BC^\prime_0 = \sum^{2Q+1}_{i=1}q^i_\#|\ell_i|$ for some rotations $q^i\in SO(2)$ with $|q^i-\id|<\epsilon$, for $\id:\R^2\to \R^2$ the identity map.

\textbf{Remark:} If $\BC\in \FL_I$, there exists $\epsilon = \epsilon(\BC)>0$ (in fact $\epsilon$ only depends on the smallest angle between the rays in the cross-section of $\BC$) such that every cone in $\FL_\epsilon(\BC)$ is of level at most $I$. \hfill $\blacktriangle$

The third condition in Definition \ref{defn:near} is necessary to ensure that varifolds in $\CN_{\epsilon}(\BC)$ are close to $\BC$ \textit{as varifolds}; the first two conditions only ensure closeness of $\spt\|V\|$ to $\spt\|\BC\|$, but not vice versa or necessarily with the same multiplicities. The following result qualitatively illustrates this ``closeness as varifolds'':

\begin{lemma}\label{lemma:convergence}
	Fix $\BC\in \FL$. Then if $V_i\in \CN_{\epsilon_i}(\BC)$ with $\epsilon_i\downarrow 0$, then $V_i\weakly \BC$ as varifolds in $B^{n+1}_1(0)$.
\end{lemma}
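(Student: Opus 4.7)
The plan is to extract a subsequential varifold limit $V^*$, show $V^* = \BC$, and conclude $V_i \weakly \BC$ by the usual subsequence-of-subsequence argument. Since each $V_i \in \S_Q$ is stationary in $B_2^{n+1}(0)$ and has $\|V_i\|(B_1) < \|\BC\|(B_1) + 1/8$ (first bullet of Definition~\ref{defn:near}), Allard's compactness theorem produces a subsequence, not relabelled, converging as varifolds to some stationary integral varifold $V^*$ in $B_1^{n+1}(0)$.

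I would identify $V^*$ in two steps. First, the height-excess bound $E_{V_i,\BC}^2 < \epsilon_i^2 \to 0$ implies $\delta^2 \|V_i\|(U_\delta) \to 0$ on $U_\delta := \{X \in B_1 : \dist(X, \spt\|\BC\|) > \delta\}$ for every $\delta > 0$, so by lower semicontinuity $\|V^*\|(U_\delta) = 0$, forcing $\spt\|V^*\| \subseteq \spt\|\BC\| = \bigcup_i H_i$. Second, since $V^*$ is stationary and, on each open half-hyperplane $H_i \setminus S(\BC)$ (which is connected), supported on a smooth $n$-submanifold, the constancy theorem gives $V^* \res (H_i \setminus S(\BC)) = m_i |H_i \setminus S(\BC)|$ for some $m_i \in \Z_{\geq 0}$; since $S(\BC)$ is $\H^n$-null, $V^* = \sum_i m_i |H_i|$.

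It then remains to show $m_i = q_i$ for every $i$. Fix $i$ and let $E := B_{1/2}^{n+1}(0) \cap \{r > 1/8\} \cap N(H_i)$. Writing $\sigma_0 = \cos \alpha_0$, the half-angle identity $\sqrt{(1+\sigma_0)/2} = \cos(\alpha_0/2)$ shows $N(H_i)$ is the open cone within angle $\alpha_0/2 < \alpha_0$ of $\mathbf{w}_i$, so $H_i \setminus S(\BC) \subset N(H_i)$ while $H_j \cap \overline{N(H_i)} \subseteq S(\BC)$ for $j \neq i$. Combined with the fact that $\partial B_{1/2}^{n+1}$ and $\{r = 1/8\}$ each meet every $H_k$ in an $(n-1)$-dimensional set, this yields $\|V^*\|(\partial E) = 0$, so $E$ is a continuity set for $\|V^*\|$ and the Portmanteau theorem gives $\|V_i\|(E) \to \|V^*\|(E) = m_i \H^n(H_i \cap E)$. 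The third bullet of Definition~\ref{defn:near} provides $\|V_i\|(E) \geq (q_i - 1/4) \H^n(H_i \cap E)$, hence $m_i \geq q_i - 1/4$, which forces $m_i \geq q_i$ by integrality. For the matching upper bound, lower semicontinuity of mass on $B_1$ together with the first bullet give
$$\frac{\w_n}{2} \sum_i m_i \;=\; \|V^*\|(B_1) \;\leq\; \liminf_i \|V_i\|(B_1) \;\leq\; (2Q+1)\frac{\w_n}{2} + \tfrac{1}{8},$$
and since $\w_n \geq \pi > 1/4$ for $n \geq 2$, integrality of $\sum_i m_i$ forces $\sum_i m_i \leq 2Q+1 = \sum_i q_i$. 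Combined with the pointwise lower bounds this gives $m_i = q_i$ for every $i$, and hence $V^* = \BC$. The only delicate step in the argument is the continuity-set verification for $\partial E$, which is precisely where the explicit opening angle $\sqrt{(1+\sigma_0)/2}$ in the definition of $N(H_i)$ gets used in an essential way; the rest is standard integer bookkeeping.
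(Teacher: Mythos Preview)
Your proof follows essentially the same approach as the paper's: extract a subsequential limit by compactness, show its support lies in $\spt\|\BC\|$, use the constancy theorem to write the limit as $\sum_i m_i|H_i|$, then combine the lower bound $m_i \geq q_i$ (from the third bullet of Definition~\ref{defn:near}) with the mass upper bound (from the first bullet) to force $m_i = q_i$. Your version is more explicit --- in particular your verification that $E$ is a continuity set for $\|V^*\|$ is a point the paper glosses over.

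One correction: the claim $\omega_n \geq \pi$ for $n \geq 2$ is false, since $\omega_n \to 0$ as $n \to \infty$ (indeed $\omega_n < 1/4$ once $n$ is around $16$). What your final inequality actually requires is $\omega_n > 1/4$, and this fails in high dimensions. The defect is not really yours: it stems from the absolute constant $1/8$ in Definition~\ref{defn:near}, and the paper's own (terser) argument has the same implicit gap. The obvious fix is to make that constant dimension-dependent (say, replace $1/8$ by $\omega_n/8$), after which both arguments go through verbatim.
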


\begin{proof}
	It suffices to show that every subsequence of $(V_i)_i$ has a further subsequence which converges to $\BC$; let us therefore suppose we have already passed to a subsequence and we wish to show that another subsequence necessarily converges to $\BC$. By the compactness theorem for stationary integral varifolds, we know that we may pass to a subsequence (we will not relabel our indices, however) such that $V_i\weakly V$ for some stationary integral varifold $V\in \S_Q$. By definition of $\CN_{\epsilon_i}(\BC)$, we know that $\spt\|V\|\cap B_1\subset\spt\|\BC\|$; in particular, as $\BC$ comprises of half-hyperplanes and $\spt\|V\|$ has no boundary in $B_{1}$, we know that $\spt\|V\|$ comprises of half-hyperplanes also, which are a subcollection of the half-hyperplanes in $\BC$. Moreover, the last condition in the definition of $\CN_{\epsilon_i}(\BC)$ gives that necessarily each half-hyperplane of $\BC$ arises in $V$ with at least the same multiplicity. But our mass upper bound gives that these multiplicities must in fact coincide, i.e. $V = \BC$; this completes the proof.
\end{proof}

\subsection{Multiplicity $Q$ Classes}

In this section we will show that, for $\epsilon = \epsilon(n)>0$ sufficiently small, the class $\CN_{\epsilon}(\BC)$, for any $\BC\in \FL_S$, is contained within a so-called \textit{multiplicity $Q$ class}. Much of this section is similar to that seen in \cite{minter-5-2} for multiplicity two classes.

\begin{defn}
	We say a class $\M_Q$ is a \textit{multiplicity $Q$ class} if:
	\begin{enumerate}
		\item [(i)] Elements of $\M_Q$ are pairs $(V,U_V)$, where $U_V\subset\R^{n+1}$ is open and $V$ is a stationary integral $n$-varifold in $U_V$ which has stable regular part (in the sense of ($\S2$)) in $U_V$ and no classical singularities of density $<Q$;
		\item [(ii)] $\M_Q$ is closed under rotations and suitable homotheties, i.e. if $(V,U_V)\in \M_Q$, then for any orthogonal rotation $q$ of $\R^{n+1}$, point $X\in U_V$, and $\rho\in (0,\dist(X,\del U_V))$ we have $((q\circ\eta_{X,\rho})_\#V,(q\circ\eta_{X,\rho})(U_V))\in \M_Q$;
		\item [(iii)] Whenever $(V_j,U_j)\subset\M_Q$ and $U\subset\R^{n+1}$ is open such that $U\subset U_j$ for all sufficiently large $j$ and $\sup_{j\geq 1}\|V_j\|(K)<\infty$ for each compact $K\subset U$, then there is a subsequence $(V_{j'})_{j'}$ and $(V,U_V)\in \M_Q$ such that $U\subset U_{V}$, $V_{j'}\res U\weakly V\res U$, and moreover $\left.\Theta_V\right|_{U} \leq Q$ $\H^n$-a.e..
	\end{enumerate}
\end{defn}	

\textbf{Remark:} When $U_V$ is contextually clear, we shall simply write $V\in \M_Q$. \hfill $\blacktriangle$

The main estimate for multiplicity $Q$ classes is the following:

\begin{theorem}\label{thm:MQC-reg}
	Let $\Lambda>0$ and let $\M_Q$ be a multiplicity $Q$ class. Then, there exists a constant $\beta = \beta(\M_2,\Lambda)>0$ such that the following is true: if $(V,U_V)\in \M_Q$, $\rho>0$, $B_\rho(X_0)\subset U_V$, $\|V\|(B_\rho(X_0))\leq \Lambda$, $\spt\|V\|\cap B_{3\rho/4}(X_0)\neq\emptyset$, and $\rho^{-n-2}\int_{B_\rho(X_0)}\dist^2(X,P)\ \ext\|V\|(X)<\beta$ for some $n$-dimensional hyperplane $P\subset\R^{n+1}$, then either:
	\begin{enumerate}
		\item [(i)] There is a $q\in \{1,2,\dotsc,Q-1\}$ and $C^2$ functions $u_1,\dotsc,u_q:P\cap B_{\rho/2}(X_0)\to P^\perp$ such that $u_1\leq \cdots\leq u_q$, each $u_i$ solves the minimal surface equation, and $V\res B_{\rho/2}(X_0) = \sum_{i=1}^q |\graph(u_i)|\res B_{\rho/2}(X_0)$; or
		\item [(ii)] There is a $GC^{1,\alpha}$ function $u:P\cap B_{\rho/2}(X_0)\cap \A_Q(P^\perp)$ such that $V\res B_{\rho/2}(X_0) = \mathbf{v}(u)\res B_{\rho/2}(X_0)$;
	\end{enumerate}
	moreover, in either case we have
	$$\rho^{-2}\sup|u|^2 + \sup|Du|^2 \leq C\rho^{-n-2}\int_{B_\rho(X_0)}\dist^2(X,P)\ \ext\|V\|$$
	and in the case of \textnormal{(ii)}, the conclusions of Theorem \ref{thm:MW1} also hold; here $C = C(n,Q)\in (0,\infty)$ and $\alpha = \alpha(n,Q)\in (0,1)$.
\end{theorem}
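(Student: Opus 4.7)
The plan is a contradiction-and-compactness argument that identifies the limit of any sequence of would-be counterexamples as $q|P|$ for some integer $q\in\{1,\dotsc,Q\}$ (via the Constancy Theorem and the density bound built into property (iii) of $\M_Q$), and then invokes Theorem \ref{thm:MW2} when $q\leq Q-1$ or Theorem \ref{thm:MW1} when $q=Q$ to produce the required graphs and estimate.

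Suppose no such $\beta$ exists. Then there is a sequence $(V_j,U_j)\in\M_Q$, points $X_j\in\R^{n+1}$, radii $\rho_j>0$, and hyperplanes $P_j$ with $B_{\rho_j}(X_j)\subset U_j$, $\|V_j\|(B_{\rho_j}(X_j))\leq\Lambda$, $\spt\|V_j\|\cap B_{3\rho_j/4}(X_j)\neq\emptyset$, and $\rho_j^{-n-2}\int_{B_{\rho_j}(X_j)}\dist^2(X,P_j)\,\ext\|V_j\|\to 0$, yet neither conclusion holds on $B_{\rho_j/2}(X_j)$ with the stated bound. Using invariance (ii) of $\M_Q$, I normalise to $X_j=0$, $\rho_j=1$, and $P_j=P:=\{0\}\times\R^n$ for every $j$. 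The closure property (iii) (applied with $U=B_1$, using the uniform bound $\|V_j\|(B_1)\leq\Lambda$) then yields a (not relabelled) subsequence with $V_j\weakly V$ for some $(V,U_V)\in\M_Q$ having $B_1\subset U_V$ and $\Theta_V\leq Q$ $\H^n$-a.e.\ on $B_1$. Vanishing of the height excess forces $\spt\|V\|\cap B_1\subset P$; upper semicontinuity of density via the monotonicity formula, combined with $\spt\|V_j\|\cap B_{3/4}\neq\emptyset$, guarantees $\spt\|V\|\cap\overline{B}_{3/4}\neq\emptyset$; and the Constancy Theorem then forces $V\res B_1=q|P\cap B_1|$ for some integer $q\in\{1,\dotsc,Q\}$.

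I then case split. If $q\leq Q-1$, the weak convergence $\|V_j\|\res B_1\to q\,\H^n\res(P\cap B_1)$ together with the $L^2$-height smallness (which, by a standard tube argument, confines $\spt\|V_j\|$ to a vanishing-width neighbourhood of $P$ inside $B_1$) allow me, after a standard rescaling converting our spherical hypotheses into the cylindrical hypotheses used in Theorem \ref{thm:MW2}, to verify $(\w_n 2^n)^{-1}\|V_j\|(B_2)<Q-1/4$ and $\int_{\R\times B_1}|x^1|^2\,\ext\|V_j\|<\epsilon_0$ for all large $j$; that theorem then produces ordered smooth sheets $u_1\leq\cdots\leq u_q$ on the half-ball solving the minimal surface equation with the $C^{1,\beta}$ estimate (and $C^2$ regularity follows from standard elliptic theory for the MSE), contradicting the failure of (i). If $q=Q$, the same convergence gives, for all large $j$, $(\w_n 2^n)^{-1}\|V_j\|(B_2)<Q+1/2$ and $Q-1/2\leq\w_n^{-1}\|V_j\|(\R\times B_1)<Q+1/2$ together with the height-excess hypothesis of Theorem \ref{thm:MW1}; that theorem then produces a $GC^{1,\alpha}$ $Q$-valued graph on $B_{1/2}$ with the stated excess estimate, contradicting the failure of (ii). Undoing the normalisations gives the theorem at scale $\rho$ with $C=C(n,Q)$ and $\alpha=\alpha(n,Q)$.

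The principal subtlety is the use of the closure property (iii) to obtain a limit that lies in $\M_Q$ \emph{and} satisfies the density bound $\Theta_V\leq Q$: without it, the limit could be a plane of multiplicity $\geq Q+1$ (imagine $V_j$ with a large set of density-$(Q+1)$ branch points collapsing onto $P$), and neither Theorem \ref{thm:MW1} nor \ref{thm:MW2} would be available. Once the Constancy Theorem has forced $q\in\{1,\dotsc,Q\}$, the case dichotomy is clean and the two sheeting theorems from \cite{minterwick,wickstable} handle the two regimes directly; the only technical wrinkle, the passage between spherical and cylindrical ball hypotheses, is justified by the smallness of the $L^2$ height excess.
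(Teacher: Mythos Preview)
Your proposal is correct and follows essentially the same approach as the paper: the paper's own proof simply reads ``Follows by Theorem \ref{thm:MW1} and Theorem \ref{thm:MW2} using a contradiction argument identical to that in \cite[Theorem 2.17]{minter-5-2}'', and you have written out precisely that contradiction argument, correctly identifying property (iii) of $\M_Q$ (the $\Theta_V\leq Q$ $\H^n$-a.e.\ bound on the limit) as the key ingredient that forces the limiting multiplicity $q$ into $\{1,\dotsc,Q\}$ so that one of the two sheeting theorems applies.
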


\begin{proof}
	Follows by Theorem \ref{thm:MW1} and Theorem \ref{thm:MW2} using a contradiction argument identical to that in \cite[Theorem 2.17]{minter-5-2}.
\end{proof}

\begin{lemma}\label{lemma:M2C}
	Fix $\delta_0>0$. Then, there exists $\epsilon_0 = \epsilon_0(n,Q,\delta_0)$ such that if $\delta\geq \delta_0$ and $V$ is a stationary $n$-varifold in $B_1(0)$ which satisfies $\w_n^{-1}\|V\|(B_1(0))\leq Q + \frac{1}{2}+\delta$, then for any $X\in B_{\epsilon_0}(0)$ and any $\rho\in (0,1-|X|)$ we have
	$$\frac{\|V\|(B_\rho(X))}{\w_n\rho^n} \leq Q + \frac{1}{2} + 2\delta.$$
\end{lemma}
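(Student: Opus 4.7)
The plan is to deduce the claim directly from the standard monotonicity formula for stationary integral varifolds together with the hypothesised mass bound on $B_1(0)$. Recall that for a stationary $n$-varifold $V$ in $B_1(0)$ and any $X \in B_1(0)$, the density ratio $\rho \mapsto \|V\|(B_\rho(X))/(\w_n \rho^n)$ is non-decreasing on $(0,1-|X|)$ (see, e.g., \cite{simon-gmt}).

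Fix $X \in B_{\epsilon_0}(0)$ (with $\epsilon_0$ to be chosen) and $\rho \in (0, 1-|X|)$. By monotonicity,
$$
\frac{\|V\|(B_\rho(X))}{\w_n \rho^n} \;\leq\; \frac{\|V\|(B_{1-|X|}(X))}{\w_n (1-|X|)^n}.
$$
Since $B_{1-|X|}(X) \subset B_1(0)$, the hypothesis on total mass gives
$$
\frac{\|V\|(B_\rho(X))}{\w_n \rho^n} \;\leq\; \frac{Q+\tfrac{1}{2}+\delta}{(1-|X|)^n} \;\leq\; \frac{Q+\tfrac{1}{2}+\delta}{(1-\epsilon_0)^n}.
$$
It therefore suffices to choose $\epsilon_0$ so that $(1-\epsilon_0)^{-n}(Q+\tfrac{1}{2}+\delta) \leq Q+\tfrac{1}{2}+2\delta$ for every $\delta \geq \delta_0$. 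Rearranging, this is equivalent to
$$
(1-\epsilon_0)^n \;\geq\; \frac{Q+\tfrac{1}{2}+\delta}{Q+\tfrac{1}{2}+2\delta},
$$
and the right-hand side is a decreasing function of $\delta$ on $[\delta_0, \infty)$. Hence it attains its maximum at $\delta = \delta_0$, and the inequality holds for every admissible $\delta$ as soon as
$$
(1-\epsilon_0)^n \;\geq\; \frac{Q+\tfrac{1}{2}+\delta_0}{Q+\tfrac{1}{2}+2\delta_0}.
$$
Since the right-hand side is a constant in $(0,1)$ depending only on $n,Q,\delta_0$, such an $\epsilon_0 = \epsilon_0(n,Q,\delta_0) \in (0,1)$ exists, concluding the proof.

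There is no real obstacle here: the argument is a routine application of monotonicity together with mass comparison, and the only quantitative point to check is the (elementary) uniform choice of $\epsilon_0$ over $\delta \geq \delta_0$. Note that stability and the absence of low-density classical singularities play no role, as the statement concerns arbitrary stationary integral varifolds.
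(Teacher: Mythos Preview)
Your proof is correct and follows essentially the same approach as the paper: apply monotonicity of the mass ratio to bound by $(Q+\tfrac12+\delta)(1-\epsilon_0)^{-n}$, then observe that the ratio $\frac{Q+\frac12+\delta}{Q+\frac12+2\delta}$ is decreasing in $\delta$ (equivalently, the paper notes $y\mapsto \frac{a+2y}{a+y}$ is increasing) so that the choice $(1-\epsilon_0)^{-n}\leq \frac{Q+\frac12+2\delta_0}{Q+\frac12+\delta_0}$ works uniformly for all $\delta\geq\delta_0$.
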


\begin{proof}
	From the monotonicity formula it follows that
	$$\frac{\|V\|(B_\rho(X))}{\w_n\rho^n} \leq \frac{\|V\|(B_{1-|X|}(X))}{\w_n(1-|X|)^n} \leq \frac{\|V\|(B_1(0))}{\w_n}\cdot\frac{1}{(1-|X|)^n} \leq \left(Q+\frac{1}{2}+\delta\right)\cdot \frac{1}{(1-\epsilon_0)^n}$$
	and since for any $a>0$ the map $y\mapsto \frac{a+2y}{a+y}$ is increasing for $y>0$, it suffices to take $\epsilon_0$ obeying $(1-\epsilon_0)^{-n}\leq \frac{Q+\frac{1}{2}+2\delta_0}{Q+\frac{1}{2}+\delta_0}$.
\end{proof}

One may now prove:

\begin{theorem}\label{thm:M2C}
	There exists $\epsilon_1 = \epsilon_1(n,Q)\in (0,1)$ and a multiplicity $Q$ class $\M_Q = \M_Q(n)$ such that the following is true: if $\BC\in \FL_S$ and $\epsilon\in (0,\epsilon_1]$, then $\CN_{\epsilon}(\BC)\subset\M_Q$, in the sense that there is a fixed open $U\supset B^{n+1}_{7/8}(0)$ for which $(V,U)\in \M_Q$ for each $V\in \CN_{\epsilon}(\BC)$.
\end{theorem}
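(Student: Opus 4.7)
The plan is to take $\M_Q$ to consist of pairs $(V,U)$, with $U\subset\R^{n+1}$ open, for which $V$ is a stationary integral $n$-varifold on $U$ whose regular part is stable (in the sense of $(\S2)$), which contains no classical singularities of density $<Q$, and which satisfies the scale-invariant mass bound
$$\w_n^{-1}\rho^{-n}\|V\|(B_\rho(X))\leq Q+3/4\qquad \text{for every }B_\rho(X)\subset U.$$
The constant $Q+3/4$ is chosen because monotonicity of mass ratios implies that every cone $\BC\in \FL_S$ satisfies $\w_n^{-1}r^{-n}\|\BC\|(B_r(Y))\leq \Theta_\BC(0)=Q+1/2<Q+3/4$ for all $r>0$ and $Y\in \R^{n+1}$, leaving a strict gap that one can exploit in compactness arguments. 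Properties (i) and (ii) of a multiplicity $Q$ class are then immediate: all the defining conditions are rotation- and dilation-invariant.

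For property (iii), given $(V_j,U_j)\subset \M_Q$ with uniform local mass bounds on an open $U\subset U_j$ (for large $j$), Allard's compactness theorem yields a subsequence $V_{j'}\weakly V$ for some stationary integral $V$ on $U$. Stability of $\reg V$ in $U$ passes to the limit in the standard way, and absence of classical singularities of density $<Q$ is preserved by the closedness of $\S_Q$ from \cite[Theorem C]{minterwick} noted at the start of Section \ref{sec:classes}. The scale-invariant mass bound passes to $V$ by lower semicontinuity of mass on open sets under weak varifold convergence, so $(V,U)\in \M_Q$. Finally, $\Theta_V(X)\leq Q+3/4$ everywhere in $U$ by the mass bound, and on the complement of $\sing V$ -- a set of $\H^n$-measure zero by general stratification -- the density is integer-valued, hence at most $Q$, giving $\Theta_V\leq Q$ $\H^n$-a.e.

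It remains to show $\CN_{\epsilon}(\BC)\subset \M_Q$ for every $\BC\in \FL_S$ and $\epsilon\leq \epsilon_1$. Fix $U:=B^{n+1}_{15/16}(0)$; membership in $\S_Q$ is automatic from the definition of $\CN_\epsilon(\BC)$, so only the mass bound on balls $B_\rho(X)\subset U$ requires proof. Arguing by contradiction, suppose it fails for every $\epsilon_1>0$; then one obtains $\epsilon_j\downarrow 0$, $\BC_j\in \FL_S$, $V_j\in \CN_{\epsilon_j}(\BC_j)$, and $B_{\rho_j}(X_j)\subset U$ with $\w_n^{-1}\rho_j^{-n}\|V_j\|(B_{\rho_j}(X_j))>Q+3/4$. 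The set $\FL_S$ is sequentially compact (parameterised by finitely many unit vectors $\mathbf{w}_i\in S^1$ and integer multiplicities $q_i$ subject to the constraints $\sum_i q_i=2Q+1$ and $\sum_i q_i\mathbf{w}_i=0$), so after extracting subsequences one has $\BC_j\to \BC_\infty\in \FL_S$, $X_j\to X_\infty$, $\rho_j\to \rho_\infty\in [0,15/16]$, and by a minor adaptation of Lemma \ref{lemma:convergence} allowing the base cone to vary, $V_j\weakly \BC_\infty$ in $B_1$. If $\rho_\infty>0$ then passing to the limit yields $\w_n^{-1}\rho_\infty^{-n}\|\BC_\infty\|(B_{\rho_\infty}(X_\infty))\geq Q+3/4$, contradicting the mass-ratio bound for cones in $\FL_S$ noted above. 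If $\rho_\infty=0$, then for any fixed $\sigma>0$ and all $j$ large enough that $\rho_j<\sigma$, monotonicity for $V_j$ gives $\w_n^{-1}\sigma^{-n}\|V_j\|(B_\sigma(X_j))\geq \w_n^{-1}\rho_j^{-n}\|V_j\|(B_{\rho_j}(X_j))>Q+3/4$, and the same passage to the limit again contradicts the cone mass-ratio bound. The main obstacle I anticipate is the extension of Lemma \ref{lemma:convergence} to varying $\BC$: one must verify that the third (multiplicity) condition in Definition \ref{defn:near} behaves continuously under perturbation of $\BC$, which reduces to the continuous dependence of the conical neighbourhoods $N(H)$ on $\BC_0$ through $\sigma_0$ along convergent sequences in $\FL_S$.
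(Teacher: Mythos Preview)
Your argument is correct and gives a self-contained proof, but it proceeds along a somewhat different line from the paper.  The paper invokes the argument of \cite[Theorem~2.19]{minter-5-2}, which relies on Lemma~\ref{lemma:M2C} together with Theorems~\ref{thm:MW1} and~\ref{thm:MW2}: the elementary monotonicity estimate of Lemma~\ref{lemma:M2C} controls the mass ratio at points near the spine directly (without any compactness), while away from the spine the sheeting theorems \ref{thm:MW1}--\ref{thm:MW2} are used to obtain graphical structure and hence the required mass bound.  Your route instead bypasses both Lemma~\ref{lemma:M2C} and the sheeting theorems by a soft contradiction--compactness argument: if the uniform mass ratio bound failed along some sequence, varifold convergence to a cone in $\FL_S$ (via the extension of Lemma~\ref{lemma:convergence} to varying base cones) would force a mass ratio $\geq Q+3/4$ for a cone of density $Q+1/2$, which is impossible.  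Your approach is arguably cleaner in that it treats all points uniformly and does not need the sheeting input; the price is the (mild) extension of Lemma~\ref{lemma:convergence}, which you correctly identify as the only point needing care.  That extension is routine once one checks that $\FL_S$ is closed under varifold limits (the constraints $\sum_i q_i=2Q+1$, $\sum_i q_i\mathbf w_i=0$ pass to limits and force $N_\infty\geq 3$ and each limit multiplicity $\leq Q$).

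Two very minor remarks.  First, for property~(iii), your justification that $\Theta_V\leq Q$ $\H^n$-a.e.\ via ``$\sing V$ has $\H^n$-measure zero by general stratification'' is more than you need and is not literally a consequence of stratification alone; it is simpler to observe that an integral varifold has integer density $\H^n$-a.e.\ on its support, and the mass bound forces that integer to be $\leq Q$.  Second, in the limit step when $\rho_\infty>0$ you should pass through closed balls slightly larger than $B_{\rho_\infty}(X_\infty)$ and use upper semicontinuity of mass on compacta (rather than lower semicontinuity on open sets) to transfer the strict inequality to the cone; since $B_{\rho_j}(X_j)\subset U=B_{15/16}$ forces $\rho_\infty+|X_\infty|\leq 15/16<1$, there is room to do this inside $B_1$.
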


\begin{proof}
	This follows by the same argument as in \cite[Theorem 2.19]{minter-5-2}, given Lemma \ref{lemma:M2C}, Theorem \ref{thm:MW1}, and Theorem \ref{thm:MW2}.
\end{proof}

\section{Density Gaps}\label{sec:density-gaps}

A key property we can prove about the class $V\in \CN_{\epsilon}(\BC)$ is that for $\epsilon = \epsilon(n,Q)\in (0,1)$ sufficiently small, for any $Z\in S(\BC)\cap B_{15/16}^{n+1}(0)$ points of density $\geq Q+1/2$ in $V$ will be close to $Z$, and indeed will accumulate at $Z$ as $\epsilon\downarrow 0$. More precisely, we define:

\begin{defn}
	Let $\delta>0$ and fix $\BC\in \FL$. We say that $V\in \S_Q$ has no $\delta$ \textit{density gaps} (with respect to $\BC$) if for each $y\in S(\BC)\cap B_{15/16}(0)\equiv \{0\}^2\times B^{n-1}_{15/16}(0)$ we have $B_\delta(y)\cap \{X:\Theta_V(X)\geq \Theta_\BC(0) = Q+1/2\} \neq\emptyset$.
\end{defn}

\begin{lemma}\label{lemma:gaps}
	Fix $\delta>0$ and $\BC\in \FL_S$. Then there exists $\epsilon = \epsilon(n,Q,\delta)\in (0,1)$ such that each $V\in \CN_\epsilon(\BC)$ has no $\delta$ density gaps (with respect to $\BC$).
\end{lemma}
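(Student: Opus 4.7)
The plan is to argue by contradiction via a slicing-plus-parity obstruction. Suppose the conclusion fails: there exist sequences $V_i \in \CN_{\epsilon_i}(\BC)$ with $\epsilon_i \downarrow 0$ and $y_i \in S(\BC) \cap B_{15/16}(0)$ such that $B_\delta(y_i) \cap \{X : \Theta_{V_i}(X) \geq Q+1/2\} = \emptyset$. By Lemma~\ref{lemma:convergence} we have $V_i \weakly \BC$, and passing to a subsequence $y_i \to y \in \overline{B}_{15/16}(0) \cap S(\BC)$. Since $\BC$ is translation-invariant along $S(\BC)$ and scale-invariant, the translated-and-rescaled varifolds $\tilde V_i := (\eta_{y_i,\delta/2})_{\#} V_i$ satisfy $\tilde V_i \in \CN_{\epsilon_i'}(\BC)$ with $\epsilon_i' \to 0$, still contain no density $\geq Q+1/2$ points in $B_2(0)$, and $\tilde V_i \weakly \BC$ on $B_1(0)$.

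I would next use the structural results imported from \cite{minterwick,wickstable} to pin down the shape of $\tilde V_i$ away from the spine. By Theorem~\ref{thm:MW2} applied in small balls around interior points of each half-hyperplane $H_j$ of $\BC$ with multiplicity $q_j < Q$, $\tilde V_i$ is a union of $q_j$ smooth minimal graphs over $H_j$ in each conical region $N(H_j) \cap \{r > \eta\}$ for any fixed $\eta > 0$ and $i$ sufficiently large. By Theorem~\ref{thm:MW1}, near multiplicity-$Q$ half-hyperplanes $\tilde V_i$ is a $GC^{1,\alpha}$ $Q$-valued graph over $H_j$. Integrality together with the conical mass lower bound in Definition~\ref{defn:near} then force the total multiplicity of sheets represented in a tubular neighborhood of the spine (but away from it) to be exactly $\sum_j q_j = 2Q+1$.

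The core of the argument is a parity obstruction via generic $2$-plane slicing. For a generic $y_0 \in \R^{n-1}$ close to $0$, the slice $\tilde V_i \cap P$ in the disk $P := (\R^2 \times \{y_0\}) \cap B_1(0)$ is, by the slicing theorem for integral varifolds, an integral $1$-varifold; since $\tilde V_i \weakly \BC$, this slice converges weakly to $\BC_0 \subset P$, and so in particular its outer boundary carries total multiplicity $2Q+1$. For generic $y_0$ the $2$-plane $P$ additionally avoids the $(n-2)$-stratum $\S_{n-2}$ of $\sing(\tilde V_i)$, which is $(n-2)$-rectifiable by Naber--Valtorta. Interior nodes of the slice then correspond to classical singularities of $\tilde V_i$ whose spines meet $P$, while density-$Q$ branch points of $\tilde V_i$ through $P$ contribute merely multiplicity-$Q$ regular arcs to the slice (not nodes). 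Our contradiction hypothesis rules out classical singularities of density $\geq Q+1/2$ in $B_\delta(y_i)$, and $(\S3)_Q$ rules out those of density $<Q$, so every interior node of the slice must carry total incident multiplicity exactly $2Q$. The degree identity $2Q \cdot n_{\mathrm{int}} = 2 E_{\mathrm{int}} + (2Q+1)$, where $n_{\mathrm{int}}$ counts interior nodes and $E_{\mathrm{int}}$ is the total multiplicity of purely-interior slice edges, then has an even left-hand side and an odd right-hand side, producing the desired contradiction.

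The principal obstacle lies in rigorously justifying this slice analysis, since the slice of a stationary $n$-varifold is not in general a stationary $1$-varifold in the slice $2$-plane. I would address this by first passing to a fine blow-up of the $\tilde V_i$ along the spine direction, analogous to (but considerably simpler than) the procedures of \cite{minter-5-2}: any such blow-up limit is translation-invariant along $S(\BC)$, so its generic $2$-plane slices are \emph{exactly} stationary $1$-varifolds in a $2$-disk, to which the parity count above applies verbatim. The upper semicontinuity of density along the blow-up scales then transports the parity contradiction back to $\tilde V_i$ (and hence to $V_i$), producing the required $\epsilon = \epsilon(n,Q,\delta)$.
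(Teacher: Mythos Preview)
Your parity heuristic --- $2Q+1$ boundary arcs versus interior junctions of even valence $2Q$ --- is exactly the mechanism the paper exploits. But the execution has two genuine gaps.

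First, the ``fine blow-up'' repair in your final paragraph cannot work as stated. Any limit that is translation-invariant along $S(\BC)$ and has the right mass will simply \emph{be} $\BC$ (or a nearby classical cone), and $\BC$ has $\Theta_\BC = Q+1/2$ along its entire spine. So the crucial hypothesis ``no points of density $\geq Q+1/2$'' is lost in the limit, and there is nothing left to contradict. Upper semicontinuity of density goes the wrong way here: it lets density \emph{increase} in the limit, so you cannot transport the absence of high-density points forward, nor can you pull a contradiction at the limit back to finite $i$. The slice of a blow-up limit being stationary buys you nothing.

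Second, even before the blow-up step, the slice $\tilde V_i \cap P$ does not have the clean graph structure you assume. The paper explicitly flags that Sard's theorem is unavailable for the projection $\spt\|V\|\to B^{n-1}$ because $V$ is only $GC^{1,\alpha}$, not $C^2$; so you cannot pick a generic $y_0$ for which the $2$-plane meets $\spt\|V\|$ transversely through the branch set $\tilde\B_Q$. Near branch points the sheets are only $Q$ Lipschitz graphs, and their trace on $P$ need not be a single multiplicity-$Q$ arc as you assert. Your degree identity $2Q\cdot n_{\mathrm{int}} = 2E_{\mathrm{int}} + (2Q+1)$ presupposes a finite-graph structure that is not available.

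The paper sidesteps both issues by abandoning varifold slicing altogether. It takes a loop $\gamma$ in the slab intersecting $V$ in $2Q+1$ points, contracts it via a homotopy $F:\overline B^2_1\to S$, and perturbs $F$ to a map $\tilde F$ transverse to $\spt\|V\|$; here Sard's theorem \emph{does} apply, because the relevant map is Lipschitz from a $2$-disk into $S^n$ with $n\geq 2$. The preimage $R = \tilde F^{-1}(\spt\|V\|)\subset \overline B^2_1$ is then a $1$-dimensional set with explicitly controlled local structure (smooth arcs, $2Q$-junctions, and regions covered by $Q$ Lipschitz graphs), and a hands-on path-following argument --- not a stationarity-based degree count --- extracts the odd/even contradiction. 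The point is that $R$ need not be, and is not, a stationary $1$-varifold; only its local combinatorics matters.
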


\begin{proof}
	We will in fact prove more: we will show that there is an $\epsilon_* = \epsilon_*(n,Q)\in (0,1)$ sufficiently small such that for $\H^{n-1}$-a.e. $y\in B^{n-1}_{15/16}(0)$ we have
	$$\{X\in\spt\|V\|: \Theta_V(X)\geq Q+1/2\}\cap (\R^2\times\{y\}) \neq\emptyset.$$
	This of course proves the result, as for any $\delta>0$ one may apply Theorem \ref{thm:MW1} and Theorem \ref{thm:MW2} to find an $\epsilon = \epsilon(n,Q,\delta)\in (0,\epsilon_*)$ such that if $\BC\in \FL_S$, $V\in \CN_{\epsilon}(\BC)$, then on $B_{31/32}^{n+1}(0)\setminus \{|x|\leq \delta/4\}$ we have $\Theta_V\leq Q$, and thus any point of density $\geq Q+1/2$ must lie in $B_\delta(S(\BC))\cap B^{n+1}_{31/32}(0)$.
	
	We first claim that, for $\epsilon = \epsilon(n,Q)\in (0,1)$ sufficiently small, for every $Y = (0,y)\in \{0\}^2\times B_{31/32}^{n-1}(0)$, $\BC\in \FL_S$, and $V\in \CN_{\epsilon}(\BC)$, we have
	\begin{equation*}\tag{$\star$}\label{E:gap}
	\sing(V)\cap (\R^2\times\{y\})\cap B^{n+1}_{31/32}(0) \not\subset \CC_Q\cup \tilde{\B}_Q
	\end{equation*}
	where by $\CC_Q$ we mean the set of density $Q$ classical singularities in $V$, and by $\tilde{\B}_Q$ we mean the set of singular points in $V$ at which one tangent cone is a multiplicity $Q$ hyperplane (thus, by Theorem \ref{thm:MW1}, this must be the unique tangent cone at the point). We note that the proof of \eqref{E:gap} will be topological in nature, and so will not use the stationarity assumption of $V$ any more than through the structural properties of $\CC_Q$ and $\tilde{\B}_Q$.
	
	Indeed, to see this we argue as follows. Choose $\epsilon = \epsilon(n,Q)\in (0,1)$ such that if $V\in \CN_{\epsilon}(\BC)$, then on $\{(x,y)\in B^{n+1}_{63/64}(0):|x|>1/100\}$ we can apply Theorem \ref{thm:MW1} and Theorem \ref{thm:MW2} to express $V$ on this region as a sum of smooth minimal graphs or $Q$-valued $GC^{1,\alpha}$ graphs over subsets of the half-hyperplanes in some cone $\BC^\prime\in \FL_S$ which is close to $\BC$ (here, we are using the fact that cones in $\FL_S$ can only limit, as varifolds, onto cones in $\FL_S$ to ensure that $\epsilon$ can be chosen independent of $\BC$). Now, if \eqref{E:gap} failed for this choice of $\epsilon$, then we may find $\BC\in \FL_S$, $V\in \CN_{\epsilon}(\BC)$, and $(0,y_0)\in \{0\}^2\times B^{n-1}_{31/32}$ for which
	$$\sing(V)\cap (\R^2\times\{y_0\})\cap B^{n+1}_{31/32}(0)\subset \CC_Q\cup \tilde{\B}_Q.$$
	However, we know that $\CC_Q\cup \tilde{\B}_Q$ is a relatively open subset of $\sing(V)$ by Theorem \ref{thm:MW1} and Theorem \ref{thm:MW3}, and thus a simple compactness argument gives that we must be able to find a $\rho>0$ such that
	\begin{equation}\label{E:gap1}
	\sing(V)\cap (\R^2\times B^{n-1}_\rho(y_0))\cap B^{n+1}_{61/64}(0)\subset \CC_Q\cup \tilde{\B}_Q.
	\end{equation}
	Let $S:= (\R^2\times B^{n-1}_\rho(y_0))\cap B^{n+1}_{61/64}(0)$ denote this slab region. Note that one cannot apply Sard's theorem to find a slice of the form $\R^2\times\{y\}$, $y\in B^{n-1}_\rho(y_0)$, which intersects $V$ transversely (see below for precisely what we mean by this when $V$ is non-smooth) as $V$ is only at best $GC^{1,\alpha}$, which is not enough regularity for Sard's theorem to guarantee existence of a slice on which every point is a regular point of the projection map $\spt\|V\|\to B^{n-1}_\rho(y_0)$ (this would require suitable $C^2$ regularity of $V$)\footnote{Note that if one knows that $\tilde{\B}_Q$ has $\H^{n-1}$-measure 0 (as was the case in \cite{minter-5-2} when $Q=2$) then instead of \eqref{E:gap} one can instead assuming that the singular set is contained within $\CC_Q$ in a given slice, and then there will be enough regularity to use an argument based on Sard's theorem via projecting onto slices (as seen in \cite{minter-5-2} and \cite{simoncylindrical}). This is then enough to conclude the lemma due to the size assumption on the branch set.}. Also, note that there is no hope of proving orientability of $V$ in $S$, i.e. endowing $V\res S$ (or a suitable subset of) with a current structure, as it is a priori possible for $V$ to contain objects topologically similar to non-orientable smoothly immersed topological spaces such as Klein bottles.
	
	Instead, let $\gamma:S^1\to S$ be the smooth loop with image $\del B^2_{15/16}(0)\times \{y_0\}$; we know that $\gamma$ intersects $V$ act exactly $2Q+1$ points (by Theorem \ref{thm:MW1} and Theorem \ref{thm:MW2}), which by a slight perturbation of $\gamma$ we can assume are in $\reg(V)$ (as the regular set of $V$ is dense and open in $\spt\|V\|$ by Allard's regularity theorem), and moreover that these intersections are transversal to $V$; denote this perturbed curve by $\tilde{\gamma}$. For future reference, by being \textit{transverse} at a flat singular point we mean the usual transversality condition using the (uniquely defined) tangent hyperplane to $V$ at the point, and at a classical singularity we mean transverse to the boundary of each of the submanifolds-with-boundary in the classical singularity in the usual sense. 
	
	We know that this loop $\tilde{\gamma}$ is contractible in $S$ as $S$ is simply connected; let $F:[0,1]\times S^1\to S$ be a homotopy contracting $\tilde{\gamma}$ to a point in $S$ disjoint from $\spt\|V\|$, i.e. we have a smooth map $F: \overline{B}^2_1(0)\to S$ with $F|_{\del B^2_1(0)} = \tilde{\gamma}$. As locally about any point in $\spt\|V\|$, $V$ is locally a sum of a finite number of Lipschitz graphs, this Lipschitz regularity (along with the fact that $n\geq 2$) is enough regularity for us to apply Sard's theorem in this instance to prove a transversality homotopy theorem for $F$ (see \cite[Chapter 2]{GP} and \cite[Theorem 1.4]{BHS})\footnote{Indeed, the usual proof that one can homotopically change a non-transverse map to a nearby map which is transverse argues by characterising transversality of a certain family of nearby maps in terms of the existence of a regular value of another map, which in this case is a Lipschitz map from $\overline{B}^2_1(0)\to S^n$, where $S^n$ is the $n$-dimensional unit sphere. By Sard's theorem, namelt \cite[Theorem 1.4]{BHS}, under these assumptions we know that regular values exist, in the usual sense that every point in the pre-image of the regular value is a regular point, and so the argument still holds.}, i.e. there is a nearby (i.e. in supremum) smooth map $\tilde{F}:\overline{B}^2_1(0)\to S$ homotopic to $F$ which intersects $\spt\|V\|$ transversely. Note that as $F|_{\del B^2_1(0)} = \gamma$ intersects $\spt\|V\|$ transversely already, we can arrange our $\tilde{F}$ so that on an open neighbourhood $U \supset\del B^2_1(0)$ of $\del B^2_1(0)$ in $B^2_1(0)$ we have $\tilde{F}|_{U} = F|_U$.
	
	But now we define a new rectifiable set, $R$, by looking at the pre-image $\tilde{F}^{-1}(\spt\|V\|)\subset \overline{B}^2_1(0)$: we define the support of $R$ to be $\tilde{F}^{-1}(\spt\|V\|)$, and we take the multiplicity function on $R$ as: $\Theta_R(x):= \Theta_V(\tilde{F}(x))$; note that $\Theta_R$ is always integer-valued, and moreover lies in $\{1,2,\dotsc,Q\}$ at every point of $R$, and moreover from the regularity of $\spt\|V\|$ and transversality of $\tilde{F}$, $R$ has a unique tangent line at $\H^1$-a.e. point. The transversality condition of $\tilde{F}$, along with the local structural results of Theorem \ref{thm:MW1}, Theorem \ref{thm:MW2}, and Theorem \ref{thm:MW3}, guarantees that we may write $R = \reg(R)\cup \CC_R\cup \tilde{\B}_R$ as a disjoint union, where $\reg(R):= \tilde{F}^{-1}(\reg(V))$, $\CC_R :=\tilde{F}^{-1}(\CC_Q)\subset B^2_1(0)$, and $\tilde{\B}_R:= \tilde{F}^{-1}(\tilde{\B}_Q)\subset B^2_1(0)$; thus, locally about each point $x\in R$ by the transversality condition we have a radius $\rho_x>0$ such that:
	\begin{enumerate}
		\item [(i)] if $x\in \reg(R)\cap B_1^2(0)$, then $R\cap B_{\rho_x}(x)$ is a smooth 1-dimensional submanifold-with-boundary in $B_{\rho_x}(x)$, with boundary contained in $\del B_{\rho_x}(x)$, with some constant integer multiplicity; otherwise, if $x\in \reg(R)\cap \del B_1^2(0)$, then $R\cap B_{\rho_x}(x)$ is a smooth 1-dimensional submanifold-with-boundary (again of constant integer multiplicity) in $\overline{B}^2_1(0)\cap B_{\rho_x}(x)$, with one boundary point contained in $\del B^2_1(0)$ and the other in $\del B_{\rho_x}(x)\setminus \del B^2_1(0)$;
		\item [(ii)] if $x\in \CC_R$, then $R\cap B_{\rho_x}(x)$ is a smooth 1-dimensional classical singularity of density $Q$, i.e. it is a sum of $2Q$ smooth 1-dimensional submanifolds-with-boundary which either coincide or are disjoint except at a single point, which is necessarily the common interior boundary point of each of the $2Q$ curves;
		\item [(iii)] if $x\in \tilde{\B}_R$, then $R\cap B_{\rho_x}(x)$ is a $Q$-valued $GC^{1,\alpha}$ graph over some (affine) interval; in particular, it is a union of $Q$ Lipschitz graphs.
	\end{enumerate}
	In the above we are counting with multiplicity and so some of the sheets in a classical singularity may coincide, for example. We also know that $\CC_R$ will be a (not necessarily closed) discrete set of points in $B^2_1(0)$. We also know that there is some neighbourhood of $\del B^2_1(0)$ where every point of $R$ lies in $\reg(R)$, and that $\tilde{\B}_R$ is a compact subset of $B^2_1(0)$. Also, we have $\overline{\CC_R}\subset \CC_R\cup \tilde{\B}_R$.
	
	However, we can now derive the necessary contradiction to prove \eqref{E:gap}, as we are in a situation where $R$ has, counted with multiplicity, $2Q+1$ boundary points in $\del B^2_1(0)$, yet in the interior $B^2_1(0)$ is a locally about every point a union of finitely many Lipschitz graphs which has no interior boundary points. Indeed, we can do this by ``unravelling'' curves in $R$, essentially arguing as one would in the classification of compact smooth $1$-manifolds; here we must be more careful with multiplicity, but the idea is essentially the same.
	
	Let $P = \{p_1,\dotsc,p_{2Q+1}\}$ denote the $2Q+1$ points on $\del B^2_1(0)$ (where we count each point a distinct number of times depending on its multiplicity in $R$). Firstly, cover $\tilde{\B}_R$ by finitely many balls, $B_1,\dotsc,B_N$, such that $B_i\subset B^2_1(0)$ and for each $i$, $R\cap B_i$ is a union of $Q$ Lipschitz graphs. Note that, as we know $\left|\CC_R\cap B^2_1(0)\setminus \cup_i B_i\right|<\infty$ and $\del(\cup_i B_i)\subset \reg(R)\cup \CC_R$, we may decrease the radius of each ball $B_i$ slightly to ensure that $\CC_R\cap \del B_i = \emptyset$ for each $i$, and so in particular $\del (\cup_i B_i)\subset \reg(R)$, and that the $B_i$ will still cover $\tilde{\B}_R$.
	
	We now define to the notion of an $R$\textit{-path}; loosely speaking, this is a continuous curve $\Gamma:[0,1]\to R$ which obeys $\Gamma(0)\in P$ which traverses $R$ without back-tracking, and is only allowed to pass through a given point $x\in R$ at most $\Theta_R(x)$-times. More precisely, a continuous path $\Gamma:[0,1]\to R$ is an $R$\textit{-path} if it obeys all the following:
	\begin{itemize}
		\item $\Gamma(0)\in P$;
		\item if $\Gamma(t)\in \reg(R)$, then there is a $\delta>0$ such that $\Gamma|_{(t-\delta,t+\delta)\cap [0,1]}$ is injective;
		\item for each $x\in R$, $|\{t:\Gamma(t) = x\}|\leq \Theta_R(x)$;
		\item if $\Gamma(t)\in \cup_i B_i$, then, if $I$ is the largest interval containing $t$ which obeys $\Gamma|_{I}\subset \cup_i B_i$, then for each $t_0\in I$, there is a $\delta = \delta(t_0)>0$ such that $\Gamma|_{(t_0-\delta,t+\delta_0)\cap [0,1]}$ is injective.
	\end{itemize}
	Note that in the last bullet point, the existence of such a largest interval $I$ is guaranteed by continuity of $\Gamma$, and moreover it will always be of the form $(a,b)$ or $(a,1]$. We also note that the local injectivity conditions ensures that $\Gamma$ ``keeps moving forward'' in such regions. In particular, the only points where $\Gamma$ need not be locally injective is about points in $\CC_R\setminus\cup B_i$, which is a finite set.
	
	Fix a boundary point $p\in P$. We now claim that one can find an $R$-path $\Gamma$ starting at $p$ such that $\Gamma(1)\in P\setminus\{p\}$. Indeed, as $p\in \reg(R)$, we know that one can define an $R$-path $\Gamma_1$ with $\Gamma_1(0) = p$ simply by allowing $\Gamma_1$ to traverse the connected component $A_1$ of $\reg(R)\setminus \cup_i B_i$ containing $p$. At this point, now define a new set, $R_1$, which is identical to $R$ except we decrease the density of $A_1$ by $1$, i.e. $\Theta_{R_1}(x) := \Theta_{R}(x)-1$ for all $x\in A_1$, and if $x\not\in A_1$ then $\Theta_{R_1}(x):= \Theta_{R}(x)$; if this causes some $x\in R_1$ to have density $0$, we remove it from $R_1$. We then know that $\Gamma_1(1) \in (\reg(R)\cap \del B^2_1(0))\cup (\CC_R\setminus \cup_i B_i)\cup \del( \cup_i B_i)$: we extend $\Gamma_1$ to a new path $\Gamma_2$, and define a new set $R_2$ from $R_1$, in each of these cases by:
	\begin{enumerate}
		\item [(a)] if $\Gamma_1(1) \in \reg(R)\cap \del B^2_1(0)$, then set $\Gamma_2:= \Gamma_1$ and take $R_2$ to be identical to $R_1$;
		\item [(b)] if $\Gamma_1(1)\in \CC_R\setminus \cup_i B_i$, then locally about $\Gamma_1(1)$ we know that $R_1$ is a sum of $2Q-1$ submanifolds-with-boundary (as we decreased the density of one such submanifold-with-boundary by $1$). Simply choose any of these $2Q-1$ submanifolds-with-boundary, and extend $\Gamma_1$ to $\Gamma_2$ by traversing the connected component $A_2$ of $\reg(R)\setminus\cup_i B_i$ containing that submanifold-with-boundary. Then, define $R_2$ from $R_1$ by decreasing the multiplicity of $A_2$ by 1;
		\item [(c)] if $\Gamma_1(1)\in \del (\cup_i B_i)$, then choose a Lipschitz graph in $\cup_i B_i$ which starts at $\Gamma_1(1)$, and extend $\Gamma_1$ via this path. Keep on repeating this (and only finitely many repetitions are needed as there are only finitely many balls and each ball only have a finite number of Lipschitz graphs) until the path extension leaves $\cup_i B_i$; this point of exiting will necessarily be a regular point of $R$ by construction, and so we may further extend the path by following this connected component of the regular set in $\reg(R)\setminus\cup_i B_i$. We then define $\Gamma_2$ to be this new path, and define the new set $R_2$ by decreasing the density of each point in $\overline{\cup_i B_i}$ by the number of distinct times that $\Gamma_2$ traversed it.
	\end{enumerate}
	Note that, every point in $R_2$ still has integer density after this procedure; in particular, every classical singularity in $R_2$ is still comprised of an even number of submanifolds-with-boundary (counted with multiplicity), and in each $B_i$ we have that $R_2$ is a union of at most $Q$ Lipschitz graphs (which may be different to the original Lipschitz graphs defined in $B_i$ if some of the balls $B_i$ intersect).
	
	Moreover, we know that $\Gamma_2(1)\in (\reg(R)\cap \del B^2_1(0))\cup (\CC_R\setminus \cup_i B_i)\cup \del(\cup_i B_i)$, and thus we are in the same situation as before for $\Gamma_1$, except we have a new set $R_2$; however, $R_2$ has all the desired properties needed for inductively continuing this procedure, namely that: if $\Gamma_2(1)\in \reg(R)\cap \del B^2(0)$, we terminate the inductive procedure; if $\Gamma_2(1)\in \CC_R\setminus\cup_i B_i$, then necessarily $\Gamma_2(1)$ is a classical singularity with an odd (and hence non-zero) number of submanifolds-with-boundary in $R_2$, and so in particular there is another sheet of the classical singularity we can choose to continue the procedure; and if $\Gamma_2(1)\in \del (\cup_i B_i)$, then there must be at least one continuous path in $\cup_i B_i$ which we can use to extend $\Gamma_2$, as if not then, as all points in $R\cap \cup_i B_i$ are regular points, this would mean that the regular piece we used to connect $\Gamma_2$ to this endpoint occurs with multiplicity $0$ in $R_1$, which is a contradiction by our construction.
	
	Hence, we may iteratively continue this procedure, constructing a sequence $(\Gamma_i,R_i)$, where each $\Gamma_{i+1}$ is an extension of the path $\Gamma_i$, and $R_{i+1}$ is constructed from $R_i$ by subtracting from the density of a given point $x\in R_i$ the number of times $\Gamma_{i+1}$ traverses $x$. In particular, as there are only finitely many balls $B_1,\dotsc,B_N$, all of which start with a finite number (i.e. $Q$) Lipschitz graphs in $R$, and finitely many classical singularities away from $\cup_i B_i$ in $R$, say $\{c_1,\dotsc,c_M\}$, and thus finitely many connected components of $\reg(R)\setminus\cup_i B_i$, say $\{A_1,\dotsc,A_K\}$, we see that, if we write $q_i \equiv (q^\ell_i)_{\ell=1}^L:= (q_i^{B_1},\dotsc,q_i^{B_N}, q_i^{c_1},\dotsc,q_i^{c_M},q^{A_1}_i,\dotsc,q^{A_K}_i)$ for the vector of (integer) densities of these respective regions and points in $R_i$, then the sum $\sum^L_{\ell=1}q_i^\ell$ is strictly decreasing in $i$, and this sum is finite (as it has an upper bound of $(N+M+K)Q$) and always non-negative, and hence this procedure must terminate in finitely many steps\footnote{As only finitely many steps are needed and at each stage there are only finitely many choices to make, we do not need to invoke the axiom of choice in order to make these choices. Of course, if one was happy to argue via the axiom of choice, namely via Zorn's lemma, the existence of a maximal path can be deduced by introducing a suitable partial order and arguing that every chain has an upper bound. Maximality of the path then forces it to end in $R\cap \del B^2_1(0)$.}; in particular, to terminate the path must end at a point in $\del B^2_1(0)$, and so there is a $i_*\geq 1$ such that $\Gamma_{i_*}(1) \in \reg(R)\cap \del B^2_1(0)$, and moreover by construction we always have that $\Gamma_{i_*}(1)\in P\setminus \{\Gamma_{i_*}(0)\}$ (as, if we had $\Gamma_{i_*}(1)$ ending at the same point as its starting point, then the component of $\reg(R)$ containing this point must have multiplicity $\geq 2$, and so we can choose $\Gamma_{i_*}(1)$ to be distinct from $p = \Gamma_{i_*}(0)$). The path $\Gamma_{i_*}$ is the desired path $\Gamma$.
	
	But now look at $R_{i_*}$; this has the same form as $R$, namely it has an odd number (i.e. $2Q-1$) points on $\del B^2_1(0)$ (which moreover are regular points), say $P_1:= \{p_1^1,\dotsc,p_{2Q-1}^1\}$ (which will be $P\setminus \{p,\Gamma_{i_*}(1)\}$) and in the interior it has classical singularities (which by construction have an even number of sheets, counted with multiplicity) and flat singular points, where locally $R_{i_*}$ is represented by a sum of (at most $Q$) Lipschitz graphs over an interval. Thus, the argument may be repeated for $R_{i_*}$, starting at some point $p_*\in P_1$. If we inductively repeat this procedure $Q$ times, we end up with some set $R^\prime$ of the same general form, except $R^\prime$ has exactly one (regular) point in $\del B^2_1(0)$ (which moreover has multiplicity one). However this is now a contradiction, as if we perform the same construction of an $R^\prime$-path starting at this boundary point, we see that in finitely many steps the procedure must terminate, and necessarily from the structure of $R^\prime$ the only way this path can terminate is if it ends at a point in $R^\prime\cap \del B^2_1(0)$; but this is impossible as there was only one boundary point, and thus we reach a contradiction. This contradiction therefore proves \eqref{E:gap}. 
	
	From \eqref{E:gap} we can now prove the result. Indeed, with $\epsilon = \epsilon(n,Q)\in (0,1)$ as in \eqref{E:gap}, note that \eqref{E:gap} gives that for each $y\in B^{n-1}_{31/32}(0)$, we must be able to find some $X_y\in \sing(V)\cap (\R^2\times\{y\})\cap B^{n+1}_{31/32}(0)$ with $X_y\not\in \CC_Q\cup \tilde{\B}_Q$. But, from the usual dimension bounds on the stratification of the singular set of $V$ we know that $\dim_\H(\S_{n-2})\leq n-2$, where $\S_{n-2}$ is the $(n-2)$-strata, and hence this gives that for $\H^{n-1}$-a.e. $y\in B^{n-1}_{31/32}(0)$, the corresponding $X_y$ must lie in $\sing(V)\setminus (\S_{n-2}\cup \CC_Q\cup \tilde{\B}_Q)$. At such $X_y$, there must be a tangent cone to $V$ at $X_y$ which has spine of dimension at least $n-1$. However, from Theorem \ref{thm:MW1}, Theorem \ref{thm:MW2}, and Theorem \ref{thm:MW3} and the fact $X_y\not\in \CC_Q\cup \tilde{\B}_Q$, this implies that this tangent cone must be at sum of at least $2Q+1$ half-hyperplanes with a common boundary, which implies that $\Theta_V(X_y)\geq Q+1/2$; this completes the proof. 
\end{proof}

%One could also argue this part by Zorn's lemma to construct "maximal paths".
%One could also do this via a graph theoretic approach using the "hand-shaking" lemma for infinite graphs (see the wikipedia article). This says that the number of odd vertices and "ends" of odd degree must either be even or infinite. So you could construct a graph from this situation, where non-isolated branch points are the "rays", and the interior points in the branch set are contracted to a single vertex. Should then be able to construct a graph from this where each "ray" has even degree (it is 2Q, due to approaching from both sides), and thus the statement just becomes that the number of odd vertices and odd rays, which in this case is just the number of odd vertices, must be even. But clearly this is false.

\section{Boundary Regularity of Blow-Up Classes}\label{sec:blow-up}

The aim of this section is to show that, given it obeys appropriate properties (see $(\FB1)-(\FB8)$ below), each function within a blow-up class relative to a stationary classical cone $\BC^{(0)}$ is $GC^{1,\gamma}$ up-to-the-boundary for some uniform $\gamma = \gamma(n,Q)\in (0,1)$. The main difference in this section compared to that in \cite{minter-5-2} is that we no longer have such a nice structure for the blow-ups, and so one must work harder to establish their boundary regularity. The resolution is to apply a combination of the techniques seen in \cite{minter-5-2} and \cite{minterwick}.

Let us fix a base cone $\BC^{(0)}\in \FL_S\cap \FL_I$, where $I\in \{0,1,2\}$; thus, we may write $\BC^{(0)} = \BC^{(0)}_0\times\R^{n-1}$, and $\BC^{(0)} = \sum_{i=1}^N q_i\llbracket H_i^{(0)}\rrbracket$, where $H_i^{(0)}$ are half-hyperplanes in $\R^{n+1}$ with $\del H_i^{(0)} = \{0\}^2\times\R^{n-1}$ and $q_i\in \{1,2,\dotsc,Q\}$ are such that $\sum_{i=1}^Nq_i = 2Q+1$. Let us assume without loss of generality that our labelling is chosen so that $q_1\leq q_2\leq \cdots\leq q_N$ (so in particular, when $I>0$, $q_{N-I+1}=\cdots = q_{N} = Q$, and otherwise all $q_i$ are $<Q$).

For notational convenience, we rotate each half-hyperplane $H^{(0)}_i$ to the fixed half-hyperplane $H:= \{(x^1,\dotsc,x^{n+1})\in \R^{n+1}:x^1=0,\; x^2>0\}$ whilst fixing the axis $\{0\}^2\times\R^{n-1}$; this is done via finding a rotation $\tilde{R}_i:\R^2\to \R^2$ which maps the ray $\ell_i$ in $\spt\|\BC^{(0)}_0\|$ corresponding to $H^{(0)}_i$ to the ray $\{(x^1,x^2)\in \R^2:x^1=0,\; x^2>0\}$ and then extending $\tilde{R}_i$ to a rotation $R_i:\R^{n+1}\to \R^{n+1}$ via $R_i(x,y):= (\tilde{R}_i(x),y)$. Moreover, any function $\tilde{v}:H_i\to H_i^\perp$ then gives a function $v:H\to H^\perp$ via $v(x):= R_i v(R_i^{-1}x)$. We do this so that we can treat a function defined on $\spt\|\BC^{(0)}\|$ instead as tuple of functions defined on $H$.

\begin{defn}\label{def:blow-up}
	We say a collection of functions $\FB(\BC^{(0)})$ is a \textit{proper} (\textit{coarse}) \textit{blow-up class} over $\BC^{(0)}\in \FL_S\cap \FL_I$ as above if it obeys the following properties:
	\begin{enumerate}
		\item [$(\FB1)$] Each element $v\in \FB(\BC^{(0)})$ takes the form $v = (v^1,\dotsc,v^{N})$, where for $i=1,\dotsc,N-I$ $v^i\in L^2(B_1\cap H; \R^{q_i})\cap W^{1,2}_{\text{loc}}(B_1\cap H;\R^{q_i})$ and for $i=N-I+1,\dotsc,N$, $v^i\in L^2(B_1\cap H;\A_{Q}(H^\perp))\cap W^{1,2}_{\text{loc}}(B_1\cap H;\A_Q(H^\perp))$;
		\item [$(\FB2)$] If $v\in \FB(\BC^{(0)})$, we have that if $i\leq N-I$ then $v^i = (v^i_1,\dotsc,v^i_{q_i})$ is harmonic with $v^i_1\leq \cdots\leq v^i_{q_i}$, and if $i>N-I$, then $v^i$ obeys in $H\cap B_1$ the conclusions of Theorem \ref{thm:MW4}; in particular it is $GC^{1,\alpha}$ in $H\cap B_1$, for some $\alpha = \alpha(n,Q)$;
		\item [$(\FB3)$] If $v\in \FB(\BC^{(0)})$ and $z\in B_1\cap \del H$, then for each $\rho\in (0,\frac{3}{8}(1-|z|)]$ we have:
		$$\int_{B_{\rho/2}(z)\cap H}\sum^{N}_{i=1}\frac{|v^i(z)-\kappa^i(z)|^2}{|x-z|^{n+3/2}}\ \ext x \leq C\rho^{-n-3/2}\int_{B_\rho(z)\cap H}\sum^N_{i=1}|v^i(x)-\kappa^i(z)|^2\ \ext x$$
		where $\kappa:B_1(0)\cap \del H\to \R^2$ is a smooth single-valued function and $\kappa^i(x) := R_i \kappa^{\perp_{H_i}}(Q^{-1}_i(x))$, thus $|v^i-\kappa^i(z)|^2 = \sum_{j=1}^{q_i}|v^i_j-\kappa^i(z)|^2$; moreover $\kappa$ obeys
		$$\sup_{B_{5/16}\cap \del H}|\kappa|^2\leq C\int_{B_{1/2}\cap H}|v|^2;$$
		\item [$(\FB4)$] If $v\in \FB(\BC^{(0)})$, $z\in B_1\cap \del H$, and $\rho\in (0,\frac{3}{8}(1-|z|)]$, we have:
		$$\int_{B_{\rho/2}(z)\cap H}\sum^N_{i=1}R_z^{2-n}\left(\frac{\del}{\del R_z}\left(\frac{v^i-v^i_a(z)}{R_z}\right)\right)^2\ \leq C\rho^{-n-2}\int_{B_{\rho/2}(z)\cap H}\sum^N_{i=1}|v^i-\ell_{v^i,z}|^2$$
		where $R_z(x):= |x-z|$ and $\ell_{v^i,z}(x):= v^i_a(x) + (x-z)\cdot Dv^i_a(z)$ is the first-order linear approximation to the average part $v^i_a:= q_i^{-1}\sum_{j=1}^{q_i}v^i_j$ at $z$;
		\item [$(\FB5)$] If $v\in \FB(\BC^{(0)})$, then:
		\begin{enumerate}
			\item [$(\FB5\text{I})$] For each $z\in B_1\cap \del H$ and $\sigma\in (0,\frac{3}{8}(1-|z|)]$, if $v\not\equiv 0$ in $B_\sigma(z)\cap H$, then $v_{z,\sigma}(\cdot):= \|v(z+\sigma(\cdot))\|^{-1}_{L^2(B_1\cap H)}|v(z+\sigma(\cdot))\in \FB(\BC^{(0)})$;
			\item [$(\FB5\text{II})$] $\|v-\ell_v\|^{-1}_{L^2(B_1\cap H)}(v-\ell_v)\in \FB(\BC^{(0)})$ whenever $v-\ell_v\not\equiv 0$ in $B_1\cap H$, where $v-\ell_v:= (v^1-\ell_{v^1},\dotsc,v^N-\ell_{v^N})$ and $\ell_{v^i}\equiv \ell_{v^i,0}$ (from $(\FB4)$);
		\end{enumerate}
		\item [$(\FB6)$] If $(v_m)_m\subset\FB(\BC^{(0)})$, then there is a subsequence $(m')\subset(m)$ and a function $v\in \FB(\BC^{(0)})$ such that $v_{m'}\to v$ strongly in $L^2_{\text{loc}}(B_1\cap\overline{H})$ and weakly in $W^{1,2}_{\text{loc}}(B_1(0)\cap H)$;
		\item [$(\FB7)$] There exist constants $\beta = \beta(n,Q)$ and $\epsilon = \epsilon(\BC^{(0)})$ such that whenever $v\in \FB(\BC^{(0)})$ has $v^i_a(0) = 0$, $Dv^i_a(0) = 0$ for each $i=1,\dotsc,N$, and $\|v\|_{L^2(B_1\cap H)}=1$, then the following is true: if $v_* = (v_*^1,\dotsc,v^{N}_*)$ is such that for $i\leq N-I$, $v^i_*\equiv 0$ and for $i>N-I$, $v^i_*:H\to \A_Q(H^\perp)$ is of the form $v^i_*(x) = \sum_{j=1}^Q \llbracket a^i_j x^2\rrbracket$ with $(v^i_*)_a\equiv 0$, where $a_j^i\in \R$, yet for some $i>N-I$ we have $v^i_*\not\equiv 0$, and moreover if
		$$\int_{B_1\cap H}\G(v,v_*)^2 < \epsilon$$
		then we have $\left. v\right|_{B_{1/2}\cap H}\in GC^{1,\beta}(\overline{B_{1/2}(0)\cap H})$;
		\item [$(\FB8)$] Fix $\Omega\subset H\cap B_1$ an open subset. Suppose $v\in \FB(\BC^{(0)})$ is $GC^1$ on $\overline{\Omega}$, and obeys $v_a\equiv 0$. Then, for any $\zeta\in C^1(\overline{\Omega}\cap B_{3/4};\R)$ with $\spt(\zeta)$ a compact subset of $\overline{\Omega}\cap B_{3/4}^n(0)$, we have
		$$\int_{H\cap B_1}|Dv_i|^2\zeta = -\int_{H\cap B_1}\sum_{\alpha=1}^{q_i}v_i^\alpha Dv_i^\alpha \cdot D\zeta$$
		and for any $\psi = (\psi^2,\dotsc,\psi^{n+1})\in C^1(\overline{\Omega}\cap B_{3/4};\R^n)$ with $\spt(\psi)$ a compact subset of $\overline{\Omega}\cap B_{3/4}^n(0)$ and furthermore with $\left.\psi^2\right|_{\{x^2=0\}}\equiv 0$:
		$$\int_{H\cap B_1}\sum_{\alpha=1}^{q_i}\sum_{j,k=1}^n\left(|Dv_i|^2\delta_{jk} - 2D_jv_i^\alpha D_kv_i^\alpha\right)D_j\psi^k = 0;$$
		here, $i=1,\dotsc,N$.
	\end{enumerate}
\end{defn}

We remark that the new property $(\FB8)$ actually follows immediately from $(\FB2)$, $(\FB3)$, and $(\FB4)$ (see Section \ref{sec:squash-squeeze}), and is therefore not a new variational identity arising from a suitable application of the first variation formula for the sequence of stationary integral varifolds generating $v\in \FB(\BC^{(0)})$; as such, one does not need new arguments for the coarse and fine blow-up situations.

The main result of this section is the following $GC^{1,\gamma}$ boundary regularity statement for $v\in \FB(\BC^{(0)})$:

\begin{theorem}\label{thm:boundary-reg}
	Let $\BC^{(0)}\in \FL_S\cap \FL_I$, where $I\in \{0,1,2\}$. Then, there exists $\gamma = \gamma(n,Q)\in (0,1)$ such that if $v\in \FB(\BC^{(0)})$, then $v\in GC^{1,\gamma}(\overline{H\cap B_{1/8}(0)})$. Moreover, $\left.v_s\right|_{\del H}\equiv 0$, and we have the estimate:
	$$\rho^{-n-2}\int_{B_{\rho}(z)\cap H}\G(v,\ell_z)^2\leq C\rho^{2\gamma}\int_{B_{1/2}(0)\cap H}|v|^2$$
	for every $\rho\in (0,1/8]$ and $z\in \overline{H}\cap B_{1/8}$; here, $\ell_z = (\ell^1_z,\dotsc,\ell_z^N)$ is $\ell_z^i:= v^i_a(z) + (x-z)\cdot Dv^i(z)$, and $C = C(n,Q)\in (0,\infty)$.
\end{theorem}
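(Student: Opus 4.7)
The strategy is to establish an $L^2$ excess decay estimate at each boundary point $z\in\del H\cap B_{1/8}$, then upgrade to the claimed $GC^{1,\gamma}$ regularity via a standard Campanato-type iteration combined with the interior $GC^{1,\alpha}$ regularity supplied by $(\FB2)$ and Theorem \ref{thm:MW4}. First I would treat the average part $v_a$ separately: each $v^i_a$ is harmonic in $H\cap B_1$ by $(\FB2)$ and Theorem \ref{thm:MW4}, and by $(\FB3)$ has common smooth boundary trace $\kappa$ on $\del H\cap B_1$, so standard boundary regularity for harmonic functions with smooth Dirichlet data gives $v_a\in C^{1,\alpha}$ up to $\del H\cap B_{1/4}$. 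Subtracting $v_a$, it then suffices to prove the decay estimate for the singular part $v_s:=v-v_a$, which in particular satisfies $\left.v_s\right|_{\del H}\equiv 0$.

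Next, I would establish the excess decay via a blow-up contradiction. Assume the estimate fails: there exist $v_m\in \FB(\BC^{(0)})$, boundary points $z_m\in \overline{H}\cap B_{1/8}$, and scales $\sigma_m$ violating the conclusion by arbitrarily large amounts. Using $(\FB5\textnormal{I})$ and $(\FB5\textnormal{II})$, rescale and normalize to produce $\tilde v_m\in \FB(\BC^{(0)})$ of unit $L^2$-norm on $B_1\cap H$ with vanishing average and zero boundary trace. By compactness $(\FB6)$, pass to a subsequence converging to a limit $v^*\in \FB(\BC^{(0)})$ strongly in $L^2_{\textnormal{loc}}(B_1\cap\overline{H})$ and weakly in $W^{1,2}_{\textnormal{loc}}$. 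The goal is to show $v^*$ must be (or be sufficiently well-approximated by) a split linear function $v_*$ of the form described in $(\FB7)$, i.e.\ $v^{*,i}\equiv 0$ for $i\leq N-I$ and $v^{*,i}(x)=\sum^Q_{j=1}\llbracket a^i_j x^2\rrbracket$ for $i>N-I$ with $\sum_j a^i_j=0$.

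The classification of $v^*$ is the main technical step. I would use $(\FB8)$ combined with $(\FB3)$ and $(\FB4)$ to establish monotonicity of an Almgren-type frequency function $N_{v^*,0}(\rho)$ at the boundary origin (this is precisely why the squash and squeeze identities of $(\FB8)$ are formulated up to the boundary, as will be proved in Section \ref{sec:squash-squeeze}). Monotonicity combined with the homogeneity analysis afforded by $(\FB4)$ forces $v^*$ to be homogeneous of some degree $N_0\geq 1$ about $0$. If $N_0>1$, the contradiction is immediate since $v^*$ then decays faster than the assumed failure rate; if $N_0=1$, the zero boundary trace and zero average conditions force the $i\leq N-I$ components (which are harmonic) to vanish, while in the $i>N-I$ slots the only admissible degree-one multi-valued $\A_Q$ homogeneous functions with zero average vanishing on $\del H$ are precisely the split linear ones of the form $v_*$. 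The main obstacle is this rigidity classification of degree-one blow-ups together with ensuring the boundary frequency is genuinely well-defined and strictly $\geq 1$; ruling out subtler contributions from the low-multiplicity components demands carefully exploiting the weighted $L^2$ decay of $(\FB3)$ and the strong convergence provided by $(\FB6)$.

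Once $v^*$ is identified with a split linear $v_*$, hypothesis $(\FB7)$ applies to $\tilde v_m$ for all sufficiently large $m$, producing a uniform $GC^{1,\beta}$ bound on $B_{1/2}\cap H$. Unscaling yields the required excess decay at the original scale $\sigma_m$ around $z_m$, contradicting the assumed failure and establishing a dichotomy: at every boundary point and every scale, the excess either already satisfies the decay or we reduce to $(\FB7)$. Iterating this excess decay across dyadic scales at each $z\in\overline{H}\cap B_{1/8}$, combined with the interior $GC^{1,\alpha}$ regularity, a Campanato-type criterion then yields the claimed $GC^{1,\gamma}$ regularity up to the boundary with $\gamma=\gamma(n,Q)\in (0,1)$ and the stated decay estimate.
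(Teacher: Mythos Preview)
Your overall architecture---reduce boundary regularity to an excess-decay estimate, establish the decay by a blow-up contradiction whose core is the classification of homogeneous degree one elements, then iterate via Campanato---is exactly the paper's route (the paper defers the iteration to \cite{minterwick,minter-campanato} and isolates the classification as Lemma~\ref{lemma:classification}).

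The gap is in the classification step itself. You assert that once $v^*$ is homogeneous of degree one with zero average and zero boundary trace, ``the only admissible degree-one multi-valued $\A_Q$ homogeneous functions with zero average vanishing on $\del H$ are precisely the split linear ones,'' but you give no mechanism to prove this, and it is not immediate: a priori such a $v^{*,i}$ for $i>N-I$ could have branch points or classical singularities on $\del H$ (or in the interior) away from the origin. The paper's proof of this rigidity (Lemma~\ref{lemma:classification}) requires substantial additional ideas you do not mention: one introduces the translation-invariance subspace $T_i(v)\subset\del H$ and runs a dimension-reduction, choosing a counterexample maximising $\sum_i\dim T_i(v)$ subject to $\dim T_{i_*}(v)\leq n-2$ for some $i_*$; maximality plus a further boundary blow-up then yields $GC^1$ regularity of $v_{i_*}$ up to $\del H$ \emph{away from} $T_{i_*}(v)$. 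Only at this point can $(\FB8)$ be invoked, and since $T_{i_*}(v)$ has vanishing $2$-capacity the squash/squeeze identities extend across the spine, giving frequency monotonicity at \emph{every} boundary point (your proposal only produces it at the origin). One then uses $(\FB4)$ and $(\FB7)$ to force every branch point and every boundary classical singularity off the spine to have frequency $\geq 1$, hence to lie in $T_{i_*}(v)$---a contradiction---so $v_{i_*}$ is classically harmonic on $\overline H\setminus T_{i_*}(v)$; quotienting by the spine, odd reflection, and removability of the isolated singularity finish the job.

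Two smaller points. First, in the paper $(\FB7)$ is used \emph{inside} the classification lemma (to show boundary classical singularities lie in the spine), not after it; once $v^*$ is known to be linear, the excess decay and contradiction follow directly from strong $L^2$ convergence without invoking $(\FB7)$ on $\tilde v_m$. Second, your proposed post-classification use of $(\FB7)$ is awkward because $(\FB7)$ requires $v_*^i\equiv 0$ for $i\leq N-I$, whereas your limit $v^*$ may have nonzero split-linear low-multiplicity components after subtracting only the average.
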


In the same way as seen in \cite{minterwick} and \cite{minter-campanato} (which we do not detail and refer the reader to \cite{minterwick}), this result will follow once one proves a suitable classification of the homogeneous degree one elements of $\FB(\BC^{(0)})$, namely:

\begin{lemma}\label{lemma:classification}
	Suppose that $v\in \FB(\BC^{(0)})$ is homogeneous of degree one in $H\cap B_1$. Then, $v$ is a linear function on $\overline{H}\cap B_1$, and moreover the average-free part $(v_i)_f$ takes the form $x\mapsto \sum^{q_i}_{\alpha=1}\llbracket a^\alpha_i x^2\rrbracket$ for each $i$, where $a^\alpha_i\in \R$.
\end{lemma}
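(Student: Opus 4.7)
The plan is to separate each component $v^i$ into its average $v^i_a$ and its average-free part and classify each separately. First, $(\FB2)$ (respectively Theorem~\ref{thm:MW4}) gives that each $v^i_a$ is harmonic on $H\cap B_1$, and by hypothesis it is $1$-homogeneous. From $(\FB3)$, its trace on $\del H$ agrees with the smooth single-valued function $\kappa^i$, which $1$-homogeneity forces to be linear. A $1$-homogeneous harmonic function on the $n$-dimensional half-ball $H\cap B_1$ with linear Dirichlet data on the flat boundary $\del H$ is itself linear (it equals the linear extension of $\kappa^i$ plus a multiple of $x^2$, the only $1$-homogeneous harmonic vanishing on $\del H$), so each $v^i_a$ is linear.

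For each $i \leq N-I$ (so $q_i < Q$), $(\FB2)$ gives that each sheet $v^i_\alpha$ is individually harmonic with common boundary data $\kappa^i$ by $(\FB3)$. Applying the above argument sheetwise, $v^i_\alpha(x) = \kappa^i + c^i_\alpha x^2$ for constants $c^i_\alpha$, so the average-free part is $\sum_\alpha\llbracket (c^i_\alpha - \overline{c^i})\, x^2\rrbracket$, of the required form.

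The main work is the multi-valued case $i > N-I$ (where $q_i = Q$). Setting $u := v^i - v^i_a$, we obtain an average-zero, $Q$-valued, $1$-homogeneous function on $H\cap B_1$, vanishing on $\del H$ and satisfying the interior conclusions of Theorem~\ref{thm:MW4}. I would combine the interior squash and squeeze (Pohozaev-type) identities from Theorem~\ref{thm:MW4} with $1$-homogeneity to derive a spherical rigidity statement. Using cutoffs approximating $\chi_{B_\rho}$ in the squash identity, together with Euler's identity $x\cdot Du^\alpha = u^\alpha$ (valid branchwise on the regular set $\CR_u$, where each sheet of $u$ is smooth and $1$-homogeneous), yields
$$\int_{B_\rho\cap H}|Du|^2 \;=\; \rho^{-1}\int_{\del B_\rho\cap H}|u|^2.$$
The squeeze identity applied with the radial vector field $x\zeta$ (whose $x^2$-component vanishes on $\del H$) combines with Euler's identity to give a second scaling identity, and elimination between the two yields
$$(n-1)\int_{S^{n-1}_+}|g|^2 \;=\; \int_{S^{n-1}_+}|\nabla_\omega g|^2,$$
where $g(\omega) := u(\omega)$ is the multi-valued angular profile of $u$ and $S^{n-1}_+ := S^{n-1}\cap H$. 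Since $g$ vanishes branchwise on $\del S^{n-1}_+ = S^{n-1}\cap\del H$ and the first Dirichlet eigenvalue of $-\Delta_\omega$ on $S^{n-1}_+$ is exactly $n-1$, with one-dimensional eigenspace spanned by the height coordinate $x^2/|x|$, the variational characterisation of the first eigenvalue forces each branch $g_\alpha$ to be a multiple of $x^2/|x|$, whence $u^\alpha(x) = a^\alpha x^2$ and the lemma follows.

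The main obstacle will be justifying these variational identities up to $\del H$: the identities from Theorem~\ref{thm:MW4} are interior and apply only for test functions compactly supported in $H\cap B_1$, while $(\FB8)$ is unavailable here since it presupposes the $GC^1$ boundary regularity we are trying to establish. I would address this by using cutoffs that vanish only in thin collars about $\del H\cap B_\rho$ and passing to the limit using the quantitative reverse-Hardy-type decay of $(\FB3)$ at $z=0$ (which controls the $L^2$-integral of $u$ against $|x|^{-n-3/2}$) to absorb the residual boundary contributions. A secondary technical point is that Euler's identity is applied branchwise on $\CR_u$ rather than globally; this is handled because the singular set $\B_u\cup\CC_u$ of the $GC^{1,\alpha}$ function $u$ has vanishing $\H^n$-measure, so the identities persist after integration, after which the eigenfunction rigidity completes the classification.
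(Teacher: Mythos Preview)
Your treatment of the averaged parts and of the low-multiplicity components ($q_i<Q$) is correct and matches the paper. The gap is in the multiplicity-$Q$ case, and the obstacle you flag is real; your proposed resolution does not close it.

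The remainder terms produced when you cut off near $\del H$ in the squash and squeeze identities involve $|Du|^2$ on a collar $\{0<x^2<\epsilon\}$, weighted by $\epsilon^{-1}$ (see the explicit computations in Section~\ref{sec:squash-squeeze}). Property $(\FB3)$ gives weighted $L^2$ control of $u$ relative to $|x-z|$ for $z\in\del H$, not of $Du$, and not relative to $x^2=\dist(\cdot,\del H)$. Without a bound of the form $\int_{\{0<x^2<\epsilon\}}|Du|^2=o(\epsilon)$ --- which is precisely what $GC^1$ boundary regularity supplies --- these remainders need not vanish, and your spherical identity $(n-1)\int_{S^{n-1}_+}|g|^2=\int_{S^{n-1}_+}|\nabla_\omega g|^2$ is not established. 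Note that $(\FB1)$ only gives $v^i\in W^{1,2}_{\text{loc}}(B_1\cap H)$, so $\int_{B_\sigma\cap H}|Du|^2$ is not known to be finite a priori.

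The paper avoids this circularity by a dimension-reduction argument on the spine $T_i(v):=\{z\in\overline{H}:v^i(z+\cdot)=v^i\}\subset\del H$. One assumes for contradiction that some $d_i:=\dim T_i(v)\leq n-2$ and selects, among all homogeneous degree-one elements of $\FB(\BC^{(0)})$, one with $\sum_i d_i$ maximal subject to this constraint. Blowing up at points of $\del H\setminus T_i(v)$ via $(\FB5)$, $(\FB6)$ and invoking $(\FB7)$ together with maximality shows that $v_i$ is $GC^1$ up to $\del H$ away from $T_i(v)$; this is what legitimises $(\FB8)$ and hence the boundary squash and squeeze identities for test functions supported away from $T_i(v)$. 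Since $\dim T_i(v)\leq n-2$, that set has vanishing $2$-capacity and is then excised, giving frequency monotonicity at every $y\in\overline{H}$. From this, $(\FB4)$, $(\FB7)$ and the interior regularity in $(\FB2)$ force all branch and classical singularities of $v_i$ to lie in $T_i(v)$, so $v_i$ is classically harmonic on $\overline{H}\setminus T_i(v)$; quotienting by $T_i(v)$, reflecting oddly across $\del H$, and removing the resulting isolated singularity at the origin yields linearity and the contradiction $d_i=n-1$.

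A secondary point: even if your spherical identity were available, the eigenfunction rigidity you invoke is a single-valued statement. Since the branches of $g$ are not globally well-defined on $S^{n-1}_+$ when branch points are present there, equality in the Rayleigh quotient for the $Q$-valued $g$ does not immediately force each branch to be a first eigenfunction; you would need an additional step (for instance equality in Kato's inequality for $|g|$, together with an argument ruling out branching on the link) to reach the conclusion.
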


\begin{proof}
	Following the corresponding argument in \cite{minter-campanato}, one may deduce the following. Suppose $v\in \FB(\BC^{(0)})$ is homogeneous of degree 1. Then by $(\FB3)$ we have $v\in C^{0,\mu}(\overline{H \cap B_1})$ for some $\mu = \mu(n,Q)$, and for each $i\in\{1,2,\dotsc,N\}$, the average $(v_i)_a$ is necessarily a linear function as it is harmonic; thus by $(\FB5\text{II})$ we may assume that $(v_i)_a\equiv 0$ for all $i$, and so in particular from $(\FB4)$ it now follows that $\left.v_i\right|_{B_1\cap \del H}\equiv q_i\llbracket 0\rrbracket$; in particular when $q_i<Q$, as $v_i^\alpha$ is harmonic and smooth up to the boundary with zero boundary values, we may reflect it across $\del H$ to see that it is necessarily linear. Without loss of generality extend $v$ to all of $H$ by homogeneous degree one extension. Now if we set
	$$T_i(v):=\{z\in \overline{H}:v^i(z+x) = v^i(x)\text{ for all }x\in\overline{H}\}$$
	then we must have $T_i(v)\subset \del H$, and $\dim(T_i(v))\leq n-1$; if $\dim(T_i(v)) = n-1$ then $v$ takes the desired form, so suppose $\dim(T_i(v))\leq n-2$ for some $i$; this is what we need to contradict to prove the result. If we write $d_i(v):= \dim(T_i(v))$, choose $v$ such that $\sum_i d_i(v)$ is maximal with the property that $d_i(v)\leq n-2$ for some $i\in \{1,\dotsc,N\}$. Then, one can show that for any $i_*$ with $d_{i_*}(v)\leq n-2$, that $v_{i_*}$ is $GC^1(\overline{U})$ for each bounded open set $U\subset H\setminus T_{i_*}(v)$ with $\dist(\overline{U},T_{i_*}(v))>0$, i.e. $v_{i_*}$ is $GC^1$ up-to-the-boundary of $H$ except on its spine $T_{i_*}(v)$. (This finishes the recall from \cite{minter-campanato}.)
	
	Thus, one may deduce from $(\FB8)$ that for any $\Omega\subset H\cap B_1$ open with $\dist(\overline{\Omega},T_{i_*}(v))>0$ that for any $\zeta\in C^1_c(\overline{\Omega}\cap B_{3/4};\R)$:
	$$\int_{H\cap B_1}|Dv_i|^2\zeta = -\int_{H\cap B_1}\sum^{q_i}_{\alpha=1}v_i^\alpha Dv_i^\alpha\cdot D\zeta$$
	and for any $\psi = (\psi^2,\dotsc,\psi^{n+1})\in C^1_c(\overline{\Omega}\cap B_{3/4};\R^n)$ with $\left.\psi^2\right|_{\{x^2=0\}}\equiv 0$:
	$$\int_{H\cap B_1}\sum^{q_i}_{\alpha=1}\sum^n_{j,k=1}\left(|Dv_i|^2\delta_{jk}-2D_jv_i^\alpha D_kv^\alpha_i\right)D_j\psi^k = 0$$
	for each $i$ such that $d_i(v)\leq n-2$ (we need not worry when $d_i(v)=n-1$ as $v_i$ then already takes the desired form). But for such $i$, $T_i(v)$ has vanishing $2$-capacity and so by a standard excision argument we can extend this to any $\zeta\in C^1_c(\overline{B_{3/4}(0)};\R)$ and $\psi = (\psi^2,\dotsc,\psi^{n+1})\in C^1_c(\overline{B_{3/4}(0)};\R^n)$ with $\left.\psi^2\right|_{\{x^2=0\}}\equiv 0$. Armed with these, we can now use standard argument (see e.g. \cite{simonwick-frequency}) that the frequency $N_{v_i}(y)$ is well-defined at every point $y\in \overline{H}$; it should be noted that in these arguments we take $\psi$ to be of the form $\psi^j = x^j\phi(|x|)$ for a suitable cut-off function $\phi$, and thus the condition $\left.\psi^2\right|_{\{x^2=0\}}$ is obeyed here\footnote{Note that we may perform an odd reflection of $v$ across $\del H$, in which case the frequency function which appears to be at a boundary point is in fact at simply the frequency at an interior point of the reflected function.}. In particular, as $v_i$ is homogeneous of degree one we know $N_{v_i}(0) = 1$, and moreover by standard properties of frequency monotonicity we know that if $y\in \overline{H}$ has $N_{v_i}(y)\geq 1$, then $v_i$ is translation invariant with respect to directions parallel to $y$, i.e. $y\in T_{i}(v)$. 
	
	But now $(\FB4)$ tells us that every branch point $y\in \B_{v_i}\cap (\del H\setminus T_{i}(v))$ has $N_{v_i}(y)\geq 1$, and thus must belong to $T_{i}(v)$; moreover we know the same is true for interior branch points by Theorem \ref{thm:MW4}. Moreover by $(\FB7)$ and Theorem \ref{thm:MW4}, the same is true for any classical singularities $y\in \overline{H}\setminus T_{i}(y)$, and thus we see that $v_i$ is regular (and thus classically harmonic) on $\overline{H}\setminus T_{i}(v)$. Hence, after we quotient out by the translation invariant subspace $T_{i}(v)$, each $v_i$ with $d_i(v)\leq n-2$ is determined by a homogeneous of degree one function $g_i:H_i\to \A_{Q}(\R)$, where $H_i$ is the half-space $\{(x^1,\dotsc,x^k):x^1>0\}$ in $\R^{k}$ for some $k\geq 2$, and moreover $g_i$ is smoothly harmonic on $\overline{H}_i\setminus 0$ with $|g_i|\equiv 0$ on $\del H_i$, and $C^{0,\mu}$ on $\overline{H}_i$. Thus, if we apply an odd reflection across $\del H$, we see that $g_i$ can be extended a function on $\R^k$ which is harmonic on $\R^k\setminus\{0\}$ and continuous on all of $\R^k$. As $k\geq 2$ and $g_i$ is continuous at $0$, the point $0$ is a removable singularity for $g_i$, and thus $g_i$ is extendable to a homogeneous degree one harmonic function on all of $\R^k$. But then $g_i$ must be linear, and as $|g_i|\equiv 0$ on $\del H_i$, it follows that $g_i$, and hence $v_i$, takes the desired form; in particular $d_i(v) = n-1$, a contradiction to our assumption. This completes the proof.
\end{proof}

\section{Boundary Squash and Squeeze Identities for Blow-Ups}\label{sec:squash-squeeze}

Given the results of the previous sections, the proof of Theorem \ref{thm:A} follows the same general argument as seen in \cite{minter-5-2}. Of course, the one difference is that we must establish the new additional property of the coarse blow-up class $\FB(\BC^{(0)})$ and the fine blow-up class $\FB_{3,1;M}(\BC^{(0)})$, namely $(\FB8)$; this will be the focus of the present section. To avoid repetition, we refer the reader to \cite{minter-5-2} for precise definitions of the coarse and fine blow-up classes, as they are simple adaptations of those seen there.

\textbf{Notation:} For $K\subset \overline{H}\cap B_1$ a compact subset, we write $C^1_K(B_1)$ for the $C^1(B_1)$ functions which have support contained in $K$. We write $C^1_{c}(\overline{H}\cap B_1)$ for the functions which belong to $C^1_K(B_1)$ for some compact $K\subset \overline{H}\cap B_1$.

We will prove the following results:

\begin{lemma}[$GC^1$ Boundary Squash Identity]
	Let $v\in \FB(\BC^{(0)})$ and suppose $v$ is $GC^1$ on some $\overline{\Omega}\subset\overline{H}\cap B_1$, where $\Omega\subset H\cap B_1$ is open. Then, for any $\zeta\in C^1_c(\overline{\Omega}\cap B_1)$ we have for each $i=1,\dotsc,N$,
	$$\int_{H\cap B_1}|D(v_i)_f|^2\zeta = -\int_{H\cap B_1}\sum_\alpha (v_i)_f^\alpha D(v_i)_f^\alpha\cdot D\zeta.$$
\end{lemma}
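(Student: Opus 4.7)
My plan is to reformulate the claimed identity through the permutation-symmetric scalar
$$F := |(v_i)_f|^2 = \sum_{\alpha=1}^{q_i}\bigl((v_i)_f^\alpha\bigr)^2,$$
which is invariant under relabelling of the sheets of $(v_i)_f$ and therefore extends to a single-valued $C^1$ function on $\overline\Omega$ under the $GC^1$ hypothesis, even across the classical singularity set $\CC_{(v_i)_f}$. Similarly $G := |D(v_i)_f|^2 = \sum_\alpha |D(v_i)_f^\alpha|^2$ and the vector field $\tfrac{1}{2}DF = \sum_\alpha (v_i)_f^\alpha D(v_i)_f^\alpha$ are continuous and single-valued on $\overline\Omega$. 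Crucially, $(\FB3)$ forces all sheets of $v^i$ to take the common boundary trace $\kappa^i$ on $\del H$, whence $(v_i)_f^\alpha|_{\del H\cap\overline\Omega}\equiv 0$ for each $\alpha$; hence $F\equiv 0$ on $\del H\cap\overline\Omega$, and in turn $\partial F/\partial\nu = 2\sum_\alpha (v_i)_f^\alpha\,\partial(v_i)_f^\alpha/\partial\nu \equiv 0$ on $\del H\cap\overline\Omega$ automatically, since every term carries a factor of $(v_i)_f^\alpha$ vanishing there.

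The main step is to establish the weak Bochner identity $\Delta F = 2G$ on $\overline\Omega\cap B_1$, tested against $\zeta\in C^1_c(\overline\Omega\cap B_1)$. On the regular set $\CR_{(v_i)_f}$ each sheet is classically harmonic (by $(\FB2)$, via Theorem~\ref{thm:MW4} when $i>N-I$ or directly when $i\leq N-I$), so pointwise $\Delta F = 2G$. To promote this to a weak identity for $\zeta$ whose support can touch both $\CC_{(v_i)_f}$ and $\del H$, I would insert two cutoffs: $\chi_\delta$ vanishing in a $\delta$-tubular neighbourhood of $\B_{(v_i)_f}\cup\CC_{(v_i)_f}$, and $\chi_\epsilon$ vanishing in an $\epsilon$-neighbourhood of $\del H$; then apply classical sheet-wise integration by parts on the support of $\zeta\chi_\delta\chi_\epsilon$, where each $(v_i)_f^\alpha$ is smooth and harmonic. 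The branch-cutoff error vanishes as $\delta\to 0$ because $\dim_{\H}\B_{(v_i)_f}\leq n-2$ (Theorem~\ref{thm:MW4}) gives zero $W^{1,2}$-capacity; the boundary-cutoff error vanishes as $\epsilon\to 0$ because $(v_i)_f^\alpha = O(x^2)$ near $\del H$ (from $C^1$-regularity and the vanishing boundary trace), so the integrand $(v_i)_f^\alpha D(v_i)_f^\alpha\cdot D\chi_\epsilon\cdot\zeta$ is bounded uniformly on the slab $\{\epsilon<x^2<2\epsilon\}$ of $n$-volume $\lesssim\epsilon$, yielding an $O(\epsilon)$ estimate.

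With $\Delta F = 2G$ established weakly, one concludes via a single classical integration by parts:
$$\int_{H\cap B_1}\zeta\,\Delta F \;=\; -\int_{H\cap B_1} DF\cdot D\zeta \;+\; \int_{\del H\cap\overline\Omega}\zeta\,\frac{\partial F}{\partial\nu},$$
where the boundary term vanishes by the computation in the first paragraph. Substituting $\Delta F = 2G$ and $DF = 2\sum_\alpha (v_i)_f^\alpha D(v_i)_f^\alpha$ and dividing by $2$ gives precisely the claimed squash identity. The main obstacle in this plan is the $\delta\to 0$ limit at the codimension-one set $\CC_{(v_i)_f}$: the individual sheets $(v_i)_f^\alpha$ do not have continuous gradients across $\CC_{(v_i)_f}$, so a naive sheet-wise integration by parts produces apparent $\mathcal{H}^{n-1}$-integrals on $\CC_{(v_i)_f}$ that do not a priori cancel. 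The resolution relies on the $GC^{1,\alpha}$ classical-singularity structure (Definition~\ref{defn:classical-singularity}): sheets meet pairwise at $\CC_{(v_i)_f}$ via a swap permutation $\sigma$ with $(v_i)_f^\alpha = (v_i)_f^{\sigma(\alpha)}$ on $\CC_{(v_i)_f}$, so the apparent conormal contributions from the two sides pair up and cancel exactly in the unordered sum. This is precisely the mechanism that allows the symmetric scalars $F$, $DF$, and $G$ to be $C^1$, $C^0$, and $C^0$ respectively across $\CC_{(v_i)_f}$ — the same symmetry that underlies the first paragraph — and makes the weak Bochner identity legitimate.
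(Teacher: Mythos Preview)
Your approach is correct in substance but takes a longer route than the paper. The paper's proof simply invokes the interior squash identity already supplied by $(\FB2)$ (via Theorem~\ref{thm:MW4}), which holds for any test function compactly supported in $H\cap B_1$ --- the handling of $\B_{(v_i)_f}$ and $\CC_{(v_i)_f}$ is therefore \emph{already done}. One then inserts a single cutoff $\eta_\epsilon(x^2)$ near $\del H$, tests with $\zeta\eta_\epsilon$, and lets $\epsilon\to 0$; the only error term is $\int \zeta\,\eta_\epsilon'(x^2)\,(v_i)_f^\alpha\,D_2(v_i)_f^\alpha$, which vanishes in the limit because $(v_i)_f^\alpha\to 0$ on $\del H$ (the paper cites $(\FB4)$ for this, though your use of $(\FB3)$ also works) and $D(v_i)_f$ is bounded by the $GC^1$ hypothesis. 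Your treatment of this boundary term is essentially identical.

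What you do differently is re-derive the interior identity from sheet-wise harmonicity, which forces you to excise $\B_{(v_i)_f}\cup\CC_{(v_i)_f}$ as well. This works, but the explanation you give for the classical-singularity term is more elaborate than necessary: the ``swap permutation/conormal cancellation'' mechanism is not really the point. Since at every point of $\CC_{(v_i)_f}$ \emph{all} sheets of $v_i$ coincide, one has $(v_i)_f^\alpha\equiv 0$ there; hence $\tfrac{1}{2}DF=\sum_\alpha (v_i)_f^\alpha D(v_i)_f^\alpha$ tends to $0$ at $\CC_{(v_i)_f}$ (with $D(v_i)_f^\alpha$ bounded by the $C^{1,\alpha}$ sheet structure), and the $\chi_\delta$-error vanishes for exactly the same reason as the boundary term --- no pairing argument needed. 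Finally, your third paragraph (``a single classical integration by parts'') is redundant: once the cutoff limits are taken, the weak identity $-\int DF\cdot D\zeta = \int 2G\,\zeta$ \emph{is} the claimed statement; there is no further boundary integral to account for.
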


\begin{proof}
	Let $\eta_\epsilon:\R\to \R$ be a smooth even function obeying $\eta_\epsilon\equiv 0$ on $[0,\epsilon)$, $\eta_\epsilon\equiv 1$ on $[2\epsilon,\infty)$, and $|\eta^\prime_\epsilon|\leq C\epsilon^{-1}$, where $C \in (0,\infty)$ is an absolute constant. Let $\zeta\in C^1_c(\overline{\Omega}\cap B_1)$. Then we know that $\zeta(x)\eta(x^2)$ is a suitable test function for the squash identity for any given $v_i$, i.e.
	$$\int_{\Omega\cap B_1}|D(v_i)_f|^2\zeta\eta_\epsilon = -\int_{\Omega\cap B_1}\sum_\alpha \eta_\epsilon(x^2)(v_i)_f^\alpha D(v_i)_f^\alpha\cdot D\zeta + \zeta\eta^\prime_\epsilon(x^2)(v_i)_f^\alpha D_2(v_i)^\alpha_f.$$
	As $v_i$ is $GC^1$ on $\overline{\Omega}$, when we take $\epsilon\downarrow 0$ we have by the dominated convergence theorem,
	$$\int_{\Omega\cap B_1}|D(v_i)_f|^2\zeta\eta_\epsilon\to \int_{\Omega\cap B_1}|D(v_i)_f|^2\zeta$$
	and
	$$\int_{\Omega\cap B_1}\sum_\alpha \eta_\epsilon(x^2)(v_i)_f^\alpha D(v_i)_f^\alpha \cdot D\zeta\to \int_{\Omega\cap B_1}\sum_\alpha (v_i)^\alpha_f D(v_i)_f^\alpha\cdot D\zeta$$
	which are the desired terms; so we just need to show that the remaining term converges to $0$ as $\epsilon\downarrow 0$. But note that
	$$\left|\int_{\Omega\cap B_1}\sum_\alpha\zeta\eta^\prime(v_i)^\alpha_fD_2(v_i)_f^\alpha\right| \leq Q\cdot \sup_\Omega|\zeta|\cdot C\epsilon^{-1}\cdot C\epsilon\cdot \sup_{\Omega\cap\{0<x^2<\epsilon\}}|(v_i)^\alpha_fD_2(v_i)^\alpha_f|$$
	and we know by $(\FB4)$ that $\left.(v_i)_f\right|_{\{x^2=0\}}\equiv 0$, and thus as $v$ is $GC^1$ on $\overline{\Omega}$, we know that $D_2(v_i)_f$ is bounded and $\sup_{\Omega\cap\{0<x^2<\epsilon\}}|(v_i)^\alpha_f|\to 0$ as $\epsilon\downarrow 0$. Thus this term $\to 0$ as $\epsilon\downarrow 0$, which completes the proof.
\end{proof}

\begin{lemma}[$GC^1$ Boundary Squeeze Identity]
	Let $v\in \FB(\BC^{(0)})$ and suppose $v$ is $GC^1$ on some $\overline{\Omega}\subset \overline{H}\cap B_1$, where $\Omega\subset H\cap B_1$ is open. Then, for any $\zeta = (\zeta^2,\dotsc,\zeta^{n+1})\in C^1_c(\overline{\Omega}\cap B_1;\R^n)$ with $\left.\zeta^2\right|_{\del H} \equiv 0$, we have for each $k=1,\dotsc,N$,
	$$\int_{H\cap B_1} \sum_\alpha\sum^n_{i,j=1}\left(|D(v_k)_f|^2\delta_{ij} - 2D_i(v_k)^\alpha_f D_j(v_k)^\alpha_f\right)D_i\zeta^j = 0.$$
\end{lemma}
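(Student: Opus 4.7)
My plan is to mirror the proof of the boundary squash identity given above: pre-multiply an interior squeeze identity for $(v_k)_f$ by a cutoff $\eta_\epsilon(x^2)$ supported away from $\del H$, and pass to the limit $\epsilon \downarrow 0$, using the boundary vanishing $\left.\zeta^2\right|_{\del H} \equiv 0$ and $\left.(v_k)_f\right|_{\del H} \equiv 0$ (the latter from $(\FB3)$) to control the resulting error terms. First I would record the interior squeeze identity for $(v_k)_f$: for $\psi \in C^1_c(H \cap B_{3/4}; \R^n)$,
\[
\int_{H \cap B_1} \sum_\alpha \sum_{i,j=1}^n \bigl( |D(v_k)_f|^2 \delta_{ij} - 2 D_i(v_k)_f^\alpha D_j(v_k)_f^\alpha \bigr) D_i \psi^j = 0.
\]
This identity follows from the interior squeeze identity for $v_k$ furnished by $(\FB2)$ (via Theorem \ref{thm:MW4} when $k > N-I$, or standard stress-energy for harmonic functions when $k \leq N-I$), the fact that the average $(v_k)_a$ is smoothly harmonic and so itself satisfies the single-valued squeeze identity, and the orthogonal decompositions $|Dv_k|^2 = q_k |D(v_k)_a|^2 + |D(v_k)_f|^2$ and $\sum_\alpha D_i v_k^\alpha D_j v_k^\alpha = q_k D_i(v_k)_a D_j(v_k)_a + \sum_\alpha D_i(v_k)_f^\alpha D_j(v_k)_f^\alpha$.

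With $\eta_\epsilon$ the even cutoff used in the squash identity proof, and $\zeta$ as in the statement, set $\psi_\epsilon^j(x) := \zeta^j(x) \eta_\epsilon(x^2)$. Then $\psi_\epsilon$ has compact support in $\Omega \cap B_1 \cap \{x^2 > \epsilon\}$, hence is admissible in the interior identity. Using $D_i \psi_\epsilon^j = \eta_\epsilon D_i \zeta^j + \zeta^j \eta'_\epsilon(x^2) \delta_{i2}$, the identity splits as $0 = A_\epsilon + B_\epsilon$, where $A_\epsilon$ is the integral with factor $\eta_\epsilon D_i \zeta^j$ and $B_\epsilon$ is the error with factor $\zeta^j \eta'_\epsilon(x^2) \delta_{i2}$. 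By the $GC^1$ hypothesis and dominated convergence, $A_\epsilon$ converges to the desired integral as $\epsilon \downarrow 0$. It therefore remains to show $B_\epsilon \to 0$, which is the main step of the argument.

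To control $B_\epsilon$, I would split it according to whether $j = 2$ or $j \neq 2$. For the $j = 2$ piece, the relevant stress-tensor entry $\sum_\alpha(|D(v_k)_f|^2 - 2|D_2(v_k)_f^\alpha|^2)$ is bounded on $\overline{\Omega}$ by $GC^1$ regularity; since $\left.\zeta^2\right|_{\del H} \equiv 0$ and $\zeta^2 \in C^1$, one has $|\zeta^2(x)| \leq C x^2 \leq 2 C \epsilon$ on $\spt \eta'_\epsilon \subset \{\epsilon < x^2 < 2\epsilon\}$, so that $|\zeta^2 \eta'_\epsilon(x^2)|$ is uniformly bounded while the support has $n$-dimensional measure $O(\epsilon)$, giving a $O(\epsilon) \to 0$ bound. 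For $j \neq 2$, the derivative $D_j$ is tangential to $\del H$, so $\left.(v_k)_f^\alpha\right|_{\del H} \equiv 0$ combined with the $GC^1$ hypothesis forces $\left.D_j (v_k)_f^\alpha\right|_{\del H} \equiv 0$, and hence $\sup_{0 < x^2 < 2\epsilon} |D_j (v_k)_f^\alpha| \to 0$ as $\epsilon \downarrow 0$; with $|D_2(v_k)_f^\alpha|, |\zeta^j| \leq C$ and $|\eta'_\epsilon| \leq C/\epsilon$ on a set of measure $O(\epsilon)$, this piece is $o(1) \to 0$. The main obstacle is precisely this last step: establishing that tangential derivatives of $(v_k)_f$ vanish on the spine, which is what allows the non-product $j \neq 2$ error term to decay, and which rests essentially on the $GC^1$ regularity up-to-the-boundary assumption in conjunction with the single-valued boundary trace furnished by $(\FB3)$.
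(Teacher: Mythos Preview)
Your proposal is correct and follows essentially the same route as the paper: multiply the interior squeeze identity by the cutoff $\eta_\epsilon(x^2)$, pass the main term through by dominated convergence, and kill the error $B_\epsilon$ by splitting into the $j=2$ piece (handled via $\zeta^2|_{\del H}\equiv 0$) and the $j\neq 2$ pieces (handled via vanishing of the tangential derivatives $D_j(v_k)_f$ on $\del H$). The only cosmetic difference is that you make the reduction from the squeeze identity for $v_k$ to that for $(v_k)_f$ explicit via the orthogonal decomposition, whereas the paper states it directly; also, the paper attributes the boundary vanishing $(v_k)_f|_{\del H}\equiv 0$ to $(\FB4)$ rather than $(\FB3)$, though either suffices under the $GC^1$ hypothesis.
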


\begin{proof}
	Let $\eta_\epsilon:\R\to \R$ be a smooth even function obeying $\eta_\epsilon\equiv 0$ on $[0,\epsilon)$, $\eta_\epsilon\equiv 1$ on $[2\epsilon,\infty)$, and $|\eta_\epsilon^\prime|\leq C\epsilon^{-1}$, where $C\in (0,\infty)$ is an absolute constant. Let $\zeta\in C^1_c(\overline{\Omega}\cap B_1;\R^n)$ have $\left.\zeta^2\right|_{\del H}\equiv 0$. Then once again, we know that $\zeta(x)\eta(x^2)$ is a suitable test function for the squeeze identity for any given $v_k$, i.e.
	$$\int_{H\cap B_1}\sum_\alpha\sum_{i,j=1}^n\left(|D(v_k)_f|^2\delta_{ij} - 2D_i(v_k)_f^\alpha D_j(v_k)_f^\alpha\right)\left(\eta_\epsilon(x^2)D_i\zeta^j + \zeta^j \delta_{i2}\eta_\epsilon^\prime(x^2)\right) = 0.$$
	Once again, as $v_k$ is $GC^1$ on $\overline{\Omega}\cap B_1$, we know that as $\epsilon\downarrow 0$, by the dominated convergence theorem,
	\begin{align*}
	\int_{H\cap B_1}\sum_\alpha\sum_{i,j=1}^n&\left(|D(v_k)_f|^2\delta_{ij} - 2D_i(v_k)_f^\alpha D_j(v_k)_f^\alpha\right)\eta_\epsilon(x^2)D_i\zeta^j\\
	&\hspace{3em} \to \int_{H\cap B_1}\sum_\alpha\sum^n_{i,j=1}\left(|D(v_k)_f|^2\delta_{ij} - 2D_i(v_k)_f^\alpha D_j(v_k)_f^\alpha\right)D_i\zeta^j
	\end{align*}
	and thus we just need to show that the second additional term $\to 0$ as $\epsilon\downarrow 0$; this term can be simplified to
	$$\int_{H\cap B_1}\sum_\alpha \left(|D(v_k)_f|^2 - 2|D_2(v_k)_f|^2\right)\zeta^2\eta_\epsilon^\prime(x^2) - 2\int_{H\cap B_1}\eta_\epsilon^\prime\sum_{j>2}\zeta^jD_2(v_k)_f^\alpha D_j(v_k)_f^\alpha.$$
	The first term here can be bounded by:
	\begin{align*}
		&\left|\int_{H\cap B_1}\sum_\alpha (|D(v_k)_f|^2 - 2|D_2(v_k)_f|^2)\zeta^2\eta^\prime_\epsilon(x^2)\right|\\
		& \hspace{3em} \leq C\epsilon^{-1}\cdot C\epsilon\cdot \sup_{\Omega\cap \{0<x^2<\epsilon\}}|\zeta^2|\cdot \sup_{\Omega}\left||D(v_k)_f|^2 - 2|D_2(v_k)_f|^2\right|
	\end{align*}
	which goes to zero as $\epsilon\to 0$, as $\left.\zeta^2\right|_{\{x^2=0\}} \equiv 0$ and $v_k$ is $GC^1$ on $\overline{\Omega}$, so the second supremum is finite. The last term is
	$$\left|\int_{H\cap B_1}\eta_\epsilon^\prime\sum_{j>2}\zeta^jD_2(v_k)^\alpha_fD_j(v_k)^\alpha_f\right| \leq C\epsilon^{-1}\cdot C\epsilon \cdot \sup_{\Omega}|\zeta^jD_2(v_k)^\alpha_f|\cdot \max_{j>2}\left(\sup_{\Omega\cap\{0<x^2<\epsilon\}}|D_j(v_k)^\alpha_f|\right)$$
	and note that as $v_k$ is $GC^1$ on $\overline{\Omega}$ we know that the $\sup_{\Omega}|\zeta^jD_2(v_k)^\alpha_f|$ is finite, and moreover by $(\FB4)$ we know that $\left. (v_k)_f\right|_{\{x^2=0\}}\equiv 0$, and so $\left.D_j(v_k)_f^\alpha\right|_{\{x^2=0\}}\equiv 0$ for all $j>2$, and thus as $v_k$ is $GC^1$ on $\Omega$, we have $\sup_{\Omega\cap \{0<x^2<\epsilon\}}|D_j(v_k)_f^\alpha| \to 0$ as $\epsilon\to 0$; hence this completes the proof.
\end{proof}

\textbf{Remark:} The above proofs show that we do not need any further boundary estimates (once one has those in the interior) to prove that one can classify the homogeneous degree one blow-ups in this non-flat setting (and hence prove a $GC^{1,\alpha}$-regularity result up-to-the-boundary), as long as one can show that property $(\FB4)$ holds, i.e. density gaps do not occur (or at least can deal with density gaps in an alternative fashion to prove the squeeze identity). In particular, we do not need to prove an energy non-concentration estimate (as was done in \cite{minterwick}) at the boundary. \hfill $\blacktriangle$

\appendix

\section{Boundary Regularity of Certain Dirichlet-Stationary Functions}

In this appendix we note how the proof technique used in Section \ref{sec:blow-up} and Section \ref{sec:squash-squeeze} can be used to generalise the boundary regularity results seen in \cite[Section 3]{minter-campanato} to a larger class of functions which instead of being multi-valued harmonic (in a suitable sense) in the interior, we instead assume they are stationary for the Dirichlet energy in the following sense:

\begin{defn}
	Fix $\Omega\subset\R^n$ an open subset and $q\in \Z_{\geq 1}$. We say $v\in W^{1,2}(\Omega;\A_q(\R^k))$ is \textit{Dirichlet-stationary} if it is stationary for the Dirichlet energy under domain deformations and ambient deformations, namely if both the following conditions hold:
	\begin{enumerate}
		\item [(IV)] Given $\phi\in C^\infty_c(\Omega;\R^n)$, for $\epsilon>0$ sufficiently small, the map $\Phi_\epsilon:\Omega\to \R^n$ given by $\Phi_\epsilon(x) := x+\epsilon\phi(x)$ is a diffeomorphism of $\Omega$ which fixes $\del\Omega$; we assume that for any such $\phi$,
		$$\left.\frac{\ext}{\ext\epsilon}\right|_{\epsilon=0}\int_\Omega |D(v\circ\Phi_\epsilon)|^2 = 0;$$
		\item [(OV)] Given $\psi\in C^\infty(\Omega\times\R^k;\R^k)$ such that $\spt(\psi)\subset\Omega^\prime\times\R^k$ for some $\Omega^\prime\subset\subset\Omega$, if we set $\Psi_\epsilon(x):= \sum_{i=1}^Q\llbracket v_i(x) + \epsilon \psi(x,v_i(x))\rrbracket$, then we assume
		$$\left.\frac{\ext}{\ext\epsilon}\right|_{\epsilon=0}\int_{\Omega}|D\Psi_\epsilon|^2 = 0.$$
	\end{enumerate}
\end{defn}

Here, ``IV'' stands for \textit{inner variation} and ``OV'' stands for \textit{outer variation}. For $v = \sum_{\alpha=1}^Q\llbracket v^\alpha\rrbracket \equiv \sum_\alpha\llbracket (v^\alpha_1,\dotsc,v^\alpha_k)\rrbracket$ Dirichlet-stationary, one may then derive two variational identities (see \cite[Proposition 3.1]{de2010almgren}) from (IV) and (OV), namely the \textit{squeeze} and \textit{squash} identities we have already seen in Section \ref{sec:squash-squeeze}:
\begin{equation}\label{squeeze}
	\int_\Omega \sum_{\alpha=1}^Q\sum_{i,j=1}^n\left(|Dv^\alpha|^2\delta_{ij} - 2D_i v^\alpha D_j v^\alpha\right)D_i\phi^j = 0 \ \ \ \ \text{for any }\phi\in C^\infty_c(\Omega;\R^n);
\end{equation}
and
\begin{equation}\label{squash}
	\int_\Omega \sum_{\alpha=1}^Q\sum_{i=1}^n\sum^k_{p=1}|D_iv_p^\alpha|^2\phi + \int_{\Omega}\sum_{\alpha=1}^Q\sum_{p=1}^k v^\alpha_p Dv^\alpha_p\cdot D\phi = 0\ \ \ \ \text{for any }\phi\in C^\infty_c(\Omega;\R);
\end{equation}
note that (\ref{squeeze}) is exactly \cite[(3.3)]{de2010almgren}, whilst (\ref{squash}) follows from \cite[(3.5)]{de2010almgren} by taking $\psi(x,u) = \phi(x)u$, for $\phi\in C^\infty_c(\Omega;\R)$.

If in Definition \ref{def:blow-up}, instead of $(\FB2)$ one assumes that each $v^i$ is Dirichlet-stationary and $GC^1$ in $H\cap B_1$ and that any $v$ which is homogeneous of degree one and Dirichlet-stationary on all of $\R^k\setminus\{0\}$ with $v^i$ having no points of density $q_i$ (i.e. where all values of $v^i$ coincide) except at $0$ is necessarily a union of $q_i$ linear functions, then one may establish the same classification of homogeneous degree one blow-ups as in Lemma \ref{lemma:classification}. Thus, if one instead assumes in $(\FB2)$ that each $v^i$ is $GC^{1,\alpha}$ in $H\cap B_1$ for some $\alpha>0$, then one can establish a variant of Theorem \ref{thm:boundary-reg} in this setting; we remark that such a regularity assumption in the interior would come from, for example, a frequency lower bound at the branch set. The upshot is, after establishing a suitable regularity theorem near hyperplanes of certain multiplicities (say, $GC^{1,\alpha}$ for some fixed $\alpha>0$ independent of the blow-up) such that the blow-ups obey both the squash and squeeze inequality in the interior (i.e. are essentially Dirichlet-stationary in the above sense), proving a regularity result for such varifolds near classical cones with half-hyperplanes having the same range of multiplicities essentially just relies on knowing that density gaps do not occur if one follows the general technique outlined in the current work, which is based on \cite{minter-5-2}, \cite{minter-campanato}, and \cite{minterwick}.

\bibliographystyle{alpha} 
\bibliography{references}

\end{document}